\documentclass[12pt]{amsart}

\usepackage[hmargin=0.8in,height=8.6in]{geometry}
\usepackage{amssymb,amsthm, times}
\usepackage{delarray,verbatim}
\usepackage{ifpdf}
\ifpdf
\usepackage[pdftex]{graphicx}
\DeclareGraphicsRule{*}{mps}{*}{} \else
\usepackage[dvips]{graphicx}
\DeclareGraphicsRule{*}{eps}{*}{} \fi

\usepackage{bm}

\linespread{1.20}

\usepackage{ifpdf}
\usepackage{color}
\definecolor{webgreen}{rgb}{0,.5,0}
\definecolor{webbrown}{rgb}{.8,0,0}
\definecolor{emphcolor}{rgb}{0.95,0.95,0.95}

\usepackage{hyperref}
\hypersetup{%
          colorlinks=true,
          linkcolor=webbrown,
          filecolor=webbrown,
          citecolor=webgreen,
          breaklinks=true}
\ifpdf \hypersetup{pdftex,
            pdfstartview=FitH, 
            bookmarksopen=true,
            bookmarksnumbered=true
} \else \hypersetup{dvips} \fi

\linespread{1.2}

\newcommand{\lapinv}{\Phi(q)}

\numberwithin{equation}{section}

\newtheorem{theorem}{Theorem}[section]
\newtheorem{proposition}{Proposition}[section]
\newtheorem{corollary}{Corollary}[section]
\newtheorem{remark}{Remark}[section]
\newtheorem{lemma}{Lemma}[section]

\newtheorem{assump}{Assumption}[section]

\numberwithin{remark}{section} \numberwithin{proposition}{section}
\numberwithin{corollary}{section}
\newcommand {\R}{\mathbb{R}}

\newcommand {\p}{\mathbb{P}}

\newcommand {\E}{\mathbb{E}}

\newcommand{\diff}{{\rm d}}

\newcommand{\lev}{L\'{e}vy }

\title[Optimal dividends in the dual model under transaction costs]{Optimal dividends in the dual model under transaction costs}
\thanks{This version: \today. }
\author[E. Bayraktar]{Erhan Bayraktar$^*$}
\thanks{$*$\,Department of Mathematics,
University of Michigan,
530 Church Street,
Ann Arbor, MI  48109-1043, USA. Email: erhan@umich.edu}
\author[A. E. Kyprianou]{Andreas E. Kyprianou$^\dag$  }
\thanks{$\dag$\,Department of Mathematical Sciences, 
The University of Bath, 
Claverton Down, 
Bath BA2 7AY, 
UK. Email: a.kyprianou@bath.ac.uk}
\author[K. Yamazaki]{Kazutoshi Yamazaki$\ddag$}
\thanks{$\ddag$\, (corresponding author) Department of Mathematics,
Faculty of Engineering Science, Kansai University, 3-3-35 Yamate-cho, Suita-shi, Osaka 564-8680, Japan. Email: \mbox{{\em
kyamazak@kansai-u.ac.jp}}.  Tel: +81-6-6368-1527. }
\date{}
\begin{document}

\begin{abstract}
We analyze the optimal dividend payment problem in the dual model under constant transaction costs. 
We show, for a general spectrally positive \lev process, an optimal strategy is given by a $(c_1,c_2)$-policy that brings the surplus process down to $c_1$ whenever it reaches or exceeds $c_2$ for some $0 \leq c_1 < c_2$.  The value function is succinctly expressed in terms of the scale function.  A series of numerical examples are provided to confirm the analytical results and to demonstrate the convergence to the no-transaction cost case, which was recently solved by Bayraktar et al.\ \cite{Bayraktar_2012}.
\\
\noindent \small{\textbf{Key words:} dual model; dividends; impulse control;
 spectrally positive \lev processes; scale functions.\\
\noindent JEL Classification: C44, C61, G24, G32, G35 \\
\noindent  AMS 2010 Subject Classifications: 60G51, 93E20}\\
\end{abstract}

\maketitle

\section{Introduction}

We solve the optimal dividend problem under fixed transaction costs in the so-called \emph{dual model}, in which the surplus of a company is driven by a \lev process with positive jumps  (\emph{spectrally positive L\'{e}vy} process).  This is an appropriate model for a company driven by inventions or discoveries.  The case without transaction costs has recently been well-studied; see \cite{MR2324568}, \cite{MR2372996}, \cite{Avanzi_2008}, and \cite{Avanzi_2011}.  In particular, in \cite{Bayraktar_2012}, we show the optimality of a barrier strategy  (reflected \lev process) for a general spectrally positive L\'{e}vy process of bounded or unbounded variation.  

A strategy is assumed to be in the form of impulse control; whenever dividends are accrued, a constant transaction cost $\beta > 0$ is incurred.   As opposed to the barrier strategy that is typically optimal for the no-transaction cost case, we shall pursue the optimality of the so-called $(c_1,c_2)$-policy that brings the surplus process down to $c_1$ whenever it reaches or exceeds $c_2$ for some $0 \leq c_1 < c_2 < \infty$.  While, as in \cite{Loeffen_2009_2, Thonhauser_2011}, an optimal strategy may not lie in the set of $(c_1,c_2)$-policies for the spectrally negative \lev case, we shall show that  it is indeed so in the dual model for any choice of underlying spectrally positive \lev process.  As a related work, we refer the reader to a compound Poisson dual model by \cite{Yao_2011} where transaction costs are incurred for capital injections.  In inventory control, the optimality of similar policies, called $(s,S)$-policies,  is shown to be optimal in \cite{Bensoussan_2009,Bensoussan_2005} for a mixture of a Brownian motion and a  compound Poisson process and in \cite{Yamazaki_2013} for a general spectrally negative \lev process.

Following \cite{Bayraktar_2012}, we take advantage of the fluctuation theory for the spectrally positive \lev process (see e.g.\ \cite{Bertoin_1996}, \cite{Doney_2007} and \cite{Kyprianou_2006}).  The expected net present value (NPV) of dividends (minus transaction costs) under a $(c_1,c_2)$-policy until ruin is first written in terms of the scale function.  We then show the existence of the maximizers $0 \leq c_1^* < c_2^* < \infty$ that satisfy the continuous fit (resp.\ smooth fit) at $c_2^*$ when the surplus process is of bounded (resp.\ unbounded) variation and that the derivative at $c_1^*$ is one when $c_1^* > 0$ and is less than or equal to one when $c_1^* = 0$.  These properties are used to verify the optimality of the $(c_1^*,c_2^*)$-policy. 

In order to evaluate the analytical results and to examine the connection with the no-transaction cost case developed by \cite{Bayraktar_2012}, we conduct a series of numerical experiments using \lev processes with positive i.i.d.\ phase-type jumps with or without Brownian motion \cite{Asmussen_2004}.  We shall confirm the existence of the  maximizers $0 \leq c_1^* < c_2^* < \infty$ and examine the shape of the value function at $c_1^*$ and $c_2^*$.  We further compute for a sequence of  unit transaction costs and confirm that,  as $\beta \downarrow 0$,  the value function as well as $c_1^*$ and $c_2^*$  converge to the ones obtained for the no-transaction cost case in \cite{Bayraktar_2012}.

The rest of the paper is organized as follows.  Section \ref{section_model} gives a mathematical model of the problem.  In Section  \ref{section_c_1_c_2},  we compute the expected NPV of dividends under the $(c_1,c_2)$-policy via the scale function.  Section \ref{section_candidate} shows the existence of $0 \leq c_1^* < c_2^* < \infty$ that maximize the expected NPV over $c_1$ and $c_2$.  Section \ref{section_verification} verifies the optimality of the $(c_1^*,c_2^*)$-policy.  We conclude the paper with numerical results in Section \ref{section_numerics}.

\section{Mathematical Formulation} \label{section_model}
We will denote the surplus of a company by a spectrally positive \lev process $X = \left\{X_t; t \geq 0 \right\}$ whose \emph{Laplace exponent} is given by
\begin{align}
\psi(s)  := \log \E \left[ e^{-s X_1} \right] =  c s +\frac{1}{2}\sigma^2 s^2 + \int_{(0,\infty)} (e^{-s z}-1 + s z 1_{\{0 < z < 1\}}) \nu (\diff z), \quad s \in \mathbb{R} \label{laplace_spectrally_positive}
\end{align}
where $\nu$ is a \lev measure with the support $(0,\infty)$ that satisfies the integrability condition $\int_{(0,\infty)} (1 \wedge z^2) \nu(\diff z) < \infty$.  It has paths of bounded variation if and only if $\sigma = 0$ and $\int_{( 0,1)}z\, \nu(\diff z) < \infty$. In this case, we write \eqref{laplace_spectrally_positive} as
\begin{align*}
\psi(s)   =  \mathfrak{d} s +\int_{(0,\infty)} (e^{-s z}-1 ) \nu (\diff z), \quad s \in \mathbb{R} 
\end{align*}
with $\mathfrak{d} := c + \int_{( 0,1)}z\, \nu(\diff z)$; the resulting drift of the process is $-\mathfrak{d}$. We exclude the trivial case in which $X$ is a subordinator (i.e., $X$ has monotone paths a.s.). This assumption implies that $\mathfrak{d} > 0$ when $X$ is of bounded variation. 

Let $\mathbb{P}_x$ be the conditional probability under which $X_0 = x$ (also let $\mathbb{P} \equiv \mathbb{P}_0$), and let $\mathbb{F} := \left\{ \mathcal{F}_t: t \geq 0 \right\}$ be the filtration generated by $X$.  Using this, the drift of $X$ is given by
\begin{align}
\mu := \E [X_1]  = - \psi'(0+). \label{drift}
\end{align}
In order to make sure the problem is non-trivial and well-defined, we assume throughout the paper that this is finite.
\begin{assump}  \label{assump_finiteness_mu}We assume that $\mu \in (-\infty, \infty)$.
\end{assump}

A (dividend) \emph{strategy} $\pi := \left\{ L_t^{\pi}; t \geq 0 \right\}$ is given by a nondecreasing, right-continuous and $\mathbb{F}$-adapted \emph{pure jump} process starting at zero in the form $L_t^\pi = \sum_{0 \leq s \leq t} \Delta L_s^\pi$ with $\Delta L_t = L_t - L_{t-}$, $t \geq 0$. Corresponding to every strategy $\pi$, we associate a \emph{controlled surplus} process $U^\pi = \{U_t^\pi: t \geq 0 \}$, which is defined by
\begin{align*}
U_t^\pi := X_t - L_t^\pi, \quad t \geq 0,
\end{align*}
where $U^\pi_{0-} = x$ is the initial surplus and $L^\pi_{0-} = 0$.
The time of  ruin is defined to be
\begin{align*}
\sigma^\pi := \inf \left\{ t > 0: U_t^\pi < 0 \right\}.
\end{align*}
A lump-sum payment cannot be more than the available funds and hence it is required that
\begin{align}
\Delta L_t^\pi \leq U_{t-}^\pi + \Delta X_t,  \quad t \leq \sigma^\pi \; \; a.s.  \label{admissibility}
\end{align}
Let $\Pi$ be the set of all admissible strategies satisfying (\ref{admissibility}).
The problem is to compute, for $q > 0$,   the expected NPV of dividends until ruin
\begin{align*}
v_\pi (x) := \E_x \Big[ \int_0^{\sigma^\pi} e^{-qt} \diff \Big( L_t^\pi - \sum_{0 \leq s \leq t} \beta 1_{\{ \Delta L_s^\pi > 0 \}}\Big)\Big], \quad x \geq 0,
\end{align*}
where $\beta > 0$ is the unit transaction cost, and to obtain an admissible strategy that maximizes it, if such a strategy exists.  Hence the (optimal) value function is written as
\begin{equation}\label{eq:classical-p}
  v(x):=\sup_{\pi\in \Pi}v_\pi(x), \quad x \geq 0.
\end{equation}

\section{The $(c_1,c_2)$-policy} \label{section_c_1_c_2}

We aim to prove that a $(c_1^*,c_2^*)$-policy is optimal for some $c^*_2 > c_1^* \geq 0$.  For  $c_2 > c_1 \geq 0$, a $(c_1,c_2)$-policy, $\pi_{c_1,c_2} := \left\{ L_t^{c_1,c_2}; t \geq 0 \right\}$, brings the level of the controlled surplus process $U^{c_1,c_2} := X - L^{c_1,c_2}$ down to $c_1$ whenever it reaches or exceeds $c_2$.  Let us define the corresponding expected NPV of dividends as
\begin{align}
v_{c_1,c_2} (x) := \E_x \left[ \int_0^{\sigma_{c_1,c_2}} e^{-qt} \diff \Big( L_t^{c_1,c_2} - \sum_{0 \leq s \leq t} \beta 1_{\{ \Delta L_s^{c_1,c_2} > 0 \}}\Big)\right], \quad x \geq 0, \label{def_v_c}
\end{align}
where $\sigma_{c_1,c_2} := \inf \left\{ t > 0: U_t^{c_1,c_2} < 0 \right\}$ is the corresponding ruin time.  In this section, we shall express these in terms of the scale function.

\subsection{Scale functions}
Fix $q > 0$. For any spectrally positive \lev process, there exists a function called  the  \emph{q-scale function} 
\begin{align*}
W^{(q)}: \R \rightarrow [0,\infty), 
\end{align*}
which is zero on $(-\infty,0)$, continuous and strictly increasing on $[0,\infty)$, and is characterized by the Laplace transform:
\begin{align*}
\int_0^\infty e^{-s x} W^{(q)}(x) \diff x = \frac 1
{\psi(s)-q}, \qquad s > \lapinv,
\end{align*}
where
\begin{equation}
\lapinv :=\sup\{\lambda \geq 0: \psi(\lambda)=q\}. \notag
\end{equation}
Here, the Laplace exponent $\psi$ in \eqref{laplace_spectrally_positive} is known to be zero at the origin and convex on $[0,\infty)$; therefore $\lapinv$ is well-defined and is strictly positive as $q > 0$.   We also define, for $x \in \R$,
\begin{align*}
\overline{W}^{(q)}(x) &:=  \int_0^x W^{(q)}(y) \diff y, \\
Z^{(q)}(x) &:= 1 + q \overline{W}^{(q)}(x),  \\
\overline{Z}^{(q)}(x) &:= \int_0^x Z^{(q)} (z) \diff z = x + q \int_0^x \int_0^z W^{(q)} (w) \diff w \diff z.
\end{align*}
Notice that because $W^{(q)}$ is uniformly zero on the negative half line, we have
\begin{align}
Z^{(q)}(x) = 1  \quad \textrm{and} \quad \overline{Z}^{(q)}(x) = x, \quad x \leq 0.  \label{z_below_zero}
\end{align}

Let us define the \emph{first down-} and \emph{up-crossing times}, respectively, by
\begin{align}
\label{first_passage_time}
\tau_a^- := \inf \left\{ t \geq 0: X_t < a \right\} \quad \textrm{and} \quad \tau_b^+ := \inf \left\{ t \geq 0: X_t >  b \right\}, \quad a,b \in \R.
\end{align}
Then we have for any $b > 0$
\begin{align}
\begin{split}
\E_x \left[ e^{-q \tau_0^-} 1_{\left\{ \tau_b^+ > \tau_0^- \right\}}\right] &= \frac {W^{(q)}(b-x)}  {W^{(q)}(b)}, \\
 \E_x \left[ e^{-q \tau_b^+} 1_{\left\{ \tau_b^+ < \tau_0^- \right\}}\right] &= Z^{(q)}(b-x) -  Z^{(q)}(b) \frac {W^{(q)}(b-x)}  {W^{(q)}(b)}.
\end{split}
 \label{laplace_in_terms_of_z}
\end{align}
Notice for the case of spectrally negative \lev process starting at $x$, analogous results hold by replacing $b-x$ with $x$.

Fix $a \geq 0$ and define $\psi_a(\cdot)$ as the Laplace exponent of $X$ under $\p^a$ with the change of measure 
\begin{align}
\left. \frac {\diff \p^a} {\diff \p}\right|_{\mathcal{F}_t} = \exp(a X_t - \psi(a) t), \quad t \geq 0; \label{change_of_measure}
\end{align}
see page 213 of \cite{Kyprianou_2006}. It is given for all $s > -a$ by
\begin{align*}
\psi_a(s) :=\Big(  a \sigma^2  + c - \int_0^1 u (e^{-au}-1) \nu(\diff u)\Big) s
+\frac{1}{2}\sigma^2 s^2 +\int_0^\infty (e^{- s u}-1 + s u 1_{\{ 0 < u < 1 \}}) e^{-a u}\,\nu(\diff u).
\end{align*}
 If $W_a^{(q)}$ and $Z_a^{(q)}$ are the scale functions associated with $X$ under $\p^a$ (or equivalently with $\psi_a(\cdot)$).  
Then, by Lemma 8.4 of \cite{Kyprianou_2006},
\begin{align}
W_a^{(q-\psi(a))}(x) = e^{-a x} W^{(q)}(x), \quad x \in \R, \label{scale_measure_change}
\end{align}
which is well-defined even for $q \leq \psi(a)$ by Lemmas 8.3 and 8.5 of \cite{Kyprianou_2006}.

\begin{remark} \label{remark_smoothness_zero}
\begin{enumerate}
\item If $X$ is of unbounded variation, it is known that $W^{(q)}$ is $C^1(0,\infty)$; see, e.g.,\ Chan et al.\ \cite{Chan_2009}.  Hence, 
\begin{enumerate}
\item $Z^{(q)}$ is $C^1(0,\infty)$ and $C^0 (\R)$ for the bounded variation case, while it is $C^2(0,\infty)$ and $C^1 (\R)$ for the unbounded variation case, and
\item $\overline{Z}^{(q)}$ is $C^2(0,\infty)$ and $C^1 (\R)$ for the bounded variation case, while it is $C^3(0,\infty)$ and $C^2 (\R)$ for the unbounded variation case.
\end{enumerate}
\item Regarding the asymptotic behavior near zero, we have that
\begin{equation}\label{eq:Wq0}
W^{(q)} (0) = \left\{ \begin{array}{ll} 0, & \textrm{if $X$ is of unbounded
variation,} \\ \frac 1 {\mathfrak{d}}, & \textrm{if $X$ is of bounded variation,}
\end{array} \right.
\end{equation}
and 
\begin{equation}\label{eq:Wqp0}
W^{(q)'} (0+) := \lim_{x \downarrow 0}W^{(q)'} (x) =
\left\{ \begin{array}{ll}  \frac 2 {\sigma^2}, & \textrm{if }\sigma > 0, \\
\infty, & \textrm{if }\sigma = 0 \; \textrm{and} \; \nu(0,\infty) = \infty, \\
\frac {q + \nu(0,\infty)} {\mathfrak{d}^2}, & \textrm{if $X$ is compound Poisson.}
\end{array} \right.
\end{equation}
\item As in (8.18) and Lemma 8.2 of \cite{Kyprianou_2006},
\begin{align*}
 \frac {W^{(q)'}(y)} {W^{(q)}(y)} \leq \frac {W^{(q)'}(x)} {W^{(q)}(x)},  \quad  y > x > 0,
\end{align*}
where $W^{(q)'}$ is understood as the right-derivative if it is not differentiable.  In all cases, $W^{(q)'}(x-) \geq W^{(q)'}(x+)$ for all $x > 0$.
\end{enumerate}
\end{remark}

\subsection{The expected NPV of dividends for the $(c_1,c_2)$-policy}
Now we obtain \eqref{def_v_c} using the scale function.
By the strong Markov property,
it must satisfy, for every $ 0 \leq x < c_2$ and $0 \leq c_1 < c_2$,
\begin{align}
v_{c_1,c_2} (x) = \E_x \left[ e^{-q \tau_{c_2}^+} 1_{\{ \tau_{c_2}^+ < \tau_0^-\}} (X_{\tau_{c_2}^+} - c_1 - \beta)\right] + \E_x \left[ e^{-q \tau_{c_2}^+} 1_{\{ \tau_{c_2}^+ < \tau_0^-\}}\right] \bar{v}_{c_1,c_2},   \label{def_v_c_exp}
\end{align}
where $\bar{v}_{c_1,c_2} := v_{c_1,c_2} (c_1)$.
Solving for $x = c_1$, we have
\begin{align}
\bar{v}_{c_1,c_2}= \frac {\E_{c_1} \left[ e^{-q \tau_{c_2}^+} 1_{\{ \tau_{c_2}^+ < \tau_0^-\}} (X_{\tau_{c_2}^+} - c_1 - \beta)\right]} {1- \E_{c_1} \left[ e^{-q \tau_{c_2}^+} 1_{\{ \tau_{c_2}^+ < \tau_0^-\}} \right]}, \quad 0 \leq c_1 < c_2. \label{v_bar_def}
\end{align}

 The Laplace transform $\E_x \big[ e^{-q \tau_{c}^+ - v X_{\tau_{c}^+}} 1_{\{ \tau_{c}^+ < \tau_0^-\}} \big]$, $q,v > 0$, was computed in Corollary 3 of \cite{Ivanovs_Palmowski}.  The following result is the derivative of this transform at $v = 0$.
\begin{lemma} \label{lemma_h}
For $0 \leq x < c$,
\begin{align*}
\E_x \left[ e^{-q \tau_{c}^+} 1_{\{ \tau_{c}^+ < \tau_0^-\}} X_{\tau_{c}^+}\right]
&= -R^{(q)}(c-x)  + \left( c - \frac {\mu} q \right) Z^{(q)}(c-x) \\ &-  \left[ \left(c -\frac {\mu} q  \right) Z^{(q)}(c) - R^{(q)}(c) \right] \frac {W^{(q)}(c-x)} {W^{(q)}(c)},
\end{align*}
where
\begin{align*}
R^{(q)}(y) := \overline{Z}^{(q)}(y) - \frac {\mu} q, \quad y \in \R.
\end{align*}
\end{lemma}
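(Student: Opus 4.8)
The plan is to obtain the stated identity by differentiating, at $v=0$, the Laplace transform
\[
H_v(x):=\E_x\big[e^{-q\tau_c^+-vX_{\tau_c^+}}1_{\{\tau_c^+<\tau_0^-\}}\big],\qquad v\ge 0,
\]
which is available in closed form from Corollary 3 of \cite{Ivanovs_Palmowski}. Since $X_{\tau_c^+}\ge c>0$ on $\{\tau_c^+<\tau_0^-\}$ and, by Assumption \ref{assump_finiteness_mu}, the mean overshoot is finite, dominated convergence permits differentiation under the expectation, giving $\partial_v H_v(x)=-\E_x[e^{-q\tau_c^+}X_{\tau_c^+}e^{-vX_{\tau_c^+}}1_{\{\tau_c^+<\tau_0^-\}}]$; hence the quantity in the Lemma is precisely $-\partial_v H_v(x)\big|_{v=0}$. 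It therefore suffices to write $H_v$ in a form that is manifestly differentiable in $v$ and to carry out the differentiation at the origin.

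To obtain such a form I would recast $H_v$ via the Esscher transform \eqref{change_of_measure}. Writing $\gamma_v:=q-\psi(v)$ and applying optional stopping to the density martingale at $\tau_c^+$, the factor $e^{-vX_{\tau_c^+}}$ is absorbed and one is left with $H_v(x)=e^{-vx}\,\E^{v}_x[e^{-\gamma_v\tau_c^+}1_{\{\tau_c^+<\tau_0^-\}}]$, where the prefactor $e^{-vx}$ arises from normalising the density at the starting point $x$. Under $\p^v$ the process is again spectrally positive, so the second identity in \eqref{laplace_in_terms_of_z} applies with discount $\gamma_v$ and scale functions $W_v^{(\gamma_v)},Z_v^{(\gamma_v)}$; invoking the tilting relation \eqref{scale_measure_change}, namely $W_v^{(\gamma_v)}(y)=e^{-vy}W^{(q)}(y)$, the measure-changed $W$'s reduce back to $W^{(q)}$ and one reaches
\[
H_v(x)=e^{-vx}Z_v^{(\gamma_v)}(c-x)-Z_v^{(\gamma_v)}(c)\,\frac{W^{(q)}(c-x)}{W^{(q)}(c)},\qquad Z_v^{(\gamma_v)}(y)=1+\gamma_v\!\int_0^y e^{-vw}W^{(q)}(w)\,\diff w .
\]
At $v=0$ this collapses to the known expression $Z^{(q)}(c-x)-Z^{(q)}(c)W^{(q)}(c-x)/W^{(q)}(c)$, a useful consistency check.

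The remaining work is the differentiation $-\partial_v H_v(x)|_{v=0}$, in which two $v$-dependencies must be tracked: the explicit tilts $e^{-vx},e^{-vw}$ and the spectral shift $\gamma_v$. Since $\gamma_0=q$ and, by \eqref{drift}, $\gamma_0'=-\psi'(0+)=\mu$, differentiating $Z_v^{(\gamma_v)}$ gives $\partial_v Z_v^{(\gamma_v)}(y)|_{v=0}=\mu\,\overline{W}^{(q)}(y)-q\int_0^y w\,W^{(q)}(w)\,\diff w$. I would then use integration by parts together with $\overline{Z}^{(q)}(y)=y+q\int_0^y\overline{W}^{(q)}(z)\,\diff z$ to rewrite $\int_0^y w\,W^{(q)}(w)\,\diff w=y\,\overline{W}^{(q)}(y)-(\overline{Z}^{(q)}(y)-y)/q$; substituting $\overline{W}^{(q)}=(Z^{(q)}-1)/q$ and recognising $R^{(q)}(y)=\overline{Z}^{(q)}(y)-\mu/q$ yields the compact form $\partial_v Z_v^{(\gamma_v)}(y)|_{v=0}=R^{(q)}(y)-(y-\mu/q)Z^{(q)}(y)$. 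Feeding this, together with $\partial_v e^{-vx}|_{v=0}=-x$, into $-\partial_v H_v$ and collecting terms reproduces the claim: the stray $xZ^{(q)}(c-x)$ coming from the prefactor combines with $(c-x-\mu/q)Z^{(q)}(c-x)$ to give exactly $(c-\mu/q)Z^{(q)}(c-x)$, while the bracket multiplying $W^{(q)}(c-x)/W^{(q)}(c)$ assembles into $R^{(q)}(c)-(c-\mu/q)Z^{(q)}(c)$.

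The main obstacle is bookkeeping rather than any conceptual difficulty: one must keep the derivative of the internal parameter $\gamma_v$ separate from that of the exponential tilts, and the cancellation that turns the various $\overline{W}^{(q)}$, $\int w\,W^{(q)}$ and $x$-terms into the single coefficient $c-\mu/q$ (and into $R^{(q)}$) is delicate. In particular, getting the $e^{-vx}$ prefactor right is essential, since omitting it shifts the coefficient of $Z^{(q)}(c-x)$ by exactly $x$ and breaks the identity. A secondary point to pin down is the interchange of $\partial_v$ and $\E_x$ at $v=0$, which is where Assumption \ref{assump_finiteness_mu}, i.e.\ finiteness of $\mu$ and hence of the mean overshoot, enters.
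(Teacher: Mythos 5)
Your proof is correct and takes essentially the same approach as the paper: the paper obtains the lemma precisely as the derivative at $v=0$ of the two-sided exit Laplace transform from Corollary 3 of \cite{Ivanovs_Palmowski}, which is exactly your $-\partial_v H_v(x)\big|_{v=0}$. The only difference is that you also re-derive that transform via the Esscher transform \eqref{change_of_measure} and the tilting relation \eqref{scale_measure_change} and carry out the differentiation and bookkeeping explicitly (all of which checks out, including the identity $\partial_v Z_v^{(\gamma_v)}(y)\big|_{v=0}=R^{(q)}(y)-(y-\mu/q)Z^{(q)}(y)$), whereas the paper delegates these details to the cited reference.
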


By this lemma, \eqref{laplace_in_terms_of_z} and \eqref{v_bar_def}, 
 we can write
\begin{align}
\bar{v}_{c_1,c_2} = \frac {f(c_1, c_2) } {g(c_1, c_2)}, \quad 0 \leq c_1 < c_2, \label{v_as_f_g}
\end{align}
where
\begin{align} \label{f_c}
\begin{split}
f(c_1, c_2)&:=   -R^{(q)}(c_2-c_1)  + \left( c_2 -\frac {\mu} q \right) Z^{(q)}(c_2-c_1)   \\ &-  \left[ \left( c_2 -  \frac {\mu} q \right) Z^{(q)}(c_2) - R^{(q)}(c_2) \right] \frac {W^{(q)}(c_2-c_1)} {W^{(q)}(c_2)} \\
&- (c_1 + \beta) \left[ Z^{(q)}(c_2-c_1) - Z^{(q)}(c_2) \frac {W^{(q)}(c_2-c_1)}  {W^{(q)}(c_2)} \right] \\
&=   -R^{(q)}(c_2-c_1)  + \left( c_2 - c_1-  \beta - \frac {\mu} q \right) Z^{(q)}(c_2-c_1)   \\ &-  \left[ \left(  c_2 - c_1 - \beta -\frac {\mu} q \right) Z^{(q)}(c_2) - R^{(q)}(c_2) \right] \frac {W^{(q)}(c_2-c_1)} {W^{(q)}(c_2)}
\end{split}
\end{align}
and
\begin{align} \label{g_c}
g(c_1, c_2) &:= 1- Z^{(q)}(c_2-c_1) + Z^{(q)}(c_2) \frac {W^{(q)}(c_2-c_1)}  {W^{(q)}(c_2)}.
\end{align}

\section{Candidate strategies} \label{section_candidate}
Using the results in the previous section, we now have an analytical expression for \eqref{def_v_c} or equivalently \eqref{def_v_c_exp}.  For $0 \leq x < c_2$ and $0 \leq c_1 < c_2$, this expression reduces to
\begin{align} \label{v_less_than_c_2}
\begin{split}
v_{c_1,c_2} (x) 
 &= -R^{(q)}(c_2-x)  + \left( c_2 - \frac {\mu} q \right) Z^{(q)}(c_2-x) \\ &-  \left[ \left(  c_2 -\frac {\mu} q \right) Z^{(q)}(c_2) - R^{(q)}(c_2) \right] \frac {W^{(q)}(c_2-x)} {W^{(q)}(c_2)} \\ &+ (\bar{v}_{c_1,c_2} - c_1 -\beta) \left[Z^{(q)}(c_2-x) - Z^{(q)}(c_2) \frac {W^{(q)}(c_2-x)}  {W^{(q)}(c_2)} \right] \\
&= -R^{(q)}(c_2-x)  + \gamma(c_1,c_2)  Z^{(q)}(c_2-x)  - G(c_1,c_2) \frac {W^{(q)}(c_2-x)} {W^{(q)}(c_2)},
\end{split}
\end{align}
where
\begin{align} \label{definition_gamma_G}
\begin{split}
\gamma(c_1,c_2) &:= \bar{v}_{c_1,c_2}  + c_2 - c_1 -\beta  -\frac {\mu} q, \\
G(c_1,c_2) &:= \gamma(c_1,c_2) Z^{(q)}(c_2) - R^{(q)}(c_2).
\end{split}
\end{align}
For $x \geq c_2$, we have
\begin{align}
v_{c_1,c_2} (x) = x - c_1 - \beta + \bar{v}_{c_1,c_2}. \label{v_above_c_2}
\end{align}

In view of \eqref{v_above_c_2}, a necessary condition for a $(c_1,c_2)$-policy to be optimal is that $c_1$ and $c_2$ maximize $\bar{v}_{c_1,c_2} - c_1$.  In this section, we first obtain the first-order conditions by computing its partial derivatives with respect to $c_1$ and $c_2$ and then show the existence of finite-valued maximizers. 
%
In the rest of the paper, the derivative is understood as the right-derivative when the scale function $W^{(q)}$ fails to be differentiable on $(0,\infty)$.

\subsection{First-order conditions}

\begin{lemma} \label{lemma_derivative_v_c_2}
For every $0 \leq c_1 < c_2$, 
\begin{align*}
\frac \partial {\partial c_2}(\bar{v}_{c_1,c_2} -c_1) = \frac \partial {\partial c_2}\bar{v}_{c_1,c_2}
= -\frac {G(c_1,c_2)} {g(c_1,c_2)}\frac  \partial {\partial c_2}\frac {W^{(q)}(c_2-c_1)}  {W^{(q)}(c_2)}.
\end{align*}
\end{lemma}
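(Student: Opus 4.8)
The first equality is immediate, since $c_1$ does not depend on $c_2$, so I focus on the second. The plan is to avoid the quotient rule on $\bar v_{c_1,c_2}=f/g$ from \eqref{v_as_f_g} and instead exploit the self-referential representation obtained by evaluating \eqref{v_less_than_c_2} at $x=c_1$, namely
\[
\bar v_{c_1,c_2} = -R^{(q)}(c_2-c_1) + \gamma(c_1,c_2) Z^{(q)}(c_2-c_1) - G(c_1,c_2)\frac{W^{(q)}(c_2-c_1)}{W^{(q)}(c_2)},
\]
which holds because $\gamma$ and $G$ in \eqref{definition_gamma_G} already carry $\bar v_{c_1,c_2}$ inside them. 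I would record the elementary derivative identities $Z^{(q)\prime}=qW^{(q)}$, $\overline{Z}^{(q)\prime}=Z^{(q)}$, and hence $R^{(q)\prime}=Z^{(q)}$, together with $\partial_{c_2}(c_2-c_1)=1$; for the $W^{(q)}$ factor the right-derivative convention is used where it is not differentiable, while $R^{(q)}$ and $Z^{(q)}$ are genuinely differentiable by Remark \ref{remark_smoothness_zero}.

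Next I would differentiate the displayed identity in $c_2$, treating $\bar v_{c_1,c_2}$ --- and therefore $\gamma$ and $G$ --- as functions of $c_2$. Writing $v':=\partial_{c_2}\bar v_{c_1,c_2}$, the definitions in \eqref{definition_gamma_G} give $\partial_{c_2}\gamma = v'+1$ and $\partial_{c_2}G=(v'+1)Z^{(q)}(c_2)+\gamma q W^{(q)}(c_2)-Z^{(q)}(c_2)$. The key simplification I expect is that the term $\gamma q W^{(q)}(c_2-c_1)$ produced by differentiating $\gamma Z^{(q)}(c_2-c_1)$ cancels exactly against the term $\gamma q W^{(q)}(c_2)\cdot W^{(q)}(c_2-c_1)/W^{(q)}(c_2)$ coming from $\partial_{c_2}G$. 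After this cancellation the remaining $v'$-terms collect into the factor $Z^{(q)}(c_2-c_1)-Z^{(q)}(c_2)W^{(q)}(c_2-c_1)/W^{(q)}(c_2)$, which by \eqref{g_c} equals $1-g(c_1,c_2)$.

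What remains is the scalar linear equation $v'=v'(1-g(c_1,c_2))-G(c_1,c_2)\,\partial_{c_2}[W^{(q)}(c_2-c_1)/W^{(q)}(c_2)]$, and solving it gives $v'\,g(c_1,c_2)=-G(c_1,c_2)\,\partial_{c_2}[W^{(q)}(c_2-c_1)/W^{(q)}(c_2)]$, i.e.\ the claimed formula (here $g(c_1,c_2)\neq 0$ since $\bar v_{c_1,c_2}$ is finite). The only real obstacle is bookkeeping: because $\bar v_{c_1,c_2}$ enters implicitly through both $\gamma$ and $G$, one must track every occurrence of $v'$ and verify the cancellation of the $\gamma q W^{(q)}$ terms. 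As a safeguard, a direct differentiation of the quotient $f/g$ using \eqref{f_c}--\eqref{g_c} and the same derivative identities provides an independent cross-check.
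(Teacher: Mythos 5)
Your proof is correct and is essentially the same computation as the paper's, just packaged differently: once you collect the $\bar v_{c_1,c_2}$-terms hidden inside $\gamma$ and $G$, the self-referential identity you differentiate is precisely the statement $\bar v_{c_1,c_2}\, g(c_1,c_2) = f(c_1,c_2)$, so your implicit differentiation and the linear equation you solve for $v'$ reproduce term by term the paper's step $g(c_1,c_2)\,\partial_{c_2}\bar v_{c_1,c_2} = \partial_{c_2}f(c_1,c_2) - \bar v_{c_1,c_2}\,\partial_{c_2}g(c_1,c_2)$, including the cancellation of the $qW^{(q)}$-terms. The paper simply computes $\partial_{c_2}f$ and $\partial_{c_2}g$ separately from \eqref{f_c} and \eqref{g_c} and then combines them via the quotient rule, which is exactly the cross-check you propose at the end.
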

\begin{proof}
Differentiating  \eqref{f_c}, we obtain
\begin{align*}
\frac \partial {\partial c_2}f(c_1, c_2)
&=   -Z^{(q)}(c_2-c_1)  + Z^{(q)}(c_2-c_1) + \left( c_2 - c_1-  \beta - \frac {\mu} q \right) q W^{(q)}(c_2-c_1)   \\ &-  \left[ \left(  c_2 - c_1 - \beta - \frac {\mu} q\right) q W^{(q)}(c_2) + Z^{(q)}(c_2) -Z^{(q)}(c_2) \right] \frac {W^{(q)}(c_2-c_1)} {W^{(q)}(c_2)} \\
 &-  \left[ \left( c_2 - c_1 - \beta -\frac {\mu} q \right) Z^{(q)}(c_2) - R^{(q)}(c_2) \right] \frac \partial {\partial c_2}\frac {W^{(q)}(c_2-c_1)}  {W^{(q)}(c_2)} \\
&=   -  \left[ \left(  c_2 - c_1 - \beta -\frac {\mu} q \right) Z^{(q)}(c_2) - R^{(q)}(c_2) \right] \frac \partial {\partial c_2}\frac {W^{(q)}(c_2-c_1)}  {W^{(q)}(c_2)}.
\end{align*}
On the other hand, differentiating \eqref{g_c} yields
\begin{align*}
\frac \partial {\partial c_2}g(c_1,c_2) &=  -q W^{(q)}(c_2-c_1) + q W^{(q)}(c_2) \frac {W^{(q)}(c_2-c_1)}  {W^{(q)}(c_2)} +Z^{(q)}(c_2) \frac \partial {\partial c_2}\frac {W^{(q)}(c_2-c_1)}  {W^{(q)}(c_2)} \\
&= Z^{(q)}(c_2) \frac \partial {\partial c_2}\frac {W^{(q)}(c_2-c_1)}  {W^{(q)}(c_2)}.
\end{align*}
Using the last two equations along with \eqref{v_as_f_g}, we have
\begin{align*}
&g(c_1,c_2) \frac \partial {\partial c_2} \bar{v}_{c_1,_2}= \frac \partial {\partial c_2} f(c_1,c_2) -\bar{v}_{c_1,c_2}\frac \partial {\partial c_2} g(c_1,c_2) \\
&= -  \left[ \left(  c_2 - c_1 - \beta -\frac {\mu} q \right) Z^{(q)}(c_2) - R^{(q)}(c_2) \right] \frac \partial {\partial c_2}\frac {W^{(q)}(c_2-c_1)}  {W^{(q)}(c_2)} - \bar{v}_{c_1,c_2} Z^{(q)}(c_2) \frac \partial {\partial c_2}\frac {W^{(q)}(c_2-c_1)}  {W^{(q)}(c_2)} \\
&= -G(c_1,c_2) \frac  \partial {\partial c_2}\frac {W^{(q)}(c_2-c_1)}  {W^{(q)}(c_2)}.
\end{align*}
\end{proof}

\begin{lemma} \label{lemma_derivative_c_1}
For $0 < c_1 < c_2$,
\begin{align*}
&\frac \partial {\partial c_1} ( \bar{v}_{c_1,c_2} - c_1 ) = \frac \partial {\partial c_1} \left(\frac {f(c_1, c_2) - c_1 g(c_1,c_2)} {g(c_1,c_2)}  \right) 
= \frac 1 {g(c_1,c_2) } \Big[ -H(c_1,c_2)  + G(c_1,c_2)  \frac {W^{(q)'}(c_2-c_1)} {W^{(q)}(c_2)} \Big],
\end{align*}
where
\begin{align*}
H(c_1,c_2) :=q \left[ \gamma(c_1,c_2) W^{(q)}(c_2-c_1) - \overline{W}^{(q)}(c_2-c_1) \right]. 
\end{align*}
\end{lemma}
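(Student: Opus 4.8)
The plan is to differentiate the quotient representation $\bar{v}_{c_1,c_2}=f(c_1,c_2)/g(c_1,c_2)$ from \eqref{v_as_f_g} with respect to $c_1$, proceeding exactly as in the proof of Lemma \ref{lemma_derivative_v_c_2} but now differentiating in the other variable. First I would compute $\partial_{c_1} f$ and $\partial_{c_1} g$ separately. When differentiating in $c_1$, note that $c_1$ appears both explicitly (for instance through the factor $c_2-c_1-\beta-\mu/q$) and inside the argument $c_2-c_1$ of $R^{(q)}$, $Z^{(q)}$, and $W^{(q)}$; since $\tfrac{\partial}{\partial c_1}W^{(q)}(c_2-c_1)=-W^{(q)'}(c_2-c_1)$, $\tfrac{\partial}{\partial c_1}Z^{(q)}(c_2-c_1)=-qW^{(q)}(c_2-c_1)$, and $\tfrac{\partial}{\partial c_1}R^{(q)}(c_2-c_1)=-Z^{(q)}(c_2-c_1)$, each chain-rule term carries a sign opposite to the corresponding $c_2$-derivative.

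Next I would use the quotient rule in the form $g\,\partial_{c_1}\bar{v}=\partial_{c_1}f-\bar{v}\,\partial_{c_1}g$ and collect terms. I expect substantial cancellation, analogous to the collapse seen in Lemma \ref{lemma_derivative_v_c_2}: the explicit $-Z^{(q)}(c_2-c_1)$ terms arising from differentiating $R^{(q)}(c_2-c_1)$ and from the factor multiplying $Z^{(q)}(c_2-c_1)$ should cancel, leaving contributions proportional to $W^{(q)}(c_2-c_1)$, $\overline{W}^{(q)}(c_2-c_1)$, and $W^{(q)'}(c_2-c_1)$. The goal is to recognize the surviving $W^{(q)}(c_2-c_1)$ and $\overline{W}^{(q)}(c_2-c_1)$ terms as $-H(c_1,c_2)$ after substituting the definition $\gamma(c_1,c_2)=\bar{v}_{c_1,c_2}+c_2-c_1-\beta-\mu/q$ from \eqref{definition_gamma_G}, and the $W^{(q)'}(c_2-c_1)$ term as $G(c_1,c_2)\,W^{(q)'}(c_2-c_1)/W^{(q)}(c_2)$. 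Finally, since the target is $\partial_{c_1}(\bar{v}_{c_1,c_2}-c_1)=\partial_{c_1}\bar{v}_{c_1,c_2}-1$, I would track the extra $-1$ carefully; it is cleanest to differentiate $f(c_1,c_2)-c_1 g(c_1,c_2)$ as a single numerator (as the statement suggests), which automatically absorbs the $-1$ and keeps the bookkeeping tidy.

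The main obstacle is the bookkeeping in the cancellation step: unlike the $c_2$-derivative, differentiating in $c_1$ produces a genuine $W^{(q)'}(c_2-c_1)$ term (from the term $-[(c_2-c_1-\beta-\mu/q)Z^{(q)}(c_2)-R^{(q)}(c_2)]\,W^{(q)}(c_2-c_1)/W^{(q)}(c_2)$ in $f$, whose $c_1$-derivative hits both the scalar coefficient and the factor $W^{(q)}(c_2-c_1)$), and one must verify that the coefficient of this derivative term assembles precisely into $G(c_1,c_2)/W^{(q)}(c_2)$. The delicate point is confirming that the pieces contributed by $\partial_{c_1}f$ and by $-\bar{v}\,\partial_{c_1}g$ combine so that the scalar coefficient becomes exactly $\gamma(c_1,c_2)Z^{(q)}(c_2)-R^{(q)}(c_2)=G(c_1,c_2)$; this requires substituting $\bar{v}_{c_1,c_2}=f/g$ and using $\gamma$ to reabsorb the $\bar{v}_{c_1,c_2}$ dependence, just as in the previous lemma. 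Once these substitutions are made the identity should fall out; since the paper already works with right-derivatives of $W^{(q)}$ where needed, no regularity issues arise beyond the conventions already fixed.
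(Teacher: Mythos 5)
Your proposal is correct and follows essentially the same route as the paper's proof: the paper likewise works with the numerator $f(c_1,c_2)-c_1 g(c_1,c_2)$ (simplifying it before differentiating), applies the quotient rule in the form $g\,\partial_{c_1}\bigl((f-c_1g)/g\bigr)=\partial_{c_1}(f-c_1g)-(\bar{v}_{c_1,c_2}-c_1)\,\partial_{c_1}g$, and recognizes the surviving terms as $-H$ and $G\,W^{(q)\prime}(c_2-c_1)/W^{(q)}(c_2)$ after substituting $\gamma$. The cancellations and sign conventions you anticipate are exactly the ones that occur.
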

\begin{proof}
By \eqref{f_c} and \eqref{g_c},
\begin{align*}
 f(c_1, c_2) - c_1 g(c_1,c_2) &=   -R^{(q)}(c_2-c_1)  + \left( c_2 - c_1-  \beta - \frac {\mu} q \right) Z^{(q)}(c_2-c_1)   \\ &-  \left[ \left( c_2 - c_1 - \beta  -\frac {\mu} q  \right) Z^{(q)}(c_2) - R^{(q)}(c_2) \right] \frac {W^{(q)}(c_2-c_1)} {W^{(q)}(c_2)} \\
&- c_1 + c_1 Z^{(q)}(c_2-c_1)  - c_1 Z^{(q)}(c_2) \frac {W^{(q)}(c_2-c_1)}  {W^{(q)}(c_2)} \\
&=   -R^{(q)}(c_2-c_1)  - c_1 + \left(  c_2  - \beta  -\frac {\mu} q \right)  Z^{(q)}(c_2-c_1) \\ &-  \left[ \left(  c_2  - \beta -\frac {\mu} q \right) Z^{(q)}(c_2) - R^{(q)}(c_2)  \right] \frac {W^{(q)}(c_2-c_1)} {W^{(q)}(c_2)},
\end{align*}
and hence its derivative equals
\begin{align*}
 \frac \partial {\partial c_1}[f(c_1, c_2) - c_1 g(c_1,c_2)]  &= q \overline{W}^{(q)}(c_2-c_1) -   \left(  c_2  - \beta -\frac {\mu} q \right) q W^{(q)}(c_2-c_1)   \\ &+  \left[ \left(  c_2  - \beta -\frac {\mu} q \right) Z^{(q)}(c_2) - R^{(q)}(c_2)  \right]   \frac {W^{(q)'}(c_2-c_1)} {W^{(q)}(c_2)}.
\end{align*}
Because $\partial g(c_1, c_2)/\partial c_1 =  q W^{(q)}(c_2-c_1) - Z^{(q)}(c_2) \frac {W^{(q)'}(c_2-c_1)}  {W^{(q)}(c_2)}$ and by \eqref{v_as_f_g},
\begin{align*}
&g(c_1,c_2) \frac \partial {\partial c_1} \left(\frac {f(c_1, c_2) - c_1 g(c_1,c_2)} {g(c_1,c_2)}  \right) = \frac \partial {\partial c_1}[f(c_1, c_2) - c_1 g(c_1,c_2)] - (\bar{v}_{c_1,c_2} - c_1)   \frac \partial {\partial c_1}{g(c_1,c_2)}  \\
&= q \overline{W}^{(q)}(c_2-c_1) -   \left(  c_2  - \beta  -\frac {\mu} q \right) q W^{(q)}(c_2-c_1)   +  \left[ \left(  c_2  - \beta -\frac {\mu} q \right) Z^{(q)}(c_2) - R^{(q)}(c_2)  \right]  \frac {W^{(q)'}(c_2-c_1)} {W^{(q)}(c_2)} \\
&- (\bar{v}_{c_1,c_2} - c_1)  \left[ q W^{(q)}(c_2-c_1) - Z^{(q)}(c_2) \frac {W^{(q)'}(c_2-c_1)}  {W^{(q)}(c_2)} \right] \\
&= -H(c_1,c_2)  + G(c_1,c_2) \frac {W^{(q)'}(c_2-c_1)} {W^{(q)}(c_2)}.
\end{align*}
\end{proof}

\begin{remark}
The first-order conditions obtained above are for \eqref{v_above_c_2}.  However, these are in fact the same for \eqref{v_less_than_c_2} for any $0 \leq x < c_2$.  Differentiating the first equality of  \eqref{v_less_than_c_2}, 
\begin{align}
\frac \partial {\partial c_1} v_{c_1,c_2} (x) &=  \frac \partial {\partial c_1} (\bar{v}_{c_1,c_2} - c_1 -\beta) \left[Z^{(q)}(c_2-x) - Z^{(q)}(c_2) \frac {W^{(q)}(c_2-x)}  {W^{(q)}(c_2)} \right], \quad 0 < c_1 < c_2,
\end{align}
whose sign is the same as  that of $ \partial  (\bar{v}_{c_1,c_2} - c_1) / {\partial c_1}$ thanks to \eqref{laplace_in_terms_of_z} which guarantees that the expression inside the square brackets is positive.  Moreover, by differentiating  \eqref{v_less_than_c_2} and by Lemma \ref{lemma_derivative_v_c_2}, for $0 \leq c_1 < c_2$,
\begin{align*}
\frac \partial {\partial c_2}  v_{c_1,c_2} (x) 
&= - G(c_1,c_2) \frac \partial {\partial c_2} \frac {W^{(q)}(c_2-x)} {W^{(q)}(c_2)} + \left[Z^{(q)}(c_2-x) - Z^{(q)}(c_2) \frac {W^{(q)}(c_2-x)}  {W^{(q)}(c_2)} \right] \frac \partial {\partial c_2}\bar{v}_{c_1,c_2} \\
&= - G(c_1,c_2)  \left( 1 + \frac  {Z^{(q)}(c_2-x) - Z^{(q)}(c_2) \frac {W^{(q)}(c_2-x)}  {W^{(q)}(c_2)}} {g(c_1,c_2)}\right) \frac \partial {\partial c_2} \frac {W^{(q)}(c_2-x)} {W^{(q)}(c_2)},
\end{align*}
whose sign is the same as that of $\partial \bar{v}_{c_1,c_2} / {\partial c_2}$ due to item (3) of Remark~\ref{remark_smoothness_zero}.
\end{remark}

\subsection{Existence and some properties of maximizers}
Now we are ready to show that the maximizers of $\bar{v}_{c_1,c_2} - c_1$ exist. We will also describe equations that can be used to identify these points.

\begin{lemma} \label{lemma_sup_bounded_set}
We have $\sup_{0 \leq c_1 < c_2}(\bar{v}_{c_1,c_2} - c_1) = \sup_{0 \leq c_1 < c_2 \leq C}(\bar{v}_{c_1,c_2} - c_1)$ for sufficiently large $C < \infty$.
\end{lemma}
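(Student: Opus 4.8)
The plan is to show that $\bar v_{c_1,c_2}-c_1$ cannot approach its supremum as $c_2\to\infty$, and to exhibit a finite point realising a competing value. First I would recast the objective probabilistically. With $B:=\E_{c_1}\big[e^{-q\tau_{c_2}^+}1_{\{\tau_{c_2}^+<\tau_0^-\}}\big]$ and $D:=\E_{c_1}\big[e^{-q\tau_{c_2}^+}1_{\{\tau_{c_2}^+<\tau_0^-\}}(X_{\tau_{c_2}^+}-\beta)\big]$, formula \eqref{laplace_in_terms_of_z} and Lemma \ref{lemma_h} identify the denominator and numerator of \eqref{v_bar_def} as $g(c_1,c_2)=1-B\in(0,1)$ and $f(c_1,c_2)=D-c_1B$, whence
\[
  \bar v_{c_1,c_2}-c_1=\frac{f(c_1,c_2)-c_1 g(c_1,c_2)}{g(c_1,c_2)}=\frac{D-c_1}{1-B}.
\]
Evaluating \eqref{f_c}--\eqref{g_c} at $c_1=0$ gives $g(0,c_2)=1$ and $f(0,c_2)=0$, so $\bar v_{0,c_2}=0$ for all $c_2$; hence $S:=\sup_{0\le c_1<c_2}(\bar v_{c_1,c_2}-c_1)\ge0$, and the value $0$ is attained at the finite point $(0,1)$.

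The heart of the matter is the estimate $\limsup_{c_2\to\infty}\sup_{0\le c_1<c_2}(\bar v_{c_1,c_2}-c_1)\le0$, and here I would exploit a martingale identity rather than the (delicate) direct asymptotics of the scale functions, in which the leading-order terms cancel. Since $\mu=\E[X_1]$ is finite by Assumption \ref{assump_finiteness_mu}, Dynkin's formula applied to the identity function shows that $e^{-qt}X_t-c_1-\int_0^t e^{-qs}(\mu-qX_s)\ud s$ is a $\p_{c_1}$-martingale; optional stopping at $\tau_{c_2}^+\wedge\tau_0^-$, together with the fact that $X$ creeps downward so that $X_{\tau_0^-}=0$, yields
\[
  \E_{c_1}[e^{-q\tau_{c_2}^+}1_{\{\tau_{c_2}^+<\tau_0^-\}}X_{\tau_{c_2}^+}]-c_1=\E_{c_1}\Big[\int_0^{\tau_{c_2}^+\wedge\tau_0^-}e^{-qs}(\mu-qX_s)\ud s\Big].
\]
Letting $c_2\to\infty$ (so $\tau_{c_2}^+\wedge\tau_0^-\uparrow\tau_0^-$) and using $X_{\tau_0^-}=0$ once more, the right-hand side converges to $\E_{c_1}[\int_0^{\tau_0^-}e^{-qs}(\mu-qX_s)\ud s]=-c_1$, while $B\le\E_{c_1}[e^{-q\tau_{c_2}^+}1_{\{\tau_{c_2}^+<\infty\}}]\to0$. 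Hence $D-c_1\to-c_1$ and $\bar v_{c_1,c_2}-c_1\to-c_1\le0$ pointwise in $c_1$.

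It remains to upgrade this to a bound uniform in $c_1\in[0,c_2)$, which I expect to be the main obstacle. Dropping the ruin restriction (legitimate since the integrand is nonnegative) and using spatial homogeneity of the \lev process, one gets $\E_{c_1}[e^{-q\tau_{c_2}^+}1_{\{\tau_{c_2}^+<\tau_0^-\}}X_{\tau_{c_2}^+}]\le c_1\,\theta(d)+\rho(d)$ with $d:=c_2-c_1$, where $\theta(d):=\E_0[e^{-q\tau_d^+};\,\tau_d^+<\infty]<1$ is nonincreasing and $\rho(d):=\E_0[e^{-q\tau_d^+}X_{\tau_d^+};\,\tau_d^+<\infty]$ is bounded with $\theta(d),\rho(d)\to0$ as $d\to\infty$ (finiteness of the overshoot mean again using Assumption \ref{assump_finiteness_mu}). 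This gives $D-c_1\le-c_1(1-\theta(d))+\rho(d)-\beta B$, which controls the regime where $d=c_2-c_1$ is large (in particular $c_1$ bounded, $c_2\to\infty$); the complementary regime $c_1\uparrow c_2$ with $c_1$ large, where $1-B\to0$, is handled by the term $-\beta B$ (as $B\to1$) together with the negativity of $\mu-qX_s$ when $X_s$ is large. Carrying this out yields $\varepsilon(c_2)\to0$ with $\sup_{0\le c_1<c_2}(\bar v_{c_1,c_2}-c_1)\le\varepsilon(c_2)$ for large $c_2$, hence the claimed $\limsup\le0$. I would then conclude as follows: if $S>0$, choose $C$ with $\sup_{c_2>C}(\bar v_{c_1,c_2}-c_1)<S$, forcing the supremum to be realised over $\{c_2\le C\}$; if $S=0$, the point $(0,1)$ already lies in $\{c_2\le C\}$ for every $C\ge1$ and realises $S$. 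In either case $\sup_{0\le c_1<c_2}(\bar v_{c_1,c_2}-c_1)=\sup_{0\le c_1<c_2\le C}(\bar v_{c_1,c_2}-c_1)$ for $C$ sufficiently large.
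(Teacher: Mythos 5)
Your reduction logic is sound, and your algebraic recasting of the objective as $(\bar v_{c_1,c_2}-c_1)=(D-c_1)/(1-B)$ is correct (it is the same identity the paper's proof starts from). The gap is exactly where you anticipate it: the two estimates on which the uniform-in-$c_1$ bound rests are asserted, not proved, and the justifications you offer would not survive scrutiny. The crucial one is $\rho(d)=\E_0\bigl[e^{-q\tau_d^+}X_{\tau_d^+};\,\tau_d^+<\infty\bigr]\to0$ as $d\to\infty$. Since $X_{\tau_d^+}\geq d$ on $\{\tau_d^+<\infty\}$, this claim contains $\theta(d)=o(1/d)$, which follows neither from dominated convergence nor from ``finiteness of the overshoot mean'': Assumption \ref{assump_finiteness_mu} gives only a first moment, so the undiscounted overshoot need not have bounded mean at all, and when crossing of level $d$ occurs by a single large jump one has $\theta(d)\geq c\,\overline{\nu}(d)$ for a constant $c>0$, where $\overline{\nu}$ is the L\'evy tail; so you genuinely need $d\,\overline{\nu}(d)\to 0$ (true under finite mean, but requiring an argument) together with a compensation-formula/resolvent bound for the discounted overshoot $\E\bigl[e^{-q\tau_d^+}(X_{\tau_d^+}-d)\bigr]$, which is delicate when $\int_{(0,1)}z\,\nu(\diff z)=\infty$. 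This is a real piece of work, at least as delicate as the scale-function cancellation you set out to avoid -- and it is precisely what the citable limits $Z^{(q)}/W^{(q)}\to q/\Phi(q)$ and $R^{(q)}/W^{(q)}\to q/\Phi(q)^2$ used in the paper already package for you.

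The regime $c_1\uparrow c_2$ is also not disposed of as described. The assertion ``$B\to1$'' fails whenever $X$ is of bounded variation (irregular upward): there $\theta(d)\to\theta(0+)<1$ as $d\downarrow0$, so $B$ stays bounded away from $1$ and $-\beta B$ cannot close the argument; instead one must use $-c_1(1-\theta(d))\leq-c_1(1-\theta(0+))\to-\infty$. In the regular case one needs a quantitative lower bound on $B$, e.g.\ $B\geq\theta(d)-\E_{c_1}[e^{-q\tau_0^-}]=\theta(d)-e^{-\Phi(q)c_1}$ (using downward creeping), plus $\limsup_{d\downarrow0}\rho(d)<\beta$, plus an argument for the intermediate range of $d$ where neither $-c_1(1-\theta(d))$ nor $-\beta B$ obviously dominates $\rho(d)$; none of this is routine bookkeeping, and your sketch conflates the two cases. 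For contrast: the paper gets uniformity essentially for free from monotonicity rather than asymptotics of expectations. By Lemma \ref{lemma_h},
\begin{align*}
\frac{\partial}{\partial c_2}\E_{c_1}\bigl[e^{-q\tau_{c_2}^+}1_{\{\tau_{c_2}^+<\tau_0^-\}}(X_{\tau_{c_2}^+}-\beta)\bigr]
=-A(c_2)\,W^{(q)}(c_2-c_1)\left(\frac{W^{(q)'}(c_2-c_1)}{W^{(q)}(c_2-c_1)}-\frac{W^{(q)'}(c_2)}{W^{(q)}(c_2)}\right),
\end{align*}
which is nonpositive as soon as $A(c_2)>0$ (log-concavity of $W^{(q)}$, Remark \ref{remark_smoothness_zero}(3), together with the two ratio limits), so for $c_1$ large the supremum over $c_2$ is attained in the limit $c_2\downarrow c_1$ and is negative; a second derivative estimate then covers bounded $c_1$ and large $c_2$. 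If you want to keep your probabilistic route, the missing lemmas on $\theta$ and $\rho$ must actually be proved, not attributed to Assumption \ref{assump_finiteness_mu}.
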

\begin{proof}
By Lemma \ref{lemma_h} and  \eqref{laplace_in_terms_of_z}, for any $c_2 > c_1 \geq 0$,
\begin{align*} 
\E_{c_1} \left[ e^{-q \tau_{c_2}^+} 1_{\{ \tau_{c_2}^+ < \tau_0^-\}} (X_{\tau_{c_2}^+} - \beta )\right] &= - R^{(q)}(c_2-c_1) + \left(c_2 - \beta - \frac \mu q \right) Z^{(q)}(c_2-c_1) \\ &- \left[ \left(c_2 - \beta - \frac \mu q \right) Z^{(q)}(c_2) - R^{(q)}(c_2)  \right] \frac {W^{(q)}(c_2-c_1)} {W^{(q)}(c_2)},
\end{align*}
and hence
\begin{align} \label{expectation_derivative_c_2}
\begin{split}
\frac \partial {\partial c_2}\E_{c_1} \left[ e^{-q \tau_{c_2}^+} 1_{\{ \tau_{c_2}^+ < \tau_0^-\}} (X_{\tau_{c_2}^+} - \beta )\right] &=- \left[ \left( c_2 - \beta - \frac \mu q \right) Z^{(q)}(c_2) - R^{(q)}(c_2) \right] \frac \partial {\partial c_2} \frac {W^{(q)}(c_2-c_1)} {W^{(q)}(c_2)} \\
&=- A(c_2) W^{(q)}(c_2-c_1) \left( \frac {W^{(q)'}(c_2-c_1)}  {W^{(q)}(c_2-c_1)} - \frac {W^{(q)'}(c_2)}  {W^{(q)}(c_2)} \right),
\end{split}
\end{align}
where $A(c) := \left( c - \beta - \frac \mu q \right) \frac {Z^{(q)}(c)} {W^{(q)}(c)} - \frac {R^{(q)}(c)} {W^{(q)}(c)}$, $c > 0$.
 It follows from Exercise 8.5 of \cite{Kyprianou_2006} and Proposition 2 of \cite{Avram_et_al_2007} that $Z^{(q)}(c)/W^{(q)}(c) \rightarrow q/\Phi(q) \in (0,\infty)$ and $R^{(q)}(c)/W^{(q)}(c) \rightarrow q/\Phi(q)^{2} \in (0,\infty)$ as $c \uparrow \infty$, respectively. As a result,  $A(c) \uparrow \infty$ and hence there exists $B < \infty$ such that  
\begin{align}
A(c)> 0, \quad c \geq B. \label{bound_B}
\end{align}
Now because $\frac {W^{(q)'}(c_2-c_1)}  {W^{(q)}(c_2-c_1)} - \frac {W^{(q)'}(c_2)}  {W^{(q)}(c_2)}  > 0$ by Remark  \ref{remark_smoothness_zero}(3), we have $ \partial \E_{c_1} \left[ e^{-q \tau_{c_2}^+} 1_{\{ \tau_{c_2}^+ < \tau_0^-\}} (X_{\tau_{c_2}^+} - \beta )\right] / {\partial c_2} < 0$ for any $c_2 > c_1 \geq B$.  Hence for any fixed $c_1 \geq B$,
\begin{align*}
\sup_{c_2: c_2 > c_1}\E_{c_1} \left[ e^{-q \tau_{c_2}^+} 1_{\{ \tau_{c_2}^+ < \tau_0^-\}} (X_{\tau_{c_2}^+} - \beta )\right] = \lim_{c_2 \downarrow c_1}\E_{c_1} \left[ e^{-q \tau_{c_2}^+} 1_{\{ \tau_{c_2}^+ < \tau_0^-\}} (X_{\tau_{c_2}^+} - \beta )\right] = (c_1 - \beta) - A(c_1) W^{(q)}(0).
\end{align*}
Now by the definition of $\bar{v}_{c_1,c_2}$ as in \eqref{v_bar_def}, for any fixed $c_1 \geq B$,
\begin{align*}
\sup_{c_2: c_2 > c_1}(\bar{v}_{c_1,c_2}- c_1) &= \sup_{c_2:c_2 > c_1} \frac {- c_1 + \E_{c_1} \left[ e^{-q \tau_{c_2}^+} 1_{\{ \tau_{c_2}^+ < \tau_0^-\}} (X_{\tau_{c_2}^+}  - \beta)\right]} {1- \E_{c_1} \left[ e^{-q \tau_{c_2}^+} 1_{\{ \tau_{c_2}^+ < \tau_0^-\}} \right]} \\ &\leq \sup_{c_2:c_2 > c_1} \frac {- c_1 +\sup_{c_2: c_2 > c_1}\E_{c_1} \left[ e^{-q \tau_{c_2}^+} 1_{\{ \tau_{c_2}^+ < \tau_0^-\}} (X_{\tau_{c_2}^+} - \beta)\right]} {1- \E_{c_1} \left[ e^{-q \tau_{c_2}^+} 1_{\{ \tau_{c_2}^+ < \tau_0^-\}} \right]} \\ &\leq \sup_{c_2: c_2 > c_1} \frac { - \beta - A(c_1) W^{(q)}(0) } {1- \E_{c_1} \left[ e^{-q \tau_{c_2}^+} 1_{\{ \tau_{c_2}^+ < \tau_0^-\}} \right]},
\end{align*}
which is negative by \eqref{bound_B}.  On the other hand, because $c_1=0$ and $c_2 > 0$ attain $\bar{v}_{c_1,c_2} - c_1 = 0$,
we have
\begin{align}
\sup_{(c_1,c_2): c_2 > c_1\geq 0}(\bar{v}_{c_1,c_2}- c_1) = \sup_{(c_1,c_2):  c_2 > c_1 \geq 0, c_1 \leq  B}(\bar{v}_{c_1,c_2}- c_1). \label{sup_reduction}
\end{align}

Now fix $c_1 \leq B$ and $c_2 \geq B + \delta$ for any $\delta > 0$.  Then 
\begin{align*}
\bar{v}_{c_1,c_2}- c_1 = \frac {-c_1 + \E_{c_1} \left[ e^{-q \tau_{c_2}^+} 1_{\{ \tau_{c_2}^+ < \tau_0^-\}} (X_{\tau_{c_2}^+}  - \beta)\right]} {1- \E_{c_1} \left[ e^{-q \tau_{c_2}^+} 1_{\{ \tau_{c_2}^+ < \tau_0^-\}} \right]} \geq \frac {- B - \beta} {1- \E_{B} \left[ e^{-q \tau_{B+\delta}^+} 1_{\{ \tau_{B+\delta}^+ < \tau_0^-\}} \right]} =: M > -\infty.
\end{align*}
By Lemma \ref{lemma_derivative_v_c_2} and Remark  \ref{remark_smoothness_zero}(3), 
\begin{align*}
&\frac \partial {\partial c_2}\bar{v}_{c_1,c_2}
= -\frac {G(c_1,c_2)} {g(c_1,c_2)}\frac  \partial {\partial c_2}\frac {W^{(q)}(c_2-c_1)}  {W^{(q)}(c_2)} \\
&= - \left( \left[\bar{v}_{c_1,c_2}  + c_2 - c_1 -\beta  -\frac {\mu} q \right]  \frac {Z^{(q)}(c_2)}{W^{(q)}(c_2)}  - \frac{R^{(q)}(c_2)}{W^{(q)}(c_2)}   \right) \frac {W^{(q)}(c_2-c_1)} {g(c_1,c_2)} \left( \frac {W^{(q)'}(c_2-c_1)}  {W^{(q)}(c_2-c_1)} - \frac {W^{(q)'}(c_2)}  {W^{(q)}(c_2)} \right) \\
&\leq -\left( \left[M + c_2  -\beta  -\frac {\mu} q \right]  \frac {Z^{(q)}(c_2)}{W^{(q)}(c_2)}  - \frac{R^{(q)}(c_2)}{W^{(q)}(c_2)}   \right)  \frac {W^{(q)}(c_2-c_1)} {g(c_1,c_2)}  \left( \frac {W^{(q)'}(c_2-c_1)}  {W^{(q)}(c_2-c_1)} - \frac {W^{(q)'}(c_2)}  {W^{(q)}(c_2)} \right).
\end{align*}
Using Remark \ref{remark_smoothness_zero}(3) and the fact that $(M + c_2 -\beta  -\frac {\mu} q ) \frac {Z^{(q)}(c_2)}{W^{(q)}(c_2)}  - \frac{R^{(q)}(c_2)}{W^{(q)}(c_2)}  \xrightarrow{c_2 \uparrow \infty} \infty$,
 it follows that there exists a sufficiently large constant $C$ such that
\begin{align*}
\sup_{c_1 \leq B, c_2 \geq C}\frac \partial {\partial c_2}\bar{v}_{c_1,c_2} \leq 0.
\end{align*}
Combining the last  inequality with \eqref{sup_reduction} completes the proof.
\end{proof}

\begin{lemma} \label{lemma_G_limit}
Fix any $c_1 \geq 0$, $\lim_{c_2 \downarrow c_1}G(c_1,c_2) < 0$.
\end{lemma}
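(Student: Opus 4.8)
The plan is to avoid analyzing $G(c_1,c_2)$ directly, because in the unbounded-variation case the denominator $g(c_1,c_2)$ itself degenerates as $c_2\downarrow c_1$, which makes a direct passage to the limit in $\bar v_{c_1,c_2}=f/g$ awkward. Instead I would study the product $g(c_1,c_2)\,G(c_1,c_2)$, whose limit is clean and unambiguous. From \eqref{definition_gamma_G} one has
\begin{align*}
G(c_1,c_2) = \bar{v}_{c_1,c_2}\,Z^{(q)}(c_2) + P(c_1,c_2), \qquad P(c_1,c_2) := \Big(c_2-c_1-\beta-\frac{\mu}{q}\Big)Z^{(q)}(c_2) - R^{(q)}(c_2),
\end{align*}
and, using $\bar{v}_{c_1,c_2}=f(c_1,c_2)/g(c_1,c_2)$ from \eqref{v_as_f_g} and multiplying through by $g$, the identity
\begin{align*}
g(c_1,c_2)\,G(c_1,c_2) = f(c_1,c_2)\,Z^{(q)}(c_2) + P(c_1,c_2)\,g(c_1,c_2),
\end{align*}
in which every factor on the right extends continuously to $c_2=c_1$.

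First I would compute the one-sided limits of these factors using continuity of the scale functions together with the boundary values $Z^{(q)}(0)=1$, $\overline{Z}^{(q)}(0)=0$ (hence $R^{(q)}(0)=-\mu/q$), and $W^{(q)}(0)$ from \eqref{eq:Wq0}. Writing $W_0:=W^{(q)}(0)$, one finds for $c_1>0$ that
\begin{align*}
f(c_1,c_1+) = -\beta - P(c_1,c_1+)\frac{W_0}{W^{(q)}(c_1)}, \qquad g(c_1,c_1+) = Z^{(q)}(c_1)\frac{W_0}{W^{(q)}(c_1)},
\end{align*}
with $P(c_1,c_1+) = (-\beta-\mu/q)Z^{(q)}(c_1)-R^{(q)}(c_1)$. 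Substituting into the displayed identity, the two terms carrying the factor $P(c_1,c_1+)W_0/W^{(q)}(c_1)$ cancel exactly, leaving the key limit
\begin{align*}
\lim_{c_2\downarrow c_1} g(c_1,c_2)\,G(c_1,c_2) = -\beta\, Z^{(q)}(c_1) < 0,
\end{align*}
since $Z^{(q)}(c_1)\ge 1$ and $\beta>0$. For $c_1=0$ the same conclusion is immediate and cleaner, because the second form of $f$ in \eqref{f_c} and \eqref{g_c} collapse to $f(0,c_2)\equiv 0$ and $g(0,c_2)\equiv 1$, so $gG=G\to -\beta$.

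To pass from the product back to $G$, I would split on $g_0:=\lim_{c_2\downarrow c_1}g(c_1,c_2)\ge 0$. In the bounded-variation case (and whenever $c_1=0$) we have $W_0>0$, so $g_0>0$ is finite and $\lim_{c_2\downarrow c_1}G(c_1,c_2)=-\beta Z^{(q)}(c_1)/g_0<0$. In the unbounded-variation case with $c_1>0$ we have $W_0=0$, hence $g_0=0$; here I would use the probabilistic representation $g(c_1,c_2)=1-\E_{c_1}[e^{-q\tau_{c_2}^+}1_{\{\tau_{c_2}^+<\tau_0^-\}}]>0$ for every $c_2>c_1$, which follows by comparing \eqref{g_c} with \eqref{laplace_in_terms_of_z} (the expectation is strictly below $1$ because $q>0$ and ruin has positive probability). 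Thus $g$ decreases to $0$ strictly from above, and since $gG$ converges to a strictly negative limit, $G=(gG)/g\to-\infty$. In all cases $\lim_{c_2\downarrow c_1}G(c_1,c_2)<0$.

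The one step that requires care is precisely this final dichotomy: $G=(gG)/g$ is a (finite negative)$/0$ form exactly when $g_0=0$, and ruling out an ambiguous or $+\infty$ limit needs the sign information $g>0$ on $(c_1,\infty)$, which is not transparent from the analytic formula \eqref{g_c} but is obvious from the probabilistic meaning of $g$. I would also flag that the only situation in which the analytic expression $g_0=Z^{(q)}(c_1)W_0/W^{(q)}(c_1)$ is an indeterminate $0/0$ is unbounded variation with $c_1=0$; this is handled separately by $g(0,\cdot)\equiv 1$, so no genuine gap remains.
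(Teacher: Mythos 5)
Your proof is correct, and while it rests on the same underlying limits (those of $f$ and $g$ as $c_2 \downarrow c_1$), it is organized differently from the paper's. The paper computes $\lim_{c_2\downarrow c_1}\gamma(c_1,c_2)$ directly — equivalently $\lim \bar v_{c_1,c_2} = \lim f/g$ — finding $-\infty$ in the unbounded variation case and $Z^{(q)}(c_1)^{-1}\bigl[-\beta W^{(q)}(c_1)/W^{(q)}(0) + R^{(q)}(c_1)\bigr]$ in the bounded variation case, and then substitutes into $G = \gamma Z^{(q)} - R^{(q)}$ to obtain $-\infty$ and $-\beta W^{(q)}(c_1)/W^{(q)}(0)$, respectively. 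You instead multiply by $g$ first, so that the limit $\lim_{c_2\downarrow c_1} g\,G = -\beta Z^{(q)}(c_1)$ is finite and unambiguous in every case, and only then divide by $g$; the answers agree, since in the bounded variation case $-\beta Z^{(q)}(c_1)/g_0 = -\beta W^{(q)}(c_1)/W^{(q)}(0)$. Your packaging buys two things. First, it isolates and justifies the sign fact $g>0$ via the probabilistic identity $g(c_1,c_2) = 1 - \E_{c_1}\bigl[e^{-q\tau_{c_2}^+}1_{\{\tau_{c_2}^+<\tau_0^-\}}\bigr]$, which is genuinely needed to conclude that the unbounded-variation limit is $-\infty$ rather than $+\infty$; the paper uses this implicitly without comment. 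Second, it handles the corner case $c_1=0$ with $X$ of unbounded variation correctly: there $f(0,\cdot)\equiv 0$ and $g(0,\cdot)\equiv 1$, so $\bar v_{0,c_2}\equiv 0$, $\gamma(0,c_2)\to -\beta-\mu/q$ (finite, not $-\infty$), and $G(0,c_2)\to-\beta$; the paper's stated dichotomy, which assigns the limit $-\infty$ to every unbounded variation process, is inaccurate at exactly this point, although the lemma's conclusion is unaffected. The only cost of your route is length.
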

\begin{proof}
We have
\begin{align*}
\gamma (c_1,c_2) \xrightarrow{c_2 \downarrow c_1} \left\{ \begin{array}{ll} -\infty, & \textrm{if $X$ is of unbounded variation}, \\  Z^{(q)}(c_1)^{-1} \left[ -\frac {W^{(q)}(c_1)} {W^{(q)}(0)} \beta + R^{(q)}(c_1)\right], & \textrm{if $X$ is of bounded variation}. \end{array} \right.
\end{align*}
When $X$ is of unbounded variation $\lim_{c_2 \downarrow c_1}G(c_1,c_2) = -\infty$ while when $X$ is of bounded variation, by \eqref{definition_gamma_G},
 $\lim_{c_2 \downarrow c_1}G(c_1,c_2) = - \frac {W^{(q)}(c_1)} {W^{(q)}(0)} \beta < 0$.

\end{proof}

This lemma, together with Lemma \ref{lemma_derivative_v_c_2} and Remark \ref{remark_smoothness_zero}(3), implies that, for any fixed $c_1 \geq 0$, $\partial \bar{v}_{c_1,c_2}/  {\partial c_2}$ is negative near $c_1$; consequently there exist $\bar{v}_{c_1,c_1} := \lim_{c_2 \downarrow c_1} \bar{v}_{c_1,c_2}$ (which can be shown to be $-\infty$ when $X$ is of unbounded variation).   Because $\bar{v}_{c_1,c_2} - c_1$ is continuous and we have a compact domain $\{ (c_1,c_2): 0 \leq c_1 \leq c_2, 0 \leq c_2 \leq C \}$ for large $C$ by Lemma \ref{lemma_sup_bounded_set}, we have a maximum.   Furthermore, Lemmas \ref{lemma_derivative_v_c_2} and \ref{lemma_G_limit} show that if $c_1$ and $c_2$ maximize $\bar{v}_{c_1,c_2} - c_1$,  it must hold that $c_2$ is away from $c_1$.

\begin{lemma} \label{lemma_h_g}
Suppose $c_1$ and $c_2$ maximize $\bar{v}_{c_1,c_2} - c_1$.  Then $G(c_1,c_2) = 0$ and  $H(c_1,c_2) \geq 0$.   In particular, if $c_1 > 0$, we must have $H(c_1,c_2) = 0$.
\end{lemma}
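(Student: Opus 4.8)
The plan is to read both conclusions off the two first-order identities already established, evaluated at the maximizing pair $(c_1,c_2)$. By Lemma~\ref{lemma_sup_bounded_set} and Lemma~\ref{lemma_G_limit} (combined with Lemma~\ref{lemma_derivative_v_c_2} and Remark~\ref{remark_smoothness_zero}(3)) a maximizer exists, $c_2$ is bounded away from $c_1$, and $c_2<\infty$; hence $c_2$ lies in the interior of its feasible range, so the stationarity condition $\partial(\bar{v}_{c_1,c_2}-c_1)/\partial c_2=0$ holds, whereas in the $c_1$-direction we obtain an \emph{equality} only when $c_1>0$ (interior) and a one-sided \emph{inequality} when $c_1=0$.

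First I would establish $G(c_1,c_2)=0$. By Lemma~\ref{lemma_derivative_v_c_2},
\begin{align*}
0 = \frac{\partial}{\partial c_2}(\bar{v}_{c_1,c_2}-c_1) = -\frac{G(c_1,c_2)}{g(c_1,c_2)}\,\frac{\partial}{\partial c_2}\frac{W^{(q)}(c_2-c_1)}{W^{(q)}(c_2)}.
\end{align*}
Here $g(c_1,c_2)=1-\E_{c_1}[e^{-q\tau_{c_2}^+}1_{\{\tau_{c_2}^+<\tau_0^-\}}]>0$, while writing the derivative of the ratio as $\tfrac{W^{(q)}(c_2-c_1)}{W^{(q)}(c_2)}\big(\tfrac{W^{(q)'}(c_2-c_1)}{W^{(q)}(c_2-c_1)}-\tfrac{W^{(q)'}(c_2)}{W^{(q)}(c_2)}\big)$ and invoking the monotonicity of $W^{(q)'}/W^{(q)}$ in Remark~\ref{remark_smoothness_zero}(3) shows it is strictly positive whenever $c_1>0$. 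Hence the product can vanish only if $G(c_1,c_2)=0$.

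With $G(c_1,c_2)=0$ in hand the condition on $H$ follows from Lemma~\ref{lemma_derivative_c_1}, which collapses to $\partial(\bar{v}_{c_1,c_2}-c_1)/\partial c_1 = [-H(c_1,c_2)+G(c_1,c_2)W^{(q)'}(c_2-c_1)/W^{(q)}(c_2)]/g(c_1,c_2) = -H(c_1,c_2)/g(c_1,c_2)$. When $c_1>0$ the maximizer is interior in $c_1$, so this derivative is zero and $H(c_1,c_2)=0$; when $c_1=0$ only the right-derivative is available and optimality forces it to be $\le 0$, so $-H(0,c_2)/g(0,c_2)\le 0$ and therefore $H(0,c_2)\ge 0$. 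This gives $H\ge 0$ in all cases, with equality when $c_1>0$.

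The delicate point, and the main obstacle, is the claim $G=0$ when $c_1=0$. There $W^{(q)}(c_2-c_1)/W^{(q)}(c_2)$ collapses to $1$, its $c_2$-derivative vanishes identically (indeed $\bar{v}_{0,c_2}\equiv 0$, since starting from $0$ one has $\tau_0^-=0$ almost surely), and the displayed identity becomes vacuous. To pin down $G(0,c_2)=0$ I would instead differentiate the full value function at an interior point: by the remark following Lemma~\ref{lemma_derivative_c_1}, for $0<x<c_2$,
\begin{align*}
\frac{\partial}{\partial c_2}v_{c_1,c_2}(x) = -G(c_1,c_2)\left(1+\frac{Z^{(q)}(c_2-x)-Z^{(q)}(c_2)\frac{W^{(q)}(c_2-x)}{W^{(q)}(c_2)}}{g(c_1,c_2)}\right)\frac{\partial}{\partial c_2}\frac{W^{(q)}(c_2-x)}{W^{(q)}(c_2)},
\end{align*}
whose bracketed factor (being $1$ plus the nonnegative quantity $\E_x[e^{-q\tau_{c_2}^+}1_{\{\tau_{c_2}^+<\tau_0^-\}}]/g$) and whose ratio-derivative are both strictly positive for $0<x<c_2$; since the maximizer of $\bar{v}_{c_1,c_2}-c_1$ simultaneously maximizes $v_{c_1,c_2}(x)$, stationarity in $c_2$ again forces $G(0,c_2)=0$. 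Justifying this transfer of optimality to the degenerate edge $c_1=0$, and the continuity needed to take the one-sided limit of Lemma~\ref{lemma_derivative_c_1} down to $c_1=0$, is where the genuine work lies; the remainder is sign-chasing using $g>0$ and Remark~\ref{remark_smoothness_zero}(3).
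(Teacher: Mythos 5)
Your treatment of the case $c_1>0$, and of the $H$-part, is exactly the paper's proof: Lemmas \ref{lemma_sup_bounded_set} and \ref{lemma_G_limit} place the maximizing $c_2$ in $(c_1,\infty)$, the first-order condition in $c_2$ combined with Lemma \ref{lemma_derivative_v_c_2}, $g>0$, and the strict monotonicity of $W^{(q)'}/W^{(q)}$ (Remark \ref{remark_smoothness_zero}(3)) forces $G(c_1,c_2)=0$, and then Lemma \ref{lemma_derivative_c_1} collapses to $\partial(\bar{v}_{c_1,c_2}-c_1)/\partial c_1=-H(c_1,c_2)/g(c_1,c_2)$, giving $H\geq 0$ by optimality and $H=0$ at an interior $c_1$. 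The paper's proof is precisely this and makes \emph{no} case distinction at $c_1=0$; your observation that the $c_2$-identity degenerates there is correct: at $c_1=0$ the ratio $W^{(q)}(c_2-c_1)/W^{(q)}(c_2)$ is identically one, its $c_2$-derivative vanishes, and indeed $\bar{v}_{0,c_2}\equiv 0$ (ruin is immediate from $0$ for a spectrally positive process; equivalently $f(0,c_2)=0$ and $g(0,c_2)=1$). So you have found a gap in the paper's own argument, not merely in yours.

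However, your proposed repair does not close that gap, because its premise---that a maximizer of $\bar{v}_{c_1,c_2}-c_1$ simultaneously maximizes $v_{c_1,c_2}(x)$---fails exactly in the problematic regime. If some maximizer has $c_1=0$, the maximal value is $\bar{v}_{0,c_2}-0=0$; since $\bar{v}_{0,c_2'}\equiv 0$, \emph{every} pair $(0,c_2')$ with $c_2'>0$ is then also a maximizer. Yet $v_{0,c_2'}(x)$ is genuinely non-constant in $c_2'$: by your own display its $c_2'$-derivative has the sign of $-G(0,c_2')$, and $G(0,c_2')=(c_2'-\beta-\mu/q)Z^{(q)}(c_2')-R^{(q)}(c_2')$ runs from $-\beta$ (resp.\ $-\infty$ in the unbounded variation case) near $c_2'=0$ up to $+\infty$, so it is not identically zero. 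Hence maximality of $\bar{v}_{c_1,c_2}-c_1$ transfers no stationarity to $c_2'\mapsto v_{0,c_2'}(x)$; worse, the lemma read literally (``every maximizer satisfies $G=0$'') is \emph{false} in this regime, any $(0,c_2')$ with $G(0,c_2')\neq 0$ being a counterexample, so no argument can rescue it. What is true, and what the rest of the paper actually uses, is the existence statement of Proposition \ref{proposition_existence}: when the maximum equals $0$, \emph{select} $c_2^*$ to be a zero of $G(0,\cdot)$, which exists by continuity together with Lemma \ref{lemma_G_limit} and the divergence $G(0,c_2)\to\infty$ established in the proof of Lemma \ref{lemma_sup_bounded_set}; for that particular choice, $g(0,c_2^*)=1$ and your one-sided first-order argument in $c_1$ (valid because $c_1=0$ still maximizes $c_1\mapsto\bar{v}_{c_1,c_2^*}-c_1$) yields $H(0,c_2^*)\geq 0$. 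In short: keep your proof verbatim for $c_1>0$, but for $c_1=0$ the correct move is to weaken the claim to existence and use this selection, not to transfer optimality to $v_{c_1,c_2}(x)$.
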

\begin{proof}
By Lemmas \ref{lemma_sup_bounded_set} and  \ref{lemma_G_limit}, $c_2 \in (c_1,\infty)$.  Hence, by Lemma \ref{lemma_derivative_v_c_2}, we must have $G(c_1,c_2) = 0$.  On the other hand, by Lemma \ref{lemma_derivative_c_1},
\begin{align*}
&\frac \partial {\partial c_1} ( \bar{v}_{c_1,c_2} - c_1 ) 
= -\frac { H(c_1,c_2)} {g(c_1,c_2) }.
\end{align*}
If $H(c_1,c_2) < 0$, the derivative is positive and it violates the assumed optimality.  In particular, if $c_1 \in (0,c_2)$, then the derivative must vanish and hence $H(c_1,c_2) = 0$.
\end{proof}



Combining the above arguments, we arrive at the following proposition.

\begin{proposition} \label{proposition_existence}There exist  $(c_1,c_2)$ that maximize $\bar{v}_{c_1,c_2} - c_1$ and satisfy the following two properties.
\begin{enumerate}
\item $0 < c_2 < \infty$ and  $G(c_1,c_2) = 0$;
\item either $0 < c_1 < c_2$ with  $H(c_1,c_2) = 0$,
or $c_1 = 0$ with $H(0,c_2) \geq 0$.
\end{enumerate}
\end{proposition}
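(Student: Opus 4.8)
The plan is to assemble the machinery built up in Lemmas \ref{lemma_sup_bounded_set} through \ref{lemma_h_g}, since each asserted property is just a first-order or boundary optimality condition that those lemmas have already isolated. First I would settle \emph{existence}. By Lemma \ref{lemma_sup_bounded_set} the supremum of $\bar{v}_{c_1,c_2}-c_1$ over the full region $\{0\le c_1<c_2\}$ coincides with its supremum over the compact set $\{(c_1,c_2): 0\le c_1\le c_2\le C\}$ for a suitably large $C<\infty$. Since $\bar{v}_{c_1,c_2}-c_1$ is continuous there (with $\bar{v}_{c_1,c_1}:=\lim_{c_2\downarrow c_1}\bar{v}_{c_1,c_2}$ extending it to the diagonal, equal to $-\infty$ in the unbounded-variation case, which merely excludes the diagonal), a maximizing pair $(c_1,c_2)$ is attained.

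Next I would show that any maximizer has $0<c_2<\infty$ and satisfies $G(c_1,c_2)=0$, giving property (1). Finiteness $c_2\le C$ is immediate. To see $c_2$ is bounded away from $c_1$, I invoke Lemma \ref{lemma_G_limit}: for each fixed $c_1$ one has $\lim_{c_2\downarrow c_1}G(c_1,c_2)<0$, so $G(c_1,c_2)<0$ just above the diagonal. Combining this with Lemma \ref{lemma_derivative_v_c_2} and the monotonicity of $W^{(q)'}(y)/W^{(q)}(y)$ from Remark \ref{remark_smoothness_zero}(3) — which forces $\tfrac{\partial}{\partial c_2}\tfrac{W^{(q)}(c_2-c_1)}{W^{(q)}(c_2)}>0$ while $g(c_1,c_2)>0$ — shows $\tfrac{\partial}{\partial c_2}\bar{v}_{c_1,c_2}>0$ in a right-neighbourhood of the diagonal. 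Hence no diagonal point is optimal, so $c_2>c_1\ge 0$, i.e.\ $c_2>0$. Because the $c_2$-derivative is positive near the diagonal and non-positive for $c_2\ge C$ by the construction in Lemma \ref{lemma_sup_bounded_set}, the maximizing $c_2$ satisfies the first-order condition $\tfrac{\partial}{\partial c_2}\bar{v}_{c_1,c_2}=0$; by Lemma \ref{lemma_derivative_v_c_2}, and since the remaining factors are nonzero, this is equivalent to $G(c_1,c_2)=0$.

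Finally I would treat the $c_1$-direction for property (2), which is exactly Lemma \ref{lemma_h_g}. With $G(c_1,c_2)=0$ in hand, Lemma \ref{lemma_derivative_c_1} collapses to $\tfrac{\partial}{\partial c_1}(\bar{v}_{c_1,c_2}-c_1)=-H(c_1,c_2)/g(c_1,c_2)$ with $g>0$. If $c_1\in(0,c_2)$, the pair is interior in the $c_1$-direction, this derivative vanishes, and $H(c_1,c_2)=0$. If instead $c_1=0$, only the one-sided derivative is constrained: optimality forbids an increase of the objective as $c_1$ moves up off the boundary, i.e.\ $\tfrac{\partial}{\partial c_1}(\bar{v}_{0,c_2}-c_1)\le 0$, which by the same identity yields $H(0,c_2)\ge 0$.

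The individual steps are short because the analytic content is front-loaded into the lemmas; the one point demanding care is the sign bookkeeping in the second paragraph. One must correctly combine the sign of $G$ near the diagonal with the positivity of both $g(c_1,c_2)$ and $\tfrac{\partial}{\partial c_2}\tfrac{W^{(q)}(c_2-c_1)}{W^{(q)}(c_2)}$ to conclude that $\bar{v}_{c_1,c_2}$ is \emph{increasing} in $c_2$ near the diagonal, and one must check that the maximizing $c_2$ is genuinely interior (off the diagonal, and not forced onto the cutoff $C$) so that an equality first-order condition — rather than a mere inequality — is legitimate. The analogous care is needed at $c_1=0$, where only an inequality is available; this asymmetry is precisely what produces the two cases in property (2).
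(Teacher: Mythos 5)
Your proposal is correct and follows essentially the same route as the paper, whose proof of this proposition is precisely the combination of Lemma \ref{lemma_sup_bounded_set} (existence over a compact domain), Lemma \ref{lemma_G_limit} (ruling out the diagonal), and the first-order conditions from Lemmas \ref{lemma_derivative_v_c_2} and \ref{lemma_derivative_c_1}, as packaged in Lemma \ref{lemma_h_g}. One remark: your sign bookkeeping showing $\partial \bar{v}_{c_1,c_2}/\partial c_2 > 0$ just above the diagonal is the correct one --- it is what forces maximizers off the diagonal and is consistent with $\bar{v}_{c_1,c_1} = -\infty$ in the unbounded-variation case --- whereas the paper's prose asserting that this derivative is ``negative near $c_1$'' contains a sign slip.
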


%
%
%

\begin{remark} \label{remark_gamma_positive} Suppose $c_1$ and $c_2$ are such that $H(c_1,c_2) \geq 0$ and $G(c_1,c_2) = 0$.  Then, $\gamma(c_1,c_2) > 0$.  To see why this is so, by Lemma \ref{lemma_G_limit}, $G(c_1,c_2) = 0$ implies that $c_1 < c_2$ and, together with $H(c_1,c_2) \geq 0$, we have $\gamma(c_1,c_2) \geq \overline{W}^{(q)}(c_2-c_1)/W^{(q)}(c_2-c_1)> 0$.
\end{remark}

\section{Verification of optimality} \label{section_verification}
By Proposition \ref{proposition_existence}, there exist $0 \leq c_1^* < c_2^* < \infty$ such that $G(c_1^*,c_2^*) = 0$ and either 
\begin{description}
\item[Case 1] $c_1^* > 0$ with $H(c_1^*,c_2^*) = 0$,
or 
\item[Case 2] $c_1^* = 0$ with $H(0,c_2^*) \geq 0$.
\end{description}
We will show that such a $(c_1^*,c_2^*)$-policy describes an optimal policy (and as a result the conditions written in terms of $H$ and $G$ are both necessary and sufficient for $(c_1^*,c_2^*)$ to be optimal.)
Propositions \ref{verification_1}  and \ref{verification_2}  will play a key role.

By substituting $G(c_1^*,c_2^*) = 0$ in \eqref{v_less_than_c_2}, 
\begin{align*} 
v_{c_1^*,c_2^*} (x) 
= \left\{ \begin{array}{ll} -R^{(q)}(c_2^*-x)  + \gamma(c_1^*,c_2^*)  Z^{(q)}(c_2^*-x), & 0 \leq x < c_2^*, \\x - c_1^* - \beta + \bar{v}_{c_1^*,c_2^*}, & x \geq c_2^*. \end{array} \right.
\end{align*}
In fact, by \eqref{z_below_zero} and by the definition of $\gamma(c_1^*,c_2^*)$ as in \eqref{definition_gamma_G},  we can write for any $x \geq 0$,
\begin{align} \label{value_function}
v_{c_1^*,c_2^*} (x) 
=  -R^{(q)}(c_2^*-x)  + \gamma(c_1^*,c_2^*)  Z^{(q)}(c_2^*-x).
\end{align}
It is clear that it is continuous at $c_2^*$.  Regarding its differentiability, we have
\begin{align} \label{v_prime}
\begin{split}
v_{c_1^*,c_2^*}' (x) 
&=  Z^{(q)}(c_2^*-x)  -  \gamma (c_1^*,c_2^*) q W^{(q)}(c_2^*-x) ,
\end{split}
\end{align}
whose limit equals
\begin{align} \label{v_prime_limit}
v_{c_1^*,c_2^*}' (c_2^*-)
&= 1  - \gamma (c_1^*,c_2^*) q W^{(q)}(0).
\end{align}
Because $v_{c_1^*,c_2^*}' (c_2^*+) = 1$, the differentiability at $c_2^*$ is satisfied if and only if $X$ is of unbounded variation by \eqref{eq:Wq0} and Remark  \ref{remark_gamma_positive}.  We summarize these observations in the lemma below.
\begin{lemma}[smoothness at $c_2^*$] \label{lemma_smoothness_c_2} The function $v_{c_1^*,c_2^*} (\cdot)$ is continuous (resp.\ differentiable) at $c_2^*$ when $X$ is of bounded (resp.\ unbounded) variation.
\end{lemma}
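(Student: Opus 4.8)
The plan is to collapse $v_{c_1^*,c_2^*}$ into the single expression \eqref{value_function}, valid on all of $[0,\infty)$, and then read off continuity and the one-sided derivatives directly. First I would verify that imposing $G(c_1^*,c_2^*)=0$ merges the two branches of the value function: on $(0,c_2^*)$ the term carrying $W^{(q)}(c_2^*-x)/W^{(q)}(c_2^*)$ drops out by the definition \eqref{definition_gamma_G} of $G$, and on $x\ge c_2^*$ one checks, using $Z^{(q)}(c_2^*-x)=1$ and $\overline{Z}^{(q)}(c_2^*-x)=c_2^*-x$ from \eqref{z_below_zero} together with the definition of $\gamma(c_1^*,c_2^*)$, that \eqref{value_function} reproduces the affine piece $x-c_1^*-\beta+\bar{v}_{c_1^*,c_2^*}$. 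This gives one formula to differentiate on both sides of $c_2^*$.

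Continuity at $c_2^*$ is then immediate from \eqref{value_function}: $Z^{(q)}$ is continuous on $\R$ and $R^{(q)}=\overline{Z}^{(q)}-\mu/q$ inherits continuity from $\overline{Z}^{(q)}$, so $v_{c_1^*,c_2^*}$ is continuous everywhere, in particular at $c_2^*$, regardless of the path variation.

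For the derivative I would differentiate \eqref{value_function} on $(0,c_2^*)$ using $\frac{\diff}{\diff y}\overline{Z}^{(q)}(y)=Z^{(q)}(y)$ and $\frac{\diff}{\diff y}Z^{(q)}(y)=qW^{(q)}(y)$, obtaining \eqref{v_prime}. Sending $x\uparrow c_2^*$ and using $Z^{(q)}(0)=1$ gives the left derivative \eqref{v_prime_limit}, $v_{c_1^*,c_2^*}'(c_2^*-)=1-\gamma(c_1^*,c_2^*)\,qW^{(q)}(0)$. For $x>c_2^*$ the argument $c_2^*-x$ is negative, so $W^{(q)}(c_2^*-x)=0$ and $Z^{(q)}(c_2^*-x)=1$, whence $v_{c_1^*,c_2^*}'(c_2^*+)=1$. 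Thus the size of the kink at $c_2^*$ is exactly $\gamma(c_1^*,c_2^*)\,qW^{(q)}(0)$, and smoothness holds iff this quantity vanishes.

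The concluding step uses \eqref{eq:Wq0}. In the unbounded-variation case $W^{(q)}(0)=0$, so the two one-sided derivatives coincide and $v_{c_1^*,c_2^*}$ is differentiable at $c_2^*$. In the bounded-variation case $W^{(q)}(0)=1/\mathfrak{d}>0$ and $q>0$, so the kink is nonzero precisely when $\gamma(c_1^*,c_2^*)\neq0$; here I would invoke Remark \ref{remark_gamma_positive}, whose hypotheses $G(c_1^*,c_2^*)=0$ and $H(c_1^*,c_2^*)\ge0$ are the standing optimality conditions, to conclude $\gamma(c_1^*,c_2^*)>0$ and hence genuine non-differentiability (only continuity). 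The one non-routine ingredient is this strict positivity of $\gamma(c_1^*,c_2^*)$: it is what excludes an accidental matching of the one-sided slopes and thereby fixes the dichotomy, whereas all the differentiation is mechanical once \eqref{value_function} is in hand.
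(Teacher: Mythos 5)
Your proposal is correct and follows essentially the same route as the paper: substituting $G(c_1^*,c_2^*)=0$ to collapse both branches into the single formula \eqref{value_function}, reading off continuity, computing the one-sided derivatives $v_{c_1^*,c_2^*}'(c_2^*-)=1-\gamma(c_1^*,c_2^*)\,qW^{(q)}(0)$ and $v_{c_1^*,c_2^*}'(c_2^*+)=1$, and settling the dichotomy via \eqref{eq:Wq0} together with $\gamma(c_1^*,c_2^*)>0$ from Remark \ref{remark_gamma_positive}. You are in fact slightly more explicit than the paper in verifying that \eqref{value_function} reproduces the affine piece on $[c_2^*,\infty)$ and in flagging the strict positivity of $\gamma$ as the ingredient ruling out accidental smooth fit in the bounded-variation case, but the argument is the same.
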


\begin{remark}
Differentiating \eqref{v_prime} further,
\begin{align}
v_{c_1^*,c_2^*}'' (x) 
&= - q W^{(q)}(c_2^*-x)  + \gamma (c_1^*,c_2^*) q W^{(q)'}(c_2^*-x),  \label{v_double_prime}
\end{align}
for a.e.\ $x \in (0,c_2^*)$ and its limit as $x \uparrow c_2^*$ equals
\begin{align} \label{v_double_prime_limit}
v_{c_1^*,c_2^*}'' (c_2^*-) 
&=  - q W^{(q)}(0)  + \gamma (c_1^*,c_2^*) q W^{(q)'}(0+).
\end{align}
These results on the second derivative are used in deriving Propositions \ref{verification_1}  and  \ref{verification_2}  below.
\end{remark}

By Remark \ref{remark_smoothness_zero}(1) and Lemma \ref{lemma_smoothness_c_2}, the function $v_{c_1^*,c_2^*}$ is $C^0(0,\infty)$ and $C^1((0,\infty) \backslash \{ c_2^*\}) $ (resp.\ $C^1(0,\infty)$ and  $C^2((0,\infty) \backslash \{ c_2^*\}) $) when $X$ is of bounded (resp.\ unbounded) variation.   

Let $\mathcal{L}$ be the infinitesimal generator associated with
the process $X$ applied to a sufficiently smooth function $f$
\begin{align*}
\mathcal{L} f(x) &:= -c f'(x) + \frac 1 2 \sigma^2 f''(x) + \int_0^\infty \left[ f(x+z) - f(x) -  f'(x) z 1_{\{0 < z < 1\}} \right] \nu(\diff z).
\end{align*}
Here $\mathcal{L} v_{c_1^*,c_2^*} (\cdot)$ makes sense anywhere on $(0,\infty) \backslash \{ c_2^*\}$.  

\begin{proposition} \label{verification_1} 
\begin{enumerate}
\item $(\mathcal{L}-q) v_{c_1^*,c_2^*}(x) = 0$ for $0 < x < c_2^*$,
\item $(\mathcal{L}-q) v_{c_1^*,c_2^*}(x) \leq 0$ for $x > c_2^*$.
\end{enumerate}
\end{proposition}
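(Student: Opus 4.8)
The plan is to treat the two regions separately, exploiting the explicit representation \eqref{value_function}, which expresses $v_{c_1^*,c_2^*}$ through the scale-function objects $R^{(q)}$ and $Z^{(q)}$ by a single formula valid for all $x\ge 0$. For part (1) I would show that $(\mathcal{L}-q)$ annihilates each of the two building blocks $R^{(q)}(c_2^*-\cdot)$ and $Z^{(q)}(c_2^*-\cdot)$ on $(0,c_2^*)$; for part (2) I would use that on $(c_2^*,\infty)$ the formula \eqref{value_function} collapses, by \eqref{z_below_zero}, to the affine function $v_{c_1^*,c_2^*}(x)=x-c_1^*-\beta+\bar v_{c_1^*,c_2^*}$, for which $(\mathcal{L}-q)v_{c_1^*,c_2^*}$ can be computed in closed form and shown to be nonpositive using $\gamma(c_1^*,c_2^*)>0$ from Remark \ref{remark_gamma_positive}.

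For part (1) the cleanest route is to pass to the dual spectrally negative process $-X$, whose generator $\mathcal{A}$ satisfies $(\mathcal{L}-q)h(x)=(\mathcal{A}-q)\tilde h(-x)$ with $\tilde h(y):=h(-y)$ (seen at once from $\E_x[h(X_t)]=\E_{-x}[\tilde h(-X_t)]$), and whose scale functions coincide with $W^{(q)},Z^{(q)},\overline{Z}^{(q)}$. Translation invariance of $\mathcal{A}$ together with the standard identities $(\mathcal{A}-q)Z^{(q)}(u)=0$ and $(\mathcal{A}-q)\overline{Z}^{(q)}(u)=\psi'(0+)=-\mu$ for $u>0$ then give, for $0<x<c_2^*$ (so that $c_2^*-x>0$), both $(\mathcal{L}-q)Z^{(q)}(c_2^*-x)=0$ and $(\mathcal{L}-q)\overline{Z}^{(q)}(c_2^*-x)=-\mu$. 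Since $(\mathcal{L}-q)$ sends the constant $-\mu/q$ appearing in $R^{(q)}=\overline{Z}^{(q)}-\mu/q$ to $+\mu$, the two contributions cancel, so $(\mathcal{L}-q)R^{(q)}(c_2^*-x)=0$ and hence $(\mathcal{L}-q)v_{c_1^*,c_2^*}(x)=0$ on $(0,c_2^*)$. The one subtlety to flag is that the nonlocal part of $\mathcal{L}$ samples $v_{c_1^*,c_2^*}(x+z)$ for all $z>0$, i.e.\ it reaches into $[c_2^*,\infty)$; this causes no difficulty precisely because \eqref{value_function} is one and the same expression on all of $[0,\infty)$, which is exactly what the global scale-function identities require.

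For part (2), on $(c_2^*,\infty)$ the function is affine with slope one, so $v_{c_1^*,c_2^*}'=1$ and $v_{c_1^*,c_2^*}''=0$, and every upward jump keeps the argument in $(c_2^*,\infty)$, where $v_{c_1^*,c_2^*}(x+z)=x+z-c_1^*-\beta+\bar v_{c_1^*,c_2^*}$. Substituting into $\mathcal{L}$ collapses the compensated integral to $\int_{[1,\infty)}z\,\nu(\diff z)$, giving $\mathcal{L} v_{c_1^*,c_2^*}(x)=-c+\int_{[1,\infty)}z\,\nu(\diff z)=\mu$, where the last equality follows by differentiating \eqref{laplace_spectrally_positive} at $0$ and finiteness is guaranteed by Assumption \ref{assump_finiteness_mu}. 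Consequently $(\mathcal{L}-q)v_{c_1^*,c_2^*}(x)=\mu-q\,v_{c_1^*,c_2^*}(x)$, which is strictly decreasing in $x$; its supremum over $x>c_2^*$ is attained as $x\downarrow c_2^*$, where, using the definition of $\gamma(c_1^*,c_2^*)$ in \eqref{definition_gamma_G}, it equals $-q\,\gamma(c_1^*,c_2^*)$. By Remark \ref{remark_gamma_positive}, $\gamma(c_1^*,c_2^*)>0$, so this limiting value is negative and therefore $(\mathcal{L}-q)v_{c_1^*,c_2^*}(x)\le 0$ throughout $(c_2^*,\infty)$.

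The main obstacle is part (1): making rigorous that $(\mathcal{L}-q)$ annihilates the scale-function blocks, i.e.\ correctly handling the nonlocal integral operator (which forces one to use the globally defined $Z^{(q)}$ and $\overline{Z}^{(q)}$ and to know their generator action on the whole half-line, not just pointwise) together with the limited smoothness of $v_{c_1^*,c_2^*}$ — only $C^0$ at $c_2^*$ in the bounded-variation case and $C^1$ in the unbounded-variation case — which is why the identity is asserted only on $(0,\infty)\setminus\{c_2^*\}$. An alternative to the dual-generator computation is probabilistic: in the interior the controlled process agrees with $X$ and no dividends are paid, so $e^{-q(t\wedge\tau_0^-\wedge\tau_{c_2^*}^+)}v_{c_1^*,c_2^*}(X_{t\wedge\tau_0^-\wedge\tau_{c_2^*}^+})$ is a martingale, and the It\^o--Dynkin formula yields $(\mathcal{L}-q)v_{c_1^*,c_2^*}=0$; this, however, trades the analytic bookkeeping for the need to justify It\^o's formula under the same restricted smoothness.
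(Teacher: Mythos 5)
Your proof is correct. For part (1) you take essentially the paper's route: the paper likewise reduces the claim, via \eqref{value_function}, to showing that $(\mathcal{L}-q)$ annihilates $Z^{(q)}(c_2^*-\cdot)$ and $R^{(q)}(c_2^*-\cdot)$ on $(0,c_2^*)$, which it obtains from the martingale property of the stopped discounted processes (Proposition 2 of \cite{Avram_et_al_2007} and the proof of Theorem 8.10 of \cite{Kyprianou_2006}) combined with the smoothness in Remark \ref{remark_smoothness_zero}(1); your dual-generator identities $(\mathcal{A}-q)Z^{(q)}=0$ and $(\mathcal{A}-q)\overline{Z}^{(q)}=\psi'(0+)$ on $(0,\infty)$ are exactly this content from the same sources, and the ``alternative'' martingale argument you sketch at the end is precisely the paper's proof. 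Part (2) is where you genuinely diverge. The paper does \emph{not} compute $\mathcal{L}v_{c_1^*,c_2^*}$ on $(c_2^*,\infty)$ explicitly; it splits into the bounded- and unbounded-variation cases and deduces $(\mathcal{L}-q)v_{c_1^*,c_2^*}(c_2^*+)<0$ from the fit behaviour at $c_2^*$ --- the strict drop of $v_{c_1^*,c_2^*}'$ across $c_2^*$ (via \eqref{v_prime_limit}) in the bounded-variation case, and the strict drop of $v_{c_1^*,c_2^*}''$ (via \eqref{v_double_prime_limit}) together with Lemma \ref{lemma_smoothness_c_2} in the unbounded-variation case --- before invoking, as you do, that $\mathcal{L}v_{c_1^*,c_2^*}$ is constant and $qv_{c_1^*,c_2^*}$ increasing beyond $c_2^*$. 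Your direct computation --- $\mathcal{L}v_{c_1^*,c_2^*}\equiv\mu$ on $(c_2^*,\infty)$, finite by Assumption \ref{assump_finiteness_mu}, hence $(\mathcal{L}-q)v_{c_1^*,c_2^*}(x)=\mu-q\,v_{c_1^*,c_2^*}(x)$ with supremum $-q\,\gamma(c_1^*,c_2^*)<0$ by Remark \ref{remark_gamma_positive} --- reaches the same conclusion with no case distinction, no use of the fit conditions, and an explicit formula for the generator on the dividend region. It also sidesteps the one delicate point in the paper's version: when $\sigma=0$ and $X$ has unbounded variation, $W^{(q)'}(0+)=\infty$, so the asserted drop of the generator across $c_2^*$ is transmitted through the compensated jump integral rather than a Gaussian term, and the inference $(\mathcal{L}-q)v_{c_1^*,c_2^*}(c_2^*+)<0$ needs more care than the paper's one-line statement suggests; your argument never takes that limit. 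What the paper's route buys in exchange is economy within its own structure: Lemma \ref{lemma_smoothness_c_2} and the limits \eqref{v_prime_limit}, \eqref{v_double_prime_limit} are needed anyway for the verification in Theorem \ref{theorem_main}, so reusing them there costs nothing.
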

\begin{proof}
(1) By Proposition 2 of \cite{Avram_et_al_2007} and as in the proof of Theorem 8.10 of \cite{Kyprianou_2006}, the processes 
\begin{align*}
e^{-q (t \wedge \tau^-_0 \wedge \tau^+_{c_2^*})} Z^{(q)}(X_{t \wedge \tau^-_0 \wedge \tau^+_{c_2^*}}) \quad \textrm{and} \quad e^{-q (t \wedge \tau^-_0 \wedge \tau^+_{c_2^*})} R^{(q)}(X_{t \wedge \tau^-_0 \wedge \tau^+_{c_2^*}}), \quad t \geq 0,
\end{align*}
are martingales. Thanks to the smoothness of $Z^{(q)}$ and $R^{(q)}$ on $(0,c_2^*)$ (see Remark \ref{remark_smoothness_zero}(1)), we obtain $(\mathcal{L}-q) R^{(q)}(y) = (\mathcal{L}-q) Z^{(q)}(y) = 0$ for any $0 < y < c_2^*$. This step is similar to the proof of  Theorem 2.1 in \cite{Bayraktar_2012}.  This implies claim (1) in view of \eqref{value_function}.

\noindent (2)  Suppose $X$ is of bounded variation.  By \eqref{v_prime_limit} and  Remarks \ref{remark_smoothness_zero}(2) and \ref{remark_gamma_positive}, 
\begin{align*} 
v_{c_1^*,c_2^*}' (c_2^*-) < 1 = v_{c_1^*,c_2^*}' (c_2^*+). 
\end{align*}
Because $\sigma = 0$, $\mathfrak{d} > 0$ and $v_{c_1^*,c_2^*}(\cdot)$ is continuous at $c_2^*$, (1) implies $(\mathcal{L}-q) v_{c_1^*,c_2^*}(c_2^*+) < 0$.  Because, on $(c_2^*,\infty)$, $\mathcal{L} v_{c_1^*,c_2^*}$ is a constant and $qv_{c_1^*,c_2^*}$ is increasing in view of \eqref{value_function}, claim (2) follows for the bounded variation case.

Suppose $X$ is of unbounded variation.  By \eqref{v_double_prime_limit} and  Remarks \ref{remark_smoothness_zero}(2) and \ref{remark_gamma_positive},
\begin{align*} 
v_{c_1^*,c_2^*}'' (c_2^*-) 
&=  \gamma (c_1^*,c_2^*) q W^{(q)'}(0+) > 0  = v_{c_1^*,c_2^*}'' (c_2^*+).
\end{align*}
Because $v_{c_1^*,c_2^*}$ is differentiable at $c_2^*$, we must have $(\mathcal{L}-q) v_{c_1^*,c_2^*}(c_2^*+) < 0$.  Again, because $(\mathcal{L}-q)v_{c_1^*,c_2^*}$ is decreasing on $(c_2^*,\infty)$, (2) is proved for the unbounded variation case as well.
\end{proof}

\begin{proposition} \label{verification_2} 
For any $x > y \geq 0$, it holds that $v_{c_1^*,c_2^*} (x) -  v_{c_1^*,c_2^*}(y) 
 \geq x-y - \beta$.
\end{proposition}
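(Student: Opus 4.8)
The plan is to introduce $h(x) := v_{c_1^*,c_2^*}(x) - x$, so that the assertion $v_{c_1^*,c_2^*}(x) - v_{c_1^*,c_2^*}(y) \ge x-y-\beta$ is equivalent to $h(x) - h(y) \ge -\beta$ for all $x > y \ge 0$. Since $v_{c_1^*,c_2^*}$ is given by the closed form \eqref{value_function} and is therefore absolutely continuous on compact sets (with $h$ continuous at $c_2^*$ by Lemma \ref{lemma_smoothness_c_2}), it suffices to pin down the sign of $h'$ and then bound $\int_y^x h'(t)\,\diff t$ from below.

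I would first compute $h'$. Using \eqref{v_prime} and the identity $Z^{(q)}(y) - 1 = q\overline{W}^{(q)}(y)$, for $0 < x < c_2^*$ one gets $h'(x) = v_{c_1^*,c_2^*}'(x) - 1 = q\,\phi(c_2^*-x)$, where $\phi(u) := \overline{W}^{(q)}(u) - \gamma\,W^{(q)}(u)$ and $\gamma := \gamma(c_1^*,c_2^*)$, while \eqref{v_above_c_2} gives $h'(x) = 0$ for $x > c_2^*$. The crucial structural fact is that $\phi$ is quasiconvex: since $\phi'(u) = W^{(q)'}(u)\big[W^{(q)}(u)/W^{(q)'}(u) - \gamma\big]$ and $W^{(q)}/W^{(q)'}$ is nondecreasing by Remark \ref{remark_smoothness_zero}(3), $\phi'$ changes sign at most once, from negative to positive. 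Two anchor values are available: $\phi(0) = -\gamma\,W^{(q)}(0) \le 0$ by \eqref{eq:Wq0} and Remark \ref{remark_gamma_positive}, and $\phi(c_2^*-c_1^*) = -H(c_1^*,c_2^*)/q$ by the definition of $H$ in Lemma \ref{lemma_derivative_c_1}.

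Next I would read off the sign of $h'$ from Proposition \ref{proposition_existence}. In Case 1 ($c_1^* > 0$, $H(c_1^*,c_2^*) = 0$) one has $\phi(c_2^*-c_1^*) = 0$; combined with $\phi(0) \le 0$ and quasiconvexity, this forces $\phi \le 0$ on $(0,c_2^*-c_1^*)$ and $\phi \ge 0$ on $(c_2^*-c_1^*,c_2^*)$, so that, via $u = c_2^*-x$, $h' \ge 0$ on $(0,c_1^*)$ and $h' \le 0$ on $(c_1^*,c_2^*)$. (The only alternative compatible with quasiconvexity, namely $\phi \equiv 0$ on $[0,c_2^*-c_1^*]$, would make $h$ constant on $[c_1^*,c_2^*]$ and contradict the evaluation $h(c_2^*)-h(c_1^*) = -\beta < 0$ computed below, so it does not occur.) In Case 2 ($c_1^* = 0$, $H(0,c_2^*) \ge 0$) both anchors are nonpositive, $\phi(0) \le 0$ and $\phi(c_2^*) = -H(0,c_2^*)/q \le 0$, so quasiconvexity yields $\phi \le 0$ on all of $(0,c_2^*)$, i.e., $h' \le 0$ on $(0,c_2^*)$. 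In either case $h' \ge 0$ on $(0,c_1^*)\cup(c_2^*,\infty)$ and $h' \le 0$ on $(c_1^*,c_2^*)$.

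Finally I would integrate. Because $\{h' < 0\} \subseteq [c_1^*,c_2^*]$ and $h' \le 0$ throughout $[c_1^*,c_2^*]$, for any $x > y \ge 0$ a comparison of $\int_y^x h'$ with $\int_{c_1^*}^{c_2^*} h'$—adding the nonnegative contribution of $h'$ on $[y,x]\setminus[c_1^*,c_2^*]$ and subtracting the nonpositive contribution on $[c_1^*,c_2^*]\setminus[y,x]$—gives $h(x) - h(y) = \int_y^x h'(t)\,\diff t \ge \int_{c_1^*}^{c_2^*} h'(t)\,\diff t = h(c_2^*) - h(c_1^*)$. It then remains to evaluate the right-hand side: since $h(c_1^*) = \bar{v}_{c_1^*,c_2^*} - c_1^*$ and, by \eqref{v_above_c_2}, $h(c_2^*) = v_{c_1^*,c_2^*}(c_2^*) - c_2^* = \bar{v}_{c_1^*,c_2^*} - c_1^* - \beta$, we obtain $h(c_2^*) - h(c_1^*) = -\beta$, which is precisely the desired inequality. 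The main obstacle is the sign analysis of the third paragraph: one must extract the exact sign pattern of $\phi$ from quasiconvexity together with the first-order conditions $G(c_1^*,c_2^*) = 0$ and $H(c_1^*,c_2^*) = 0$ (or $\ge 0$), and handle the bounded- and unbounded-variation regimes uniformly through the single sign of $\gamma(c_1^*,c_2^*)$.
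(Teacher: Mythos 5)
Your proposal is correct and takes essentially the same route as the paper: the quasiconvexity of $\phi$ (from monotonicity of $W^{(q)'}/W^{(q)}$, Remark \ref{remark_smoothness_zero}(3)) is precisely the paper's concave-then-convex analysis of $v_{c_1^*,c_2^*}$ in Lemma \ref{lemma_derivative_bounded}, rewritten in the variable $u=c_2^*-x$ (note $v_{c_1^*,c_2^*}''(x)=-q\,\phi'(c_2^*-x)$), with the same two anchors --- $\gamma(c_1^*,c_2^*)>0$ controlling the slope at $c_2^*-$ and $H(c_1^*,c_2^*)$ giving the slope at $c_1^*$ via Lemma \ref{lemma_v_derivative_c_1}. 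Your final integration comparison is a spelled-out version of the paper's concluding display that $\inf_{x>y}\left[v_{c_1^*,c_2^*}(x)-v_{c_1^*,c_2^*}(y)-(x-y-\beta)\right]$ is attained at $(y,x)=(c_1^*,c_2^*)$, where it equals zero.
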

In order to show this proposition, we take advantage of the slope of $v_{c_1^*,c_2^*}$ at $c_1^*$.
By \eqref{v_prime}, 
\begin{align*}
v_{c_1^*,c_2^*}' (c_1^*) 
=  Z^{(q)}(c_2^*-c_1^*)  -  \gamma (c_1^*,c_2^*) q W^{(q)}(c_2^*-c_1^*) =  1- H(c_1^*,c_2^*).
\end{align*}
When $c_1^* = 0$, the derivative is understood as the right-derivative.
Hence we arrive at the following.
%

\begin{lemma}[slope at $c_1^*$]  \label{lemma_v_derivative_c_1}
For both \textbf{Cases 1} and \textbf{2}, $v_{c_1^*,c_2^*}'(c_1^*+)\leq 1$.  In particular, for \textbf{Case 1}, $v_{c_1^*,c_2^*}'(c_1^*)=1$.
\end{lemma}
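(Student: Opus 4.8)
The plan is to read the slope off the closed-form derivative that has already been assembled just before the statement. There it is shown, by substituting $x = c_1^*$ into \eqref{v_prime} and combining $Z^{(q)}(y) = 1 + q\overline{W}^{(q)}(y)$ with the definition of $H$ from Lemma \ref{lemma_derivative_c_1}, that
\[
v_{c_1^*,c_2^*}'(c_1^*+) = Z^{(q)}(c_2^*-c_1^*) - \gamma(c_1^*,c_2^*)\, q W^{(q)}(c_2^*-c_1^*) = 1 - H(c_1^*,c_2^*).
\]
Consequently the entire lemma reduces to reading off the sign of $H(c_1^*,c_2^*)$ from the two cases isolated in Proposition \ref{proposition_existence}.

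First I would record that $c_2^* > c_1^*$, which is guaranteed by Lemma \ref{lemma_G_limit} (through $G(c_1^*,c_2^*)=0$) and Remark \ref{remark_gamma_positive}. Hence the common argument $c_2^* - c_1^*$ is strictly positive, so $Z^{(q)}$ and $W^{(q)}$ above are evaluated at a point where they are continuous; the sole reason to write $c_1^*+$ is the boundary convention for Case 2, where $c_1^* = 0$ and only the right-derivative of $x \mapsto v_{c_1^*,c_2^*}(x)$ on $[0,\infty)$ is meaningful. With this understood, the displayed identity holds with the right-derivative on the left-hand side.

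Then I would split into the two cases. In \textbf{Case 1} ($c_1^* > 0$ with $H(c_1^*,c_2^*) = 0$), the identity yields $v_{c_1^*,c_2^*}'(c_1^*) = 1 - 0 = 1$, which gives both the inequality and the sharper equality assertion. In \textbf{Case 2} ($c_1^* = 0$ with $H(0,c_2^*) \geq 0$), the identity yields $v_{c_1^*,c_2^*}'(0+) = 1 - H(0,c_2^*) \leq 1$. Combining the two cases gives $v_{c_1^*,c_2^*}'(c_1^*+) \leq 1$ throughout, with equality in Case 1. I do not anticipate a genuine obstacle: the entire content is packaged in the pre-computed derivative and in the defining properties of the maximizers from Proposition \ref{proposition_existence}, so the only point needing a word of care is the right-derivative convention at the boundary $c_1^* = 0$, handled as above.
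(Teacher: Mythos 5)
Your proposal is correct and follows essentially the same route as the paper: the paper's entire proof is the displayed identity $v_{c_1^*,c_2^*}'(c_1^*) = Z^{(q)}(c_2^*-c_1^*) - \gamma(c_1^*,c_2^*)\, q W^{(q)}(c_2^*-c_1^*) = 1 - H(c_1^*,c_2^*)$ obtained from \eqref{v_prime}, followed by reading off the sign of $H(c_1^*,c_2^*)$ in the two cases of Proposition \ref{proposition_existence}, exactly as you do. Your extra remarks (strict positivity of $c_2^*-c_1^*$ and the right-derivative convention at $c_1^*=0$) are consistent with the paper's own conventions and do not change the argument.
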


\begin{lemma} \label{lemma_derivative_bounded}
For any $x \in (0,\infty) \backslash \{c_2^*\}$, $v_{c_1^*,c_2^*}'(x) < 1$ if and only if $x \in (c_1^*,c_2^*)$. 
\end{lemma}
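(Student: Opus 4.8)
The plan is to reduce the claim to the strict monotonicity of the single ratio
\[
\rho(y) := \frac{\overline{W}^{(q)}(y)}{W^{(q)}(y)}, \qquad y > 0,
\]
and then to locate the sign change of $v_{c_1^*,c_2^*}'-1$ from the conditions satisfied by $(c_1^*,c_2^*)$. First I would rewrite \eqref{v_prime}: substituting $Z^{(q)}=1+q\overline{W}^{(q)}$ gives, for $0<x<c_2^*$,
\[
v_{c_1^*,c_2^*}'(x)-1 = q\bigl[\overline{W}^{(q)}(c_2^*-x)-\gamma(c_1^*,c_2^*)W^{(q)}(c_2^*-x)\bigr] = -q\,W^{(q)}(c_2^*-x)\bigl(\gamma(c_1^*,c_2^*)-\rho(c_2^*-x)\bigr).
\]
Since $W^{(q)}(c_2^*-x)>0$ and $\gamma(c_1^*,c_2^*)>0$ (Remark \ref{remark_gamma_positive}), on $(0,c_2^*)$ we have $v_{c_1^*,c_2^*}'(x)<1$ if and only if $\rho(c_2^*-x)<\gamma(c_1^*,c_2^*)$; for $x>c_2^*$ the function is affine with slope $1$ (see \eqref{v_above_c_2}), so those $x$ are automatically excluded. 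Writing $y^*:=c_2^*-c_1^*$, the definition of $H$ moreover gives $H(c_1^*,c_2^*)=q\,W^{(q)}(y^*)\bigl(\gamma(c_1^*,c_2^*)-\rho(y^*)\bigr)$, so that the sign of $\gamma(c_1^*,c_2^*)-\rho(y^*)$ coincides with that of $H(c_1^*,c_2^*)$.

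The crux, and the step I expect to be the main obstacle, is to prove that $\rho$ is strictly increasing on $(0,\infty)$. Writing $m:=W^{(q)'}/W^{(q)}$ (right-derivatives where needed) and using $\overline{W}^{(q)\prime}=W^{(q)}$, a direct computation gives $\rho'(y)=1-m(y)\rho(y)$. The log-concavity in Remark \ref{remark_smoothness_zero}(3) states that $m$ is nonincreasing, so $W^{(q)'}(w)\ge m(y)W^{(q)}(w)$ for $0<w<y$; integrating over $(0,y)$ yields $W^{(q)}(y)-W^{(q)}(0)\ge m(y)\overline{W}^{(q)}(y)$, and hence $\rho'(y)\ge W^{(q)}(0)/W^{(q)}(y)$. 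In the bounded-variation case $W^{(q)}(0)=1/\mathfrak{d}>0$ by \eqref{eq:Wq0}, so $\rho'>0$ and strict monotonicity follows at once. In the unbounded-variation case $W^{(q)}(0)=0$, and this bound only gives $\rho'\ge 0$; here I would use that $m(0+)=\infty$ (because $W^{(q)}(0)=0$ while $W^{(q)'}(0+)\in(0,\infty]$ by \eqref{eq:Wqp0}), so $m(w)>m(y)$ on a right-neighbourhood of $0$. Together with $W^{(q)}>0$, this upgrades the integrated inequality to the strict $W^{(q)}(y)>m(y)\overline{W}^{(q)}(y)$, whence $\rho'(y)>0$.

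Finally I would combine these ingredients. By Proposition \ref{proposition_existence}, $G(c_1^*,c_2^*)=0$ and $H(c_1^*,c_2^*)\ge 0$, with equality in \textbf{Case 1}. In \textbf{Case 1} ($0<c_1^*<c_2^*$) we thus have $\rho(y^*)=\gamma(c_1^*,c_2^*)$, so strict monotonicity of $\rho$ gives $\rho(c_2^*-x)<\gamma(c_1^*,c_2^*)$ exactly when $c_2^*-x<y^*$, i.e.\ $x>c_1^*$; hence $v'<1$ on $(c_1^*,c_2^*)$ and $v'>1$ on $(0,c_1^*)$. In \textbf{Case 2} ($c_1^*=0$, $y^*=c_2^*$) we have $\rho(c_2^*)\le\gamma(c_1^*,c_2^*)$, so for every $x\in(0,c_2^*)$ strict monotonicity gives $\rho(c_2^*-x)<\rho(c_2^*)\le\gamma(c_1^*,c_2^*)$, i.e.\ $v'<1$, while there is no $x<c_1^*=0$ to consider. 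Recalling $v'\equiv 1$ on $(c_2^*,\infty)$, this yields $v_{c_1^*,c_2^*}'(x)<1$ if and only if $x\in(c_1^*,c_2^*)$, as claimed.
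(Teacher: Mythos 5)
Your proof is correct, but it takes a genuinely different route from the paper's. The paper argues through the \emph{second} derivative: rewriting \eqref{v_double_prime} as \eqref{second_derivative_of_v}, it uses the log-concavity of $W^{(q)}$ (Remark \ref{remark_smoothness_zero}(3)) to show that $J$ is decreasing, so $v_{c_1^*,c_2^*}$ is concave-then-convex on $(0,c_2^*)$; it then combines this shape information with the boundary slopes $v_{c_1^*,c_2^*}'(c_1^*)=1$ (resp.\ $v_{0,c_2^*}'(0+)\leq 1$) from Lemma \ref{lemma_v_derivative_c_1} and $v_{c_1^*,c_2^*}'(c_2^*-)\leq 1$ from \eqref{v_prime_limit} and Remark \ref{remark_gamma_positive}, running through the three possible concavity patterns. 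You instead stay at the level of the \emph{first} derivative, factoring $v_{c_1^*,c_2^*}'(x)-1=-q\,W^{(q)}(c_2^*-x)\bigl(\gamma(c_1^*,c_2^*)-\rho(c_2^*-x)\bigr)$ with $\rho=\overline{W}^{(q)}/W^{(q)}$, and reducing everything to the strict monotonicity of $\rho$ — a single-crossing argument in which the conditions $H(c_1^*,c_2^*)=0$ (resp.\ $H(0,c_2^*)\geq 0$) place $\gamma$ exactly at (resp.\ above) $\rho(c_2^*-c_1^*)$. Both proofs draw on the same structural input, namely log-concavity of the scale function, but the trade-off is clear: the paper's shape analysis needs only that $W^{(q)\prime}/W^{(q)}$ is nonincreasing, at the price of the case distinctions (i)--(iii) and the separate boundary-slope lemma; your argument dispenses with the case analysis and with Lemma \ref{lemma_v_derivative_c_1} (the identity $v_{c_1^*,c_2^*}'(c_1^*)=1-H(c_1^*,c_2^*)$ falls out of your factorization), but it must pay for this with the strictness of $\rho'>0$, where your handling of the unbounded-variation case — using $W^{(q)}(0)=0$ together with $W^{(q)\prime}(0+)>0$ from \eqref{eq:Wq0} and \eqref{eq:Wqp0} to force $W^{(q)\prime}/W^{(q)}\to\infty$ at the origin and thereby make the integrated log-concavity inequality strict — is exactly the right care to take, and is correct as written (note also that the inequality $\int_0^y W^{(q)\prime}(w)\,\diff w \leq W^{(q)}(y)-W^{(q)}(0)$ for the increasing function $W^{(q)}$ goes in the direction your argument needs, so possible non-differentiability points cause no trouble). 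Your observation that the sign of $v_{c_1^*,c_2^*}'-1$ is governed by a monotone ratio also gives marginally more information than the lemma asks for: it shows the slope exceeds one strictly on $(0,c_1^*)$ with a quantitative gap.
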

\begin{proof}
Because $v_{c_1^*,c_2^*}' (x) = 1$ on $(c_2^*, \infty)$, we shall focus on $x \in (0,c_2^*)$.  Rewriting \eqref{v_double_prime}, 
\begin{align} 
v_{c_1^*,c_2^*}''(x)   
&= -q  W^{(q)}(c_2^*-x) J(x), \quad 0 < x < c_2^*, \label{second_derivative_of_v}
\end{align}
where $J(x) :=  1 -  \gamma(c_1^*,c_2^*) \frac {W^{(q)'}(c_2^*-x)} { W^{(q)}(c_2^*-x)}$.  By Remarks \ref{remark_smoothness_zero}(3) and  \ref{remark_gamma_positive}, $J(\cdot)$ is decreasing on $(0,c_2^*)$, and hence there exists a unique level $\bar{c} \in [0,c_2^*]$ such that \eqref{second_derivative_of_v} is negative if and only if $x < \bar{c}$.   In other words, there are three possibilities 
\begin{enumerate}
\item[(i)] $v_{c_1^*,c_2^*}$ is strictly concave on $(0,c_2^*)$ or
\item[(ii)] $v_{c_1^*,c_2^*}$ is strictly concave on  $(0, \bar{c})$ and strictly convex on  $(\bar{c},c_2^*)$,
\item[(iii)] $v_{c_1^*,c_2^*}$ is strictly convex on $(0,c_2^*)$.
\end{enumerate}

\textbf{Case 1}:  Suppose $c_1^* > 0$ with $H(c_1^*,c_2^*) = 0$. 
By Lemma \ref{lemma_v_derivative_c_1}, \eqref{v_prime_limit} and Remark  \ref{remark_gamma_positive},
\begin{align} 
v_{c_1^*,c_2^*}' (c_2^*-)
 \leq 1 = v_{c_1^*,c_2^*}'(c_1^*). \label{v_prime_relation}
\end{align}
Therefore we can safely rule out (iii) and we must have either (i) or (ii)  with $c_1^* < \bar{c} < c_2^*$.  For (i) (thus  $v_{c_1^*,c_2^*}'$ is decreasing on $(0,c_2^*)$), given $x \in (0,c_2^*)$, $v_{c_1^*,c_2^*}'(x) < 1$ if and only if $x \in (c_1^*,c_2^*)$.  Now suppose (ii) with  $c_1^* < \bar{c} < c_2^*$.  Then by the concavity on $(0,\bar{c})$ and $1 = v_{c_1^*,c_2^*}'(c_1^*)$, we have $v_{c_1^*,c_2^*}' > 1$ on $(0,c_1^*)$ and $v_{c_1^*,c_2^*}' < 1$ on $(c_1^*, \bar{c})$.  For $x \in (\bar{c},c_2^*)$, by the convexity on $(\bar{c}, c_2^*)$ and \eqref{v_prime_relation}, $1 \geq v_{c_1^*,c_2^*}' (c_2^*-) \geq v_{c_1^*,c_2^*}' (x)$.

\textbf{Case 2}: Suppose $c_1^* = 0$ with $H(0,c_2^*) \geq 0$.  
In view of \eqref{v_prime} and the definition of $H(0,c_2^*)$, we must have that $v'_{0,c_2^*}(0+) \leq 1$.  This together with $v'_{0,c_2^*}(c_2^*-) \leq 1$ shows that  $v'_{0,c_2^*}(x) < 1$ on $(0,c_2^*)$ for any of (i), (ii) and (iii).
\end{proof}

By Lemma \ref{lemma_derivative_bounded}, 
\begin{align*}
\inf_{x > y} [v_{c_1^*,c_2^*} (x) -  v_{c_1^*,c_2^*}(y) 
- (x-y - \beta)] = v_{c_1^*,c_2^*} (c_2^*) -  v_{c_1^*,c_2^*}(c_1^*) 
- (c_2^*-c_1^* - \beta) = 0,
\end{align*}
and as a result the claim in Proposition \ref{verification_2} follows immediately.

Next, we will verify the optimality of the $(c_1^*,c_2^*)$-policy.

\begin{theorem} \label{theorem_main}We have  $v_{c_1^*,c_2^*}(x)=\sup_{\pi\in \Pi}v_\pi(x)$ for every $x \geq 0$ and the $(c_1^*,c_2^*)$-policy is optimal.
\end{theorem}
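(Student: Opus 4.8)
The plan is to use a standard verification argument from stochastic control. The candidate value function $v_{c_1^*,c_2^*}$ has been constructed so that Proposition \ref{verification_1} and Proposition \ref{verification_2} capture exactly the two inequalities one expects from a Hamilton--Jacobi--Bellman quasi-variational inequality associated with an impulse control problem: Proposition \ref{verification_1} encodes the condition $(\mathcal{L}-q)v \leq 0$ on the continuation region and equality where no intervention is optimal, while Proposition \ref{verification_2} encodes that the value dominates the intervention operator, namely $v(x) \geq \sup_{y < x}[v(y) + (x-y) - \beta]$. The theorem will follow once we establish both the lower bound $v_{c_1^*,c_2^*} \leq v$ (which is immediate, since $\pi_{c_1^*,c_2^*}$ is itself an admissible strategy, so $v_{c_1^*,c_2^*}(x) = v_{\pi_{c_1^*,c_2^*}}(x) \leq v(x)$) and the upper bound $v_{c_1^*,c_2^*} \geq v_\pi$ for every admissible $\pi \in \Pi$.

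First I would fix an arbitrary admissible strategy $\pi$ with controlled surplus $U^\pi = X - L^\pi$ and apply the Dynkin/It\^o formula to the discounted process $e^{-qt} v_{c_1^*,c_2^*}(U_t^\pi)$ up to the ruin time $\sigma^\pi$. Since $v_{c_1^*,c_2^*}$ is only $C^1$ (bounded variation case) or $C^2$ (unbounded variation case) away from $c_2^*$, the application of the change-of-variables formula requires the Meyer--It\^o/It\^o--Tanaka extension for the single non-smooth point, together with the smoothness classification recorded just before Proposition \ref{verification_1}; this is routine given the regularity already catalogued in Remark \ref{remark_smoothness_zero}(1) and Lemma \ref{lemma_smoothness_c_2}. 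I would decompose the dynamics into the continuous/compensated-jump part of $X$, which produces the integral of $(\mathcal{L}-q)v_{c_1^*,c_2^*}(U_{t-}^\pi)$ against $\diff t$ and a local martingale term, and the contribution of the controlled decrements $\diff L^\pi$, which accumulates the jumps $v_{c_1^*,c_2^*}(U_t^\pi) - v_{c_1^*,c_2^*}(U_{t-}^\pi)$ at the intervention times.

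The key inequalities then combine as follows. By Proposition \ref{verification_1}, $(\mathcal{L}-q)v_{c_1^*,c_2^*}(U_{t-}^\pi) \leq 0$ for all $t$ (using $U_{t-}^\pi \geq 0$ before ruin and that the inequality is an equality below $c_2^*$ and holds above it), so the drift term is nonpositive and can be discarded in the upper bound. At each intervention time $s$ with $\Delta L_s^\pi > 0$, the surplus drops from $U_{s-}^\pi + \Delta X_s$ to $U_s^\pi = U_{s-}^\pi + \Delta X_s - \Delta L_s^\pi$, and Proposition \ref{verification_2} (applied with $x = U_{s-}^\pi + \Delta X_s$ and $y = U_s^\pi$, so $x - y = \Delta L_s^\pi$) gives $v_{c_1^*,c_2^*}(U_{s-}^\pi + \Delta X_s) - v_{c_1^*,c_2^*}(U_s^\pi) \geq \Delta L_s^\pi - \beta$, which is precisely the (discounted) net dividend minus transaction cost paid at that instant. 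Rearranging the Dynkin identity and taking expectations, the local-martingale term vanishes (after a standard localization using nonnegativity of $v_{c_1^*,c_2^*}$ and dominated/monotone convergence to remove the localizing sequence and let $t \to \infty$), and one obtains $v_{c_1^*,c_2^*}(x) \geq \E_x\big[\int_0^{\sigma^\pi} e^{-qt}\diff(L_t^\pi - \sum_{0\leq s\leq t}\beta 1_{\{\Delta L_s^\pi > 0\}})\big] = v_\pi(x)$, using also $v_{c_1^*,c_2^*} \geq 0$ to handle the boundary term at $\sigma^\pi$. Taking the supremum over $\pi$ yields $v_{c_1^*,c_2^*}(x) \geq v(x)$, and combined with the reverse inequality this proves the theorem.

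I expect the main obstacle to be the technical handling of the stochastic-calculus step: justifying the Meyer--It\^o formula at the kink $c_2^*$ (where the local time term has the correct sign thanks to the concavity/convexity structure established in Lemma \ref{lemma_derivative_bounded} and the sign of the second-derivative jump), and carefully controlling the localizing stopping times so that the martingale part has zero expectation and the limiting arguments (letting the localization and the time horizon go to infinity) are valid. The integrability needed here leans on Assumption \ref{assump_finiteness_mu}, the linear growth of $v_{c_1^*,c_2^*}$ at infinity visible from \eqref{v_above_c_2}, and the nonnegativity of $v_{c_1^*,c_2^*}$; the rest of the argument is a direct assembly of the two verification propositions.
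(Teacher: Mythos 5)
Your proposal is correct and takes essentially the same route as the paper's own (sketched) proof: a verification argument applying the It\^o/Meyer--It\^o formula to $e^{-qt}v_{c_1^*,c_2^*}(U_t^\pi)$, discarding the drift via Proposition \ref{verification_1}, controlling the intervention jumps via Proposition \ref{verification_2}, passing to the limit by localization and Fatou, and treating the non-smooth point $c_2^*$ exactly as the paper does (smooth fit from Lemma \ref{lemma_smoothness_c_2} killing the local time term in the unbounded variation case, and the diffuseness of Lebesgue measure making the kink harmless in the bounded variation case). The paper merely compresses this into a sketch by citing Lemma 6 of \cite{Loeffen_2009_2} and Theorem 3.2 of \cite{MR2408999} for the change-of-variables step.
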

\begin{proof}
Here we only provide a sketch of a proof since it is similar to that of  Lemma 6 of \cite{Loeffen_2009_2}.
To verify the optimality of  $(c_1^*,c_2^*)$ we only need to show that $v_{c_1^*,c_2^*}(x) \geq v_{\pi}(x)$, $x \geq 0$, for all $\pi \in \Pi$. But this result follows from applying the It\^{o} formula to $v_{c_1^*,c_2^*}(U_t^{\pi})$ for an arbitrary $\pi \in \Pi$, using  Propositions \ref{verification_1} and  \ref{verification_2} and then passing to the limit using Fatou's lemma. Here one should be careful in applying the It\^{o} formula since the value function $v_{c_1^*,c_2^*}$ may not be smooth enough at $c_2^*$ to apply the usual version. When $X$ of unbounded variation, we use Theorem 3.2 of \cite{MR2408999}, which shows that the smooth fit principle (which we proved in Lemma~\ref{lemma_smoothness_c_2})  is enough to kill the local time terms that might accumulate around $c_2^*$; see also Theorem IV.71 of \cite{MR2273672}, or Exercise 3.6.24 of \cite{MR1121940}. On the other hand, when $X$ is of bounded variation recall from Lemma~\ref{lemma_smoothness_c_2} that the value function is only continuous. However, in this case we do not  need the smoothness of the value function at $c_2^*$, simply because the first derivative term is integrated against the Lebesgue measure which is a diffuse measure. We could also directly use the first part of  Theorem 6.2 of \cite{Oksendal_Sulem_2007}. 
\end{proof}

We conclude this section by showing the uniqueness of $(c_1^*, c_2^*)$; recall that the existence was proved in Proposition \ref{proposition_existence}.
\begin{proposition}
The maximizer $(c_1^*, c_2^*)$ is unique.
\end{proposition}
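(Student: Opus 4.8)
The plan is to establish uniqueness by showing that the first-order conditions $G(c_1^*,c_2^*)=0$ together with the appropriate condition on $H$ (as laid out in \textbf{Cases 1} and \textbf{2}) pin down $(c_1^*,c_2^*)$ completely. The strategy is to exploit the variational characterization: any maximizer of $\bar{v}_{c_1,c_2}-c_1$ is a critical point, and conversely the structural properties of the value function $v_{c_1^*,c_2^*}$ derived above force the critical point to be essentially unique. First I would argue that the conditions identified in Proposition~\ref{proposition_existence} are not merely necessary but also \emph{sufficient}, which is already signalled in the text just before Theorem~\ref{theorem_main}. Thus it suffices to show there is at most one pair satisfying those conditions.

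The main device I would use is the monotonicity already embedded in Lemmas~\ref{lemma_derivative_v_c_2} and~\ref{lemma_derivative_c_1}. Concretely, along any candidate, $G(c_1,c_2)=0$ is equivalent to the vanishing of $\partial_{c_2}\bar{v}_{c_1,c_2}$, and from the proof of Lemma~\ref{lemma_sup_bounded_set} the sign of $\partial_{c_2}\bar{v}_{c_1,c_2}$ is governed by the sign of $G(c_1,c_2)$, which in turn is controlled by the strictly increasing quantity $A(c_2)$ (recall $A(c)\uparrow\infty$). The plan is to show that for each fixed $c_1$, the map $c_2\mapsto G(c_1,c_2)$ changes sign exactly once on $(c_1,\infty)$: it starts negative near $c_1$ by Lemma~\ref{lemma_G_limit} and becomes positive for large $c_2$, and a monotonicity argument based on the single-crossing behaviour of $A$ forces a unique zero. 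This yields a well-defined implicit function $c_2=c_2(c_1)$. I would then substitute this relation into the condition on $H$ and show, using Lemma~\ref{lemma_derivative_c_1} and the sign analysis in Lemma~\ref{lemma_derivative_bounded}, that $\partial_{c_1}(\bar{v}_{c_1,c_2(c_1)}-c_1)$ is itself monotone (or changes sign at most once) in $c_1$, giving a unique $c_1^*$ and hence a unique $c_2^*=c_2(c_1^*)$.

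A cleaner alternative, which I would pursue if the implicit-function bookkeeping becomes delicate, is to argue by contradiction directly from the verification result. Suppose $(c_1^*,c_2^*)$ and $(\hat c_1,\hat c_2)$ both satisfy the conditions. By Theorem~\ref{theorem_main}, both $(c_1,c_2)$-policies are optimal, so $v_{c_1^*,c_2^*}\equiv v_{\hat c_1,\hat c_2}$ on $[0,\infty)$ since both equal the value function $v$. But by \eqref{value_function} the value function on $[0,c_2^*)$ has the explicit form $-R^{(q)}(c_2^*-x)+\gamma(c_1^*,c_2^*)Z^{(q)}(c_2^*-x)$, and the same with hatted constants; comparing these two smooth expressions of $x$ (using the strict monotonicity and the known behaviour of $R^{(q)}$, $Z^{(q)}$, and $W^{(q)}$) forces $c_2^*=\hat c_2$ and $\gamma(c_1^*,c_2^*)=\gamma(\hat c_1,\hat c_2)$. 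Then Lemma~\ref{lemma_derivative_bounded} identifies $c_1^*$ as the unique point in $(0,c_2^*)$ where $v'_{c_1^*,c_2^*}$ first drops below $1$ from above (or $c_1^*=0$ in \textbf{Case 2}), so $c_1^*=\hat c_1$ as well.

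\textbf{The hard part} will be handling the interaction between the two conditions when one slides to the boundary $c_1=0$, i.e.\ distinguishing \textbf{Case 1} from \textbf{Case 2} and ruling out that one maximizer lies in each regime. The equality-versus-inequality split in the $H$-condition means the naive single-crossing argument for $c_1$ must be supplemented by a careful check that the objective cannot have an interior critical point and simultaneously attain its maximum at the boundary. I expect to resolve this by showing that $\partial_{c_1}(\bar{v}_{c_1,c_2(c_1)}-c_1)$, equivalently $-H(c_1,c_2(c_1))/g(c_1,c_2(c_1))$ by Lemma~\ref{lemma_h_g}, has a definite sign pattern: it is nonincreasing in $c_1$ along the constraint curve, so the objective is concave along that curve and the maximizer is unique regardless of whether it is interior or on the boundary. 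Verifying this monotonicity of $H$ along $c_2=c_2(c_1)$ is the genuinely computational obstacle, and it will likely rely once more on Remark~\ref{remark_smoothness_zero}(3) and the sign of $G$.
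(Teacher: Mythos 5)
Your ``cleaner alternative'' is, in essence, the paper's own proof, so the proposal is correct: the paper supposes two maximizers $(c_1^*,c_2^*)$ and $(\hat c_1^*,\hat c_2^*)$, notes via Lemma \ref{lemma_h_g} and Theorem \ref{theorem_main} that $v_{c_1^*,c_2^*} \equiv v_{\hat c_1^*,\hat c_2^*} \equiv \sup_{\pi\in\Pi} v_\pi$, and then invokes Lemma \ref{lemma_derivative_bounded} twice --- first to force $c_2^*=\hat c_2^*$ (if they differed, any point strictly between $c_2^*$ and $\hat c_2^*$ lying above both lower thresholds would have slope $1$ under one representation and slope $<1$ under the other), and then to force $c_1^*=\hat c_1^*$ (the paper uses the identity $v_{c_1,c_2}(c_2)-v_{c_1,c_2}(c_1)=c_2-c_1-\beta$ together with the mean value theorem, which is just another way of expressing your observation that $(c_1^*,c_2^*)$ is exactly the interval on which the slope drops below one). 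Two remarks. First, your primary plan --- building the implicit function $c_2(c_1)$ from $G=0$ and proving single-crossing of $H$ along it --- is a genuinely different route, but it is precisely the one whose ``hard part'' (monotonicity of $H$ along the constraint curve and the interior-versus-boundary interaction between \textbf{Cases 1} and \textbf{2}) you flag as unresolved; the contradiction argument makes all of that moot, since the verification theorem hands you equality of the value functions for free and Lemma \ref{lemma_derivative_bounded} treats both cases uniformly. Second, the one step of your fallback argument I would tighten is concluding $c_2^*=\hat c_2$ by ``comparing the two smooth expressions'' from \eqref{value_function} via monotonicity of $R^{(q)}$ and $Z^{(q)}$: the clean mechanism is the slope comparison above (or, alternatively, locating $c_2^*$ as the unique point where $v$ fails to be $C^1$, resp.\ $C^2$, in the bounded, resp.\ unbounded, variation case), since the raw closed forms with different values of $c_2$ and $\gamma$ are not obviously distinguishable by monotonicity alone.
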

\begin{proof}
Suppose $(c_1^*, c_2^*)$ and $(\hat{c}_1^*, \hat{c}_2^*)$ both maximize $\bar{v}_{c_1,c_2}-c_1$.  We shall show that they must be equal. 

By Lemma \ref{lemma_h_g}, both $(c_1^*, c_2^*)$ and $(\hat{c}_1^*, \hat{c}_2^*)$ satisfy \textbf{Case 1} or  \textbf{Case 2} and by Theorem \ref{theorem_main} we have
\begin{align}
v_{c_1^*,c_2^*}(x) = v_{\hat{c}_1^*,\hat{c}_2^*}(x) =\sup_{\pi\in \Pi}v_\pi(x) \quad x \geq 0. \label{uniqueness_equal}
\end{align}
We first show that $c_2^* =  \hat{c}_2^*$.  Indeed, by Lemma \ref{lemma_derivative_bounded}, $v_{c_1^*,c_2^*}'(x) < 1$ on $(c_1^*,c_2^*)$ and $v_{\hat{c}_1^*,\hat{c}_2^*}'(x) < 1$ on $(\hat{c}_1^*,\hat{c}_2^*)$ while $v_{c_1^*,c_2^*}'(x) = 1$ on $(c_2^*,\infty)$ and $v_{\hat{c}_1^*,\hat{c}_2^*}'(x) = 1$ on $(\hat{c}_2^*,\infty)$.  Hence if $c_2^*  \neq  \hat{c}_2^*$, it would contradict with \eqref{uniqueness_equal} for the points between $c_2^*$ and $\hat{c}_2^*$.

In order to show $c_1^* =  \hat{c}_1^*$, we appeal to the identity $v_{c_1,c_2} (c_2) -  v_{c_1,c_2}(c_1) 
= c_2-c_1 - \beta$, which holds for any $0 \leq c_1 < c_2$ for which $v_{c_1,c_2}$ is continuous at $c_2$.  This together with \eqref{uniqueness_equal} and  $c_2^* =  \hat{c}_2^*$ shows $v_{c_1^*,c_2^*}(\hat{c}_1^*) -  v_{c_1^*,c_2^*}(c_1^*) 
= \hat{c}_1^* -c_1^*$.  If $\hat{c}_1^* \neq c_1^*$, by the mean value theorem, there exists a point between these at which $v_{c_1^*,c_2^*}'$ is one; however, this contradicts Lemma \ref{lemma_derivative_bounded}.  This completes the proof.
\end{proof}

\section{Numerical Examples} \label{section_numerics}
In this section, we confirm the results numerically using the spectrally positive \lev process with i.i.d.\ phase-type distributed jumps \cite{Asmussen_2004} of the form
\begin{equation*}
  X_t  - X_0= -\mathfrak{d} t+\sigma B_t + \sum_{n=1}^{N_t} Z_n, \quad 0\le t <\infty, 
\end{equation*}
for some $\mathfrak{d} \in \R$ and $\sigma \geq 0$.  Here $B=\{B_t; t\ge 0\}$ is a standard Brownian motion, $N=\{N_t; t\ge 0\}$ is a Poisson process with arrival rate $\lambda$, and  $Z = \left\{ Z_n; n = 1,2,\ldots \right\}$ is an i.i.d.\ sequence of phase-type-distributed random variables with representation $(m,{\bm \alpha},{\bm T})$; see \cite{Asmussen_2004}.
The processes $N$, $B$ and $Z$ are assumed to be mutually independent. Its Laplace exponent \eqref{laplace_spectrally_positive} is then
\begin{align*}
 \psi(s)   = \mathfrak{d} s + \frac 1 2 \sigma^2 s^2 + \lambda \left( {\bm \alpha} (s {\bm I} - {\bm{T}})^{-1} {\bm t} -1 \right),
 \end{align*}
which is analytic for every $s \in \mathbb{C}$ except at the eigenvalues of ${\bm T}$.  Suppose $\{ -\xi_{i,q}; i \in \mathcal{I}_q \}$ is the set of the roots of the equality $\psi(s) = q$ with negative real parts, and if these are assumed distinct, then
the scale function can be written
\begin{align}
W^{(q)}(x) =  \frac {e^{\Phi(q) x}} {\psi'(\Phi(q))}  - \sum_{i \in \mathcal{I}_q}  C_{i,q} e^{-\xi_{i,q} x}, \quad x \geq 0, \label{scale_function_distinct}
\end{align}
where
\begin{align*}
C_{i,q} &:= \left. \frac { s+\xi_{i,q}} {q-\psi(s)} \right|_{s = -\xi_{i,q}} = - \frac 1 {\psi'(-\xi_{i,q})};
\end{align*}
see \cite{Egami_Yamazaki_2010_2}.  Here $\{ \xi_{i,q}; i \in \mathcal{I}_q \}$ and  $\{ C_{i,q}; i \in \mathcal{I}_q \}$ are possibly complex-valued.

In our example, we shall choose a phase-type distribution which does not have a completely monotone density.
Recall that, in the spectrally negative counterpart \cite{Loeffen_2009_2}, the $(c_1,c_2)$-policy may fail to be optimal if the \lev density is not completely monotone.  On the other hand, in the dual model, there is no restriction on the \lev measure. We assume $m = 6$ and 
\begin{align*}
&{\bm T} = \left[ \begin{array}{rrrrrr}   -5.6546  &  0.0000   &      0.0000  &  0.0000  &  0.0000  &  0.0000 \\
    0.6066  & -5.6847 &   0.0000  &  0.0166  &  0.0089  &  5.0526 \\
    0.2156  &  4.3616  & -5.6485  &  0.9162 &   0.1424 &   0.0126 \\
    5.6247 &   0.0000   & 0.0000  & -5.6786  &  0.0000  &  0.0000 \\
    0.0107  &  0.0000  &  0.0000 &   5.7247 &  -5.7420   & 0.0000 \\
    0.0136  &  0.0000 &   0.0000  &  0.0024 &   5.7022 &  -5.7183 \end{array} \right],  \quad {\bm \alpha} =    \left[ \begin{array}{l}   
    0.0000 \\
    0.0007 \\
    0.9961 \\
    0.0000 \\
    0.0001 \\
    0.0031  \end{array} \right],
\end{align*}
which give an approximation of the Weibull distribution with density function $f(x) = \alpha \gamma^\alpha x^{\alpha -1} \exp \left\{ - (\gamma x)^\alpha\right\}$ for $\alpha = 2$ and $\gamma = 1$, obtained using the EM-algorithm; see \cite{Egami_Yamazaki_2010_2} regarding the approximation performance of the corresponding scale function.  Throughout this section, we let $q = 0.05$ and let other parameters vary so as to see their impacts on the optimal strategy and the value function.

In our first experiment, we let $\mathfrak{d} = 2$, $\sigma = 0$ or $\sigma = 1$ with
\begin{description}
\item[Case 1] $\beta = 4$ and $\lambda=3$
\item[Case 2] $\beta = 4$ and $\lambda=1$
\end{description}
and obtain the optimal strategies/value functions and confirm the analytical results obtained in the previous sections.  We choose these parameters so that $c_1^*> 0$ for Case 1 and $c_1^*= 0$ for Case 2.

Figures \ref{figure_with_diffusion} and \ref{figure_no_diffusion} show the results for $\sigma = 1$ and $\sigma =0$, respectively.  In both figures, we plot in the left column $\bar{v}_{c_1,c_2}-c_1$ with respect to $c_1$ and $c_2$ and in the right column the value function $v_{c_1^*,c_2^*}(\cdot)$ as a function of the initial value $x$.  Recall that the values $(c_1^*,c_2^*)$ are those that maximize $\bar{v}_{c_1,c_2}-c_1$.  As can be suggested from the contour map of $\bar{v}_{c_1,c_2}-c_1$, there exists a unique global maximum and hence Newton's method is a reasonable choice of computing the maximizer $(c_1^*,c_2^*)$. 
For the plots of the value functions, the circles indicate the points $(c_1^*, v_{c_1^*,c_2^*}(c_1^*))$ and $(c_2^*, v_{c_1^*,c_2^*}(c_2^*))$ and  the dotted lines the 45-degree lines passing through these points.

In view of these figures,  the continuity/smoothness at $c_2^*$ is readily confirmed; it appears to be differentiable for the case $\sigma = 1$ (in other words, the value function is tangent to the 45-degree line) while it is continuous for the case $\sigma = 0$.  The non-differentiability for $\sigma = 0$ is apparent in view of Case 2 in Figure \ref{figure_no_diffusion}.  At $c_1^*$, the value function is indeed tangent to the 45-degree line  if $c_1^* > 0$, while for the case $c_1^* = 0$, we see that the slope is less than one.  These results are consistent with Proposition \ref{proposition_existence}.  It is also confirmed that the slope of $v_{c_1^*,c_2^*}$ is smaller than $1$ only at those points inside $[c_1^*,c_2^*]$, which verifies Lemma \ref{lemma_derivative_bounded} and Proposition \ref{verification_2}.   

\begin{figure}[htbp]
\begin{center}
\begin{minipage}{1.0\textwidth}
\centering
\begin{tabular}{cc}
\includegraphics[scale=0.58]{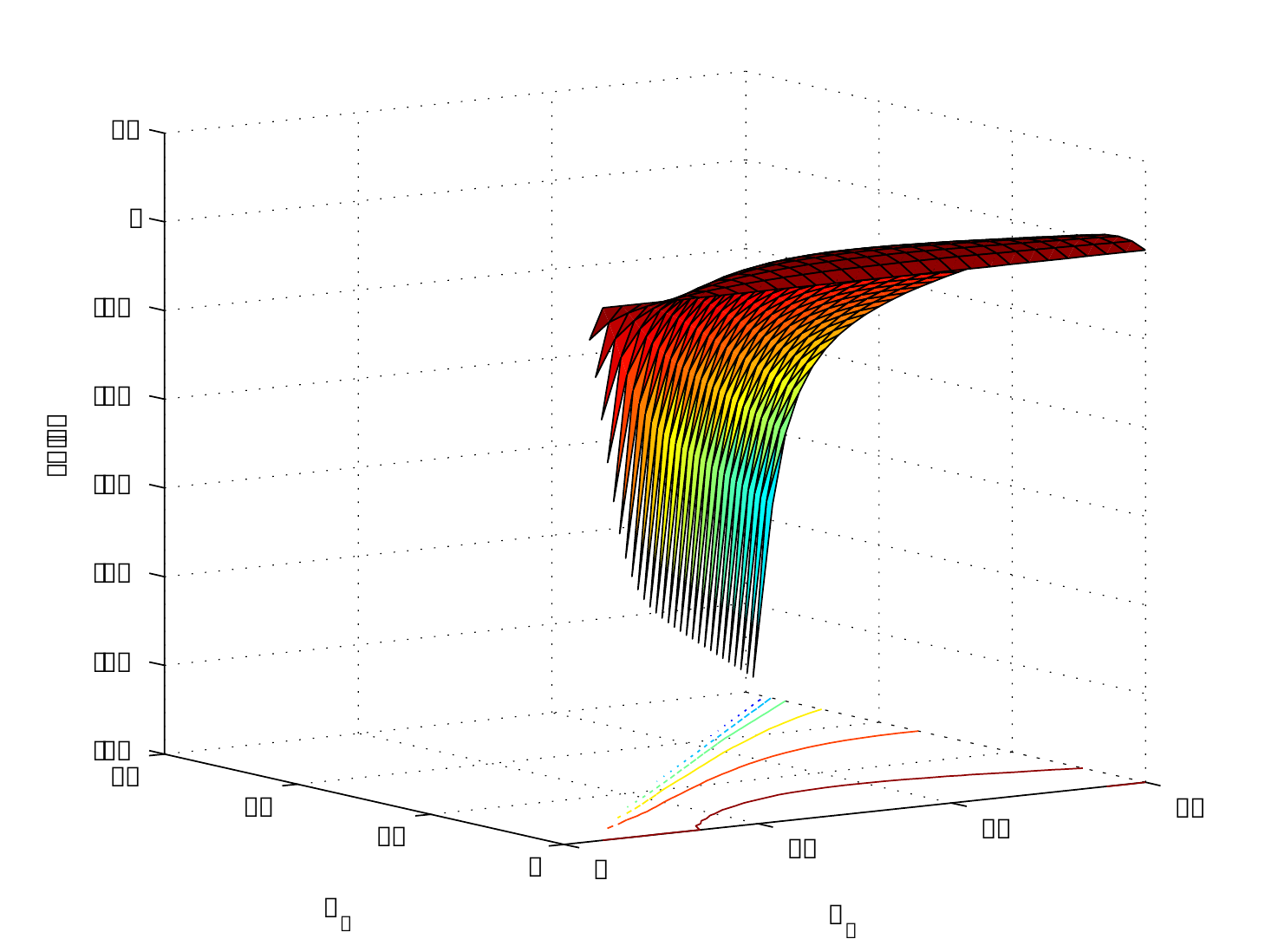}  & \includegraphics[scale=0.58]{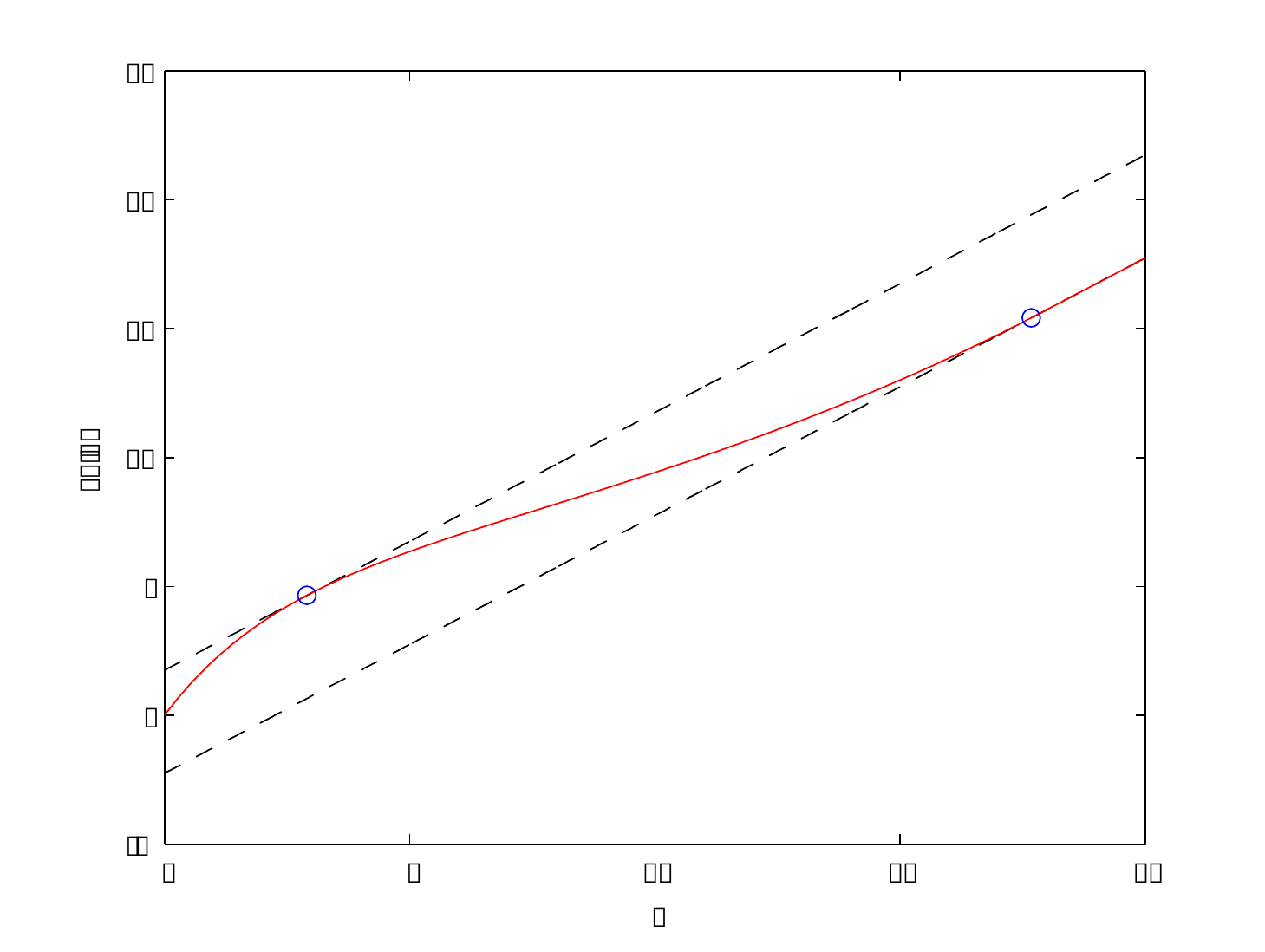} \\
\multicolumn{2}{c}{Case 1: $\beta = 4$ and $\lambda=3$}\vspace{0.3cm} \\
\includegraphics[scale=0.58]{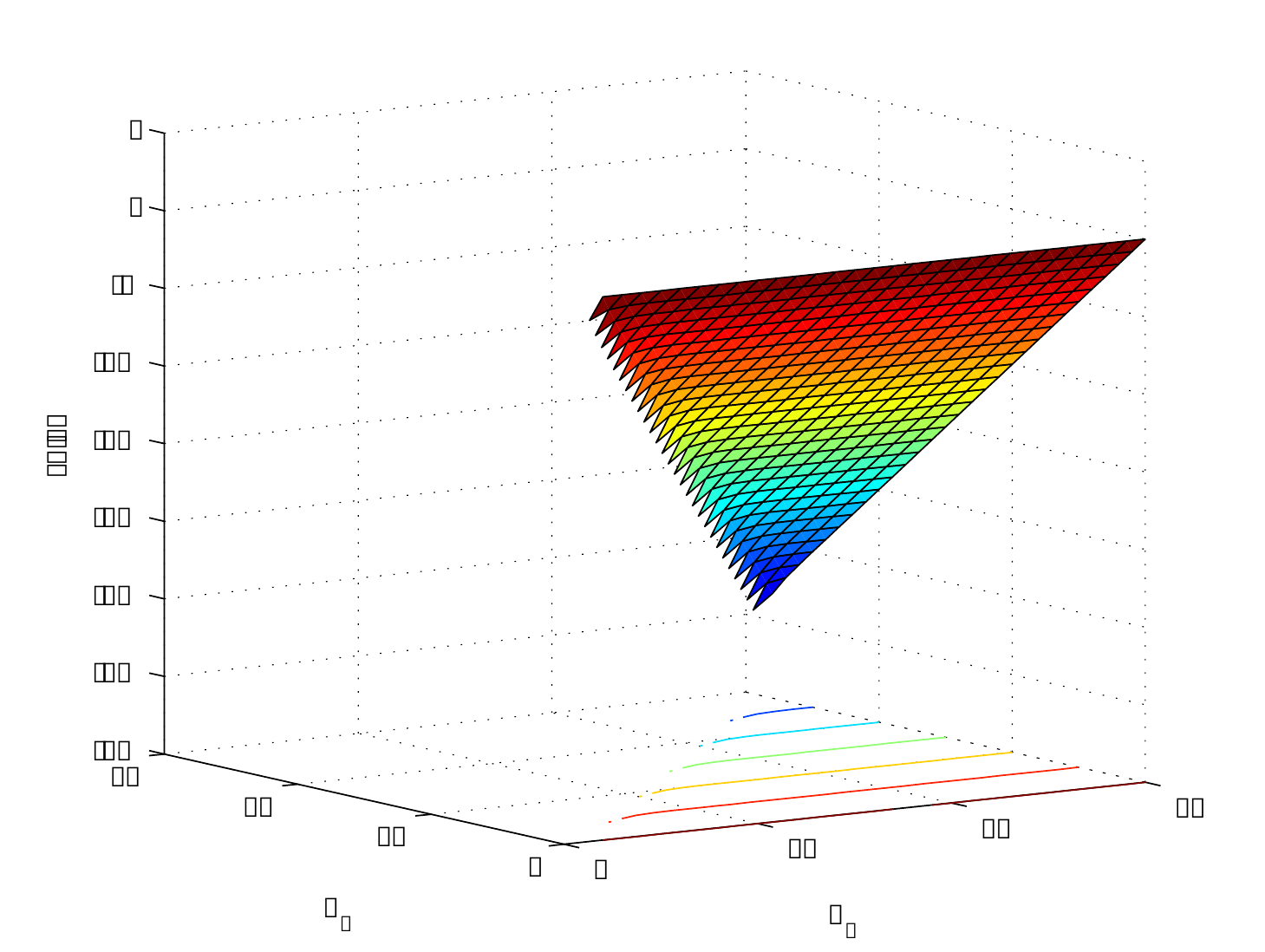}  & \includegraphics[scale=0.58]{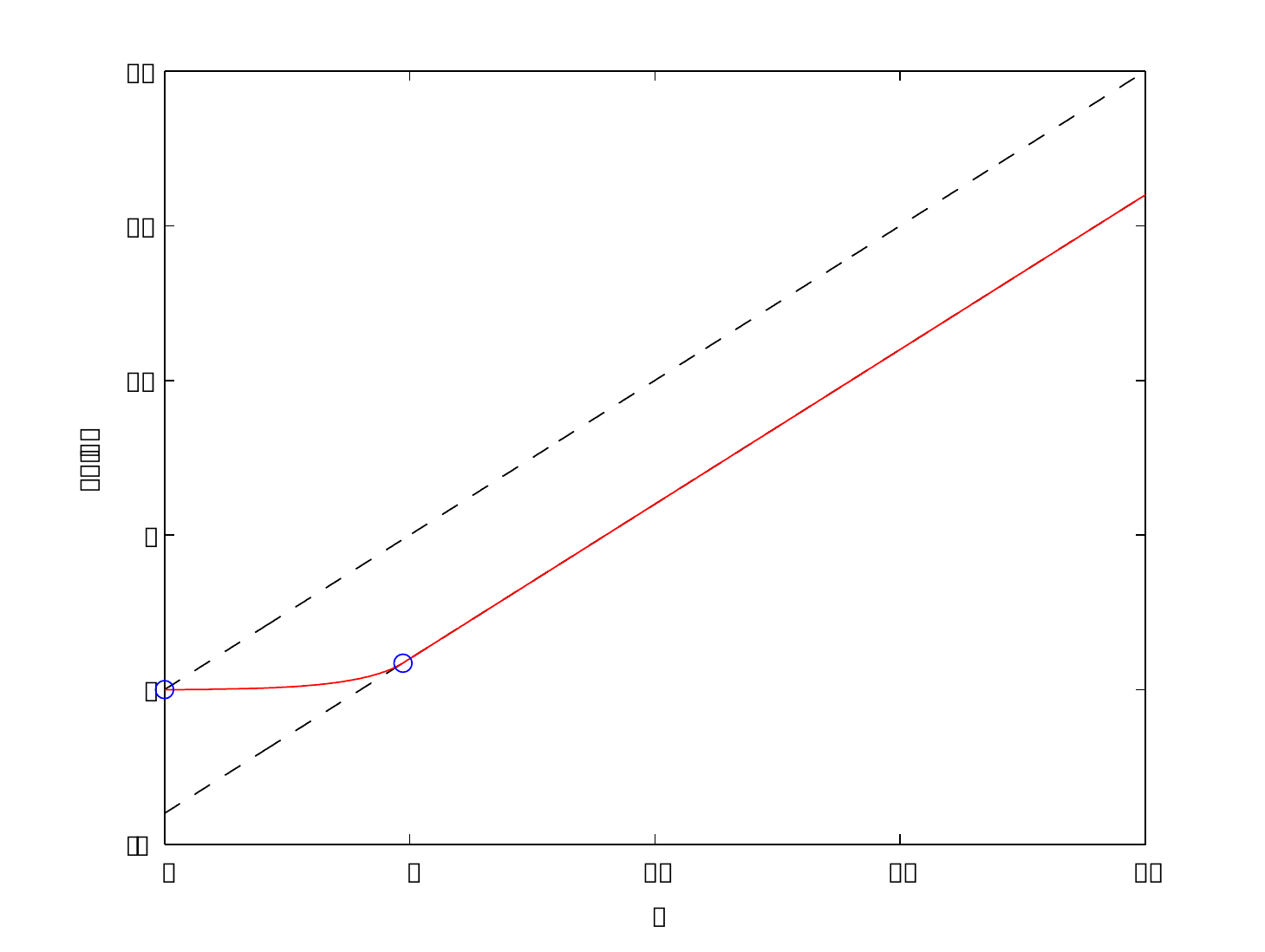} \\
\multicolumn{2}{c}{Case 2: $\beta = 4$ and $\lambda=1$}\vspace{0.3cm} \\
\end{tabular}
\end{minipage}
\caption{For the case $\sigma = 1$: (left) $\bar{v}_{c_1,c_2}-c_1$ with respect to $c_1$ and $c_2$, (right) the value function $v_{c_1^*,c_2^*}$ as a function of $x$.} \label{figure_with_diffusion}
\end{center}
\end{figure}

\begin{figure}[htbp]
\begin{center}
\begin{minipage}{1.0\textwidth}
\centering
\begin{tabular}{cc}
\includegraphics[scale=0.58]{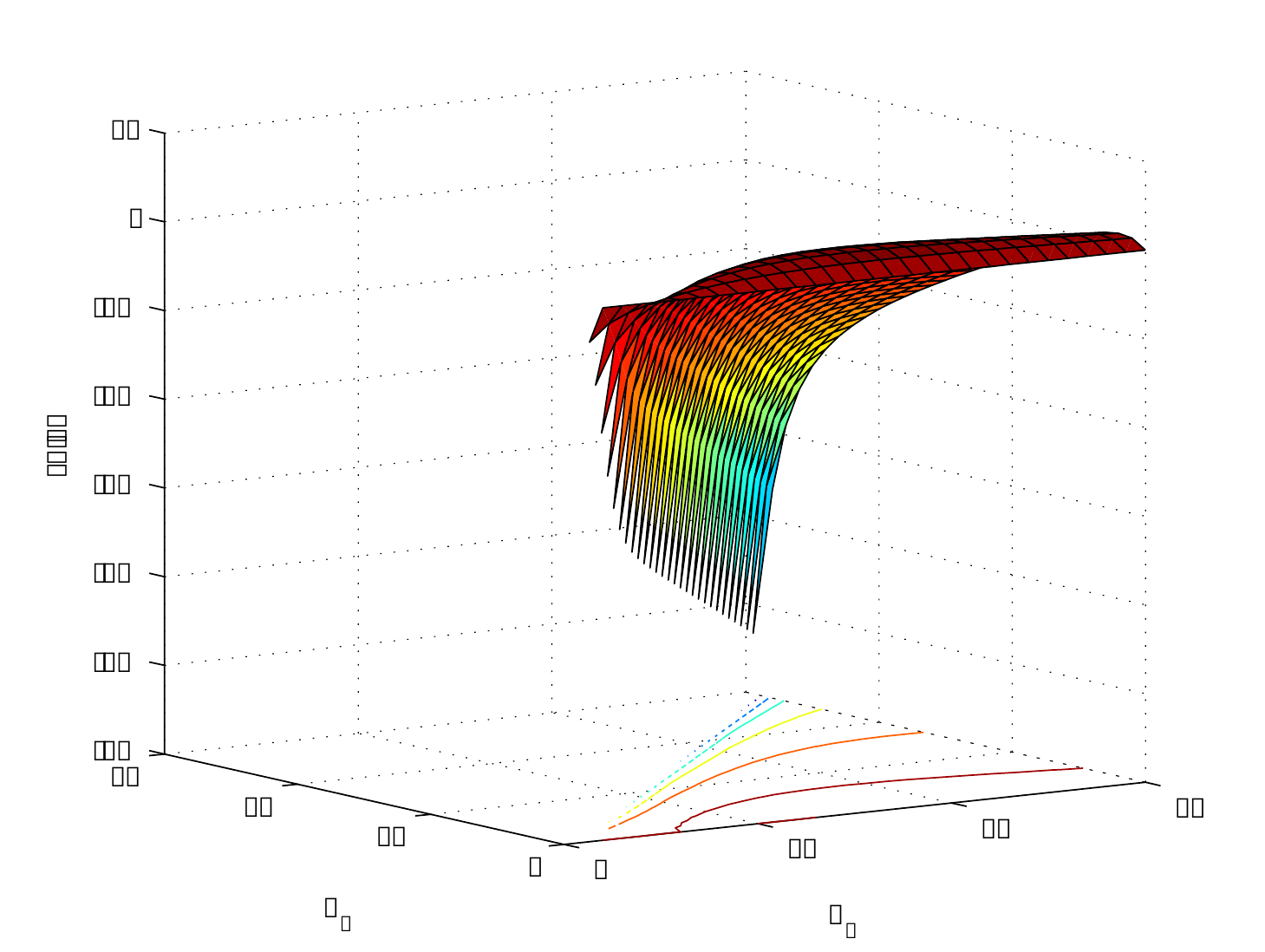}  & \includegraphics[scale=0.58]{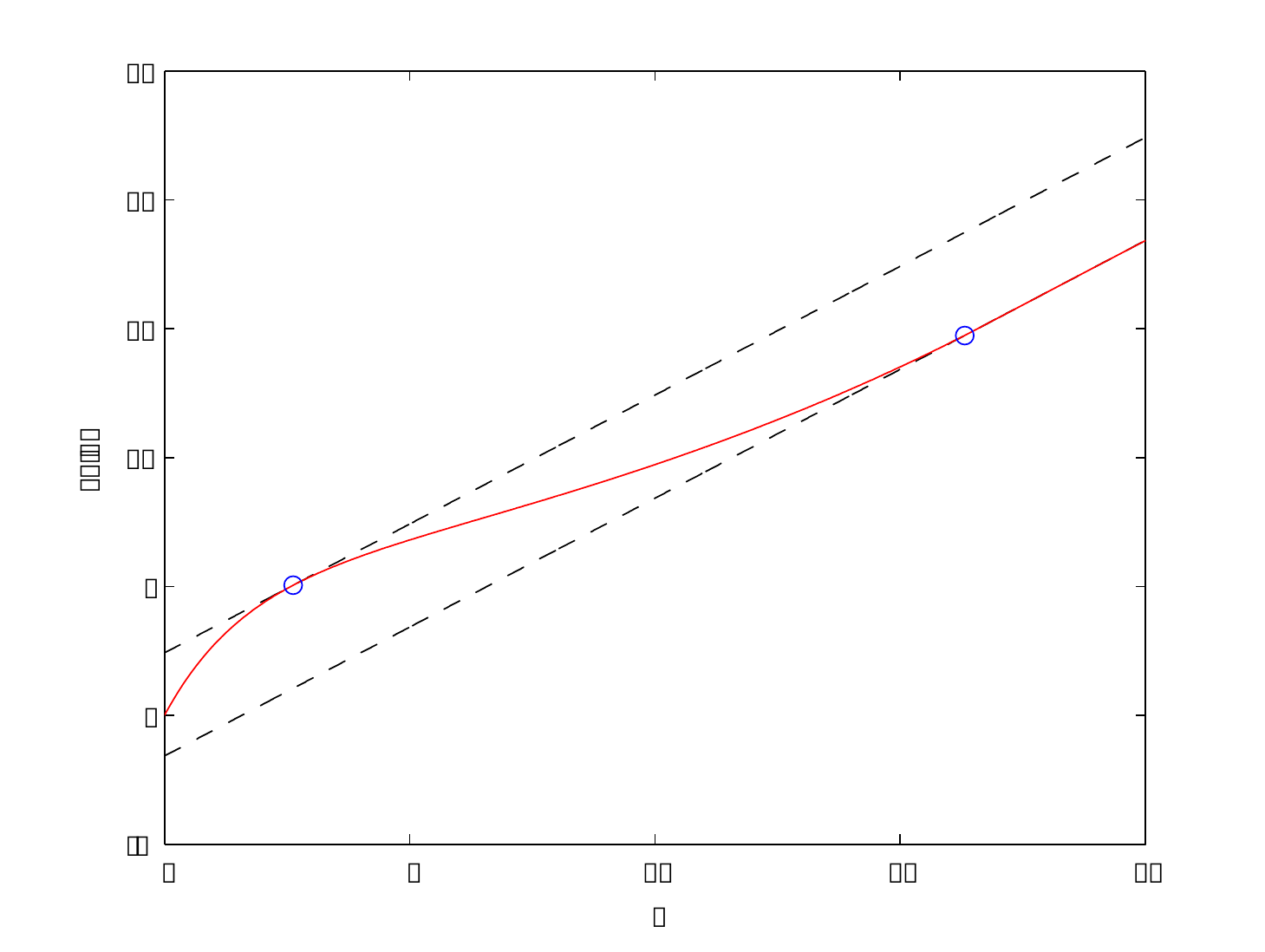} \\
\multicolumn{2}{c}{Case 1: $\beta = 4$ and $\lambda=3$}\vspace{0.3cm} \\
\includegraphics[scale=0.58]{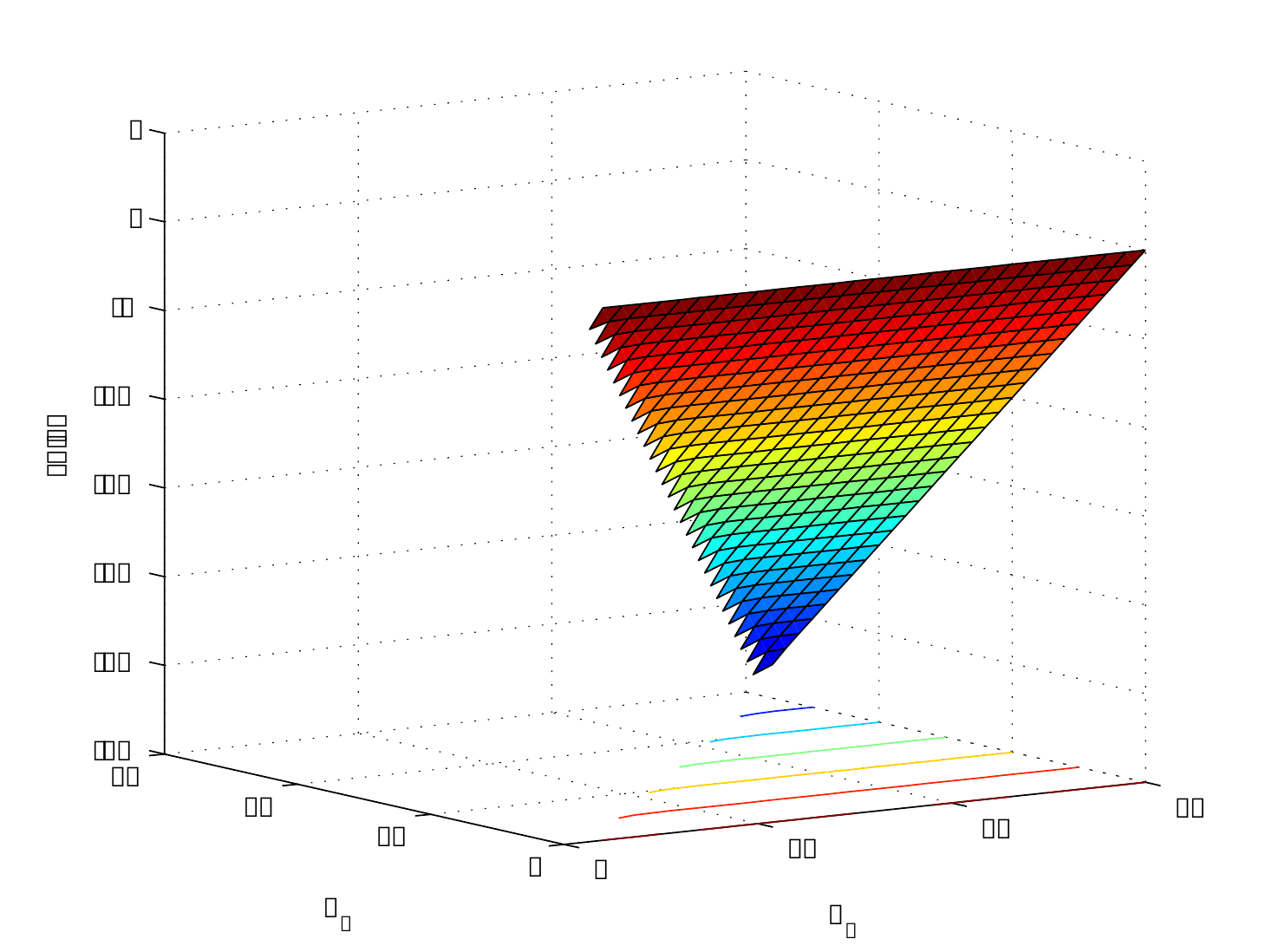}  & \includegraphics[scale=0.58]{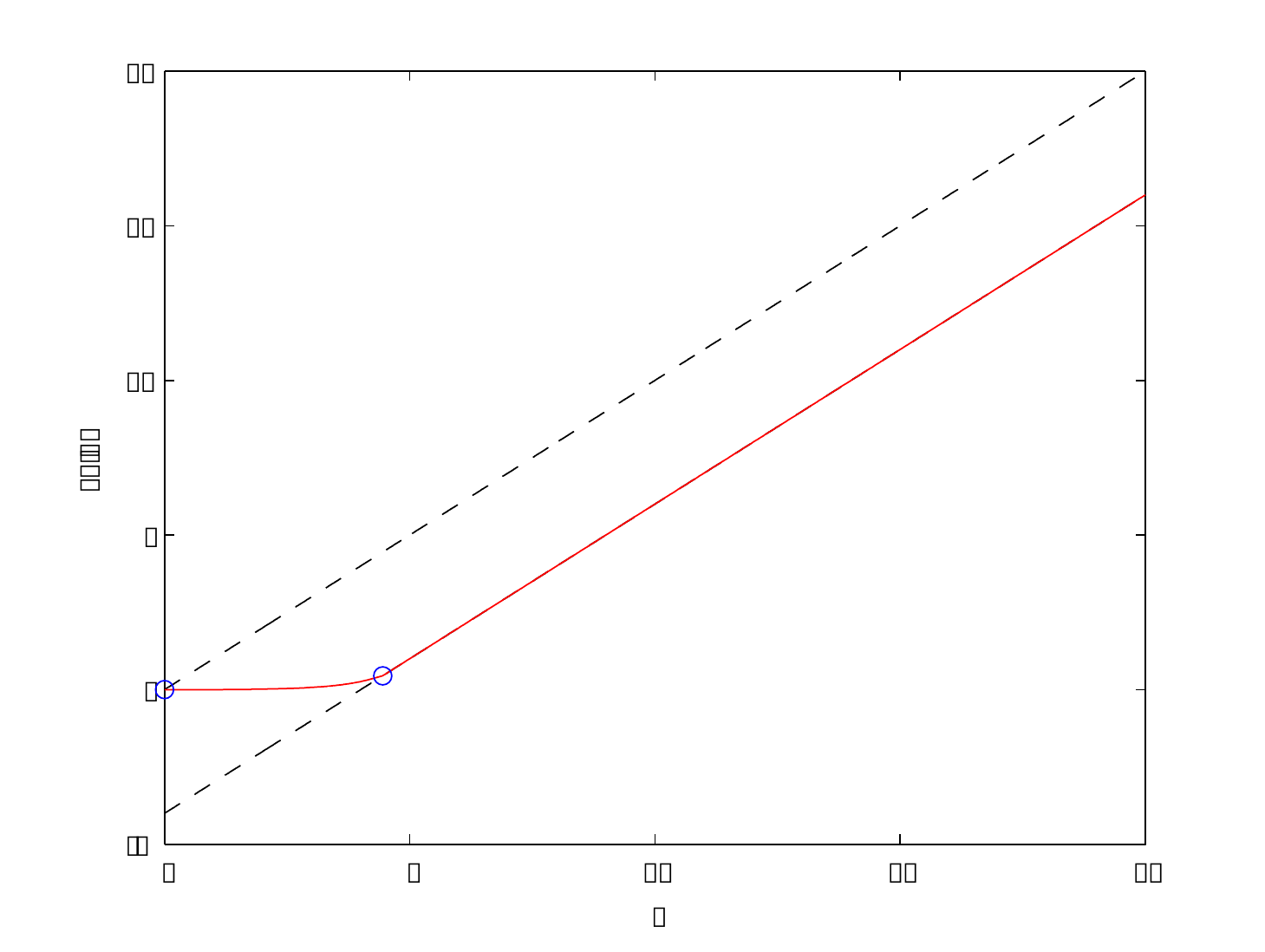} \\
\multicolumn{2}{c}{Case 2: $\beta = 4$ and $\lambda=1$}\vspace{0.3cm} \\
\end{tabular}
\end{minipage}
\caption{For the case with $\sigma = 0$: (left) $\bar{v}_{c_1,c_2}-c_1$ with respect to $c_1$ and $c_2$, (right) the value function $v_{c_1^*,c_2^*}$ as a function of $x$.} \label{figure_no_diffusion}
\end{center}
\end{figure}

In our second experiment, we take $\beta \downarrow 0$ and see if the value function converges to the one under no-transaction costs as in \cite{Bayraktar_2012}:
\begin{equation} \label{value_function_no_transaction}
\hat{v}_{a^*}(x) :=
\begin{cases}
 - \overline{R}^{(q)} (a^*-x), & \text{if $\mu>0$}, 
 \\ x, & \text{if $\mu \leq 0$},
 \end{cases}  
 \end{equation}
for any $x \geq 0$, with the optimal barrier level 
\begin{equation*}
a^* := \begin{cases} \left(\overline{Z}^{(q)}\right)^{-1}\left (\frac \mu q \right)> 0 & \text{if $\mu>0$}, \\
0   & \text{if $\mu\leq 0$}. \end{cases}
\end{equation*}
We let $\lambda = 3$ and consider the case $\mu > 0$ (by choosing $\mathfrak{d} = 2$) and also the case $\mu < 0$ (by choosing $\mathfrak{d} = 3$).

Figure \ref{figure_beta} plots for each case the value function $v_{c_1^*,c_2^*}(\cdot)$ for $\beta = 10,5,1,0.5,0.1$ (dotted) together with the no-transaction case $\hat{v}_{a^*}(\cdot)$ (solid) as in \eqref{value_function_no_transaction}.  The circles on the plots indicate the points $(c_1^*, v_{c_1^*,c_2^*}(c_1^*)), (c_2^*, v_{c_1^*,c_2^*}(c_2^*))$ and also $(a^*, \hat{v}_{a^*}(a^*))$.  It is easy to see that the value function is monotone in $\beta$ (uniformly in $x$), and converge to the no-transaction cost case as $\beta \downarrow 0$.  The convergences of both $c_1^*$ and $c_2^*$  to $a^*$ are also observed. In fact, one can prove the convergence of value functions using the stability of viscosity solutions.
\begin{proposition}
Let $v^{\beta}$ denote the value function corresponding to the dividend payment problem when the fixed transaction cost is $\beta$ (defined as above), and $\hat{v}$ the value function when there are no-transaction costs. Then $v^{\beta}$ converges to $\hat{v}$ uniformly as $\beta \downarrow 0$.
\end{proposition}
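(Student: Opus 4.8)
The plan is to realise both value functions as viscosity solutions of Hamilton--Jacobi--Bellman equations and then invoke the stability of viscosity solutions as $\beta \downarrow 0$, glued together with a monotonicity argument that upgrades the convergence to a uniform one. First I would record the two variational characterisations. For fixed $\beta>0$, standard impulse-control theory (e.g.\ Theorem 6.2 of \cite{Oksendal_Sulem_2007}) shows that $v^\beta$ is the unique continuous, linearly growing viscosity solution of the quasi-variational inequality
$$\max\left\{ (\mathcal{L}-q) v^\beta(x),\; \mathcal{M}^\beta v^\beta(x) - v^\beta(x) \right\} = 0, \quad x > 0,$$
subject to $v^\beta(x)=0$ for $x<0$, where the intervention operator is $\mathcal{M}^\beta w(x) := \sup_{0<\xi\le x}\{\xi - \beta + w(x-\xi)\}$; likewise the no-transaction-cost value $\hat v$ of \cite{Bayraktar_2012} is the unique such solution of the singular-control variational inequality $\max\{(\mathcal{L}-q)\hat v(x),\, 1-\hat v'(x)\}=0$ with the same ruin condition.

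Next I would extract the a priori bounds and monotonicity needed to form relaxed limits. Since incurring a cost $\beta$ can only lower the reward of any fixed strategy, the supremum $v^\beta$ is nonincreasing in $\beta$ and $v^\beta \le \hat v$ for every $\beta>0$; the zero-dividend strategy (or any fixed $(c_1,c_2)$-policy) furnishes a locally uniform lower bound. Consequently $v^\beta \uparrow v^0 := \lim_{\beta\downarrow 0} v^\beta \le \hat v$ pointwise, and the family is locally uniformly bounded. I would then form the Barles--Perthame upper and lower half-relaxed limits $\bar v$ and $\underline v$ of $(v^\beta)_{\beta>0}$ along $\beta\downarrow 0$.

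The heart of the argument is the stability step: I would show that $\bar v$ is a viscosity subsolution and $\underline v$ a viscosity supersolution of the limiting singular-control inequality. The only genuinely new point relative to a routine stability passage is that the \emph{local} part of the obstacle must be recovered from the \emph{nonlocal} intervention operator. Testing $\mathcal{M}^\beta w(x) - w(x) \ge \xi - \beta + w(x-\xi)-w(x)$ with $\xi\downarrow 0$ forces, in the limit $\beta\downarrow0$, the gradient constraint $1-\varphi'(x)\le 0$ at a test function $\varphi$ touching $\bar v$ from above; the complementary inequality for $\underline v$ is obtained analogously, while the integro-differential term $(\mathcal{L}-q)\varphi$ passes to the limit by dominated convergence using the local boundedness and the Lévy integrability condition $\int_{(0,\infty)}(1\wedge z^2)\nu(\diff z)<\infty$.

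Finally, a comparison principle for the limiting variational inequality gives $\bar v \le \underline v$; since trivially $\underline v \le \bar v$, the two relaxed limits coincide, so $v^0$ is continuous and is itself a viscosity solution of the limiting inequality, whence $v^0 = \hat v$ by uniqueness and the convergence is locally uniform. To promote this to uniform convergence on $[0,\infty)$ I would combine Dini's theorem (the $v^\beta$ are continuous and increase monotonically to the continuous limit $\hat v$, so convergence is uniform on compacts) with the asymptotic linearity of both value functions: for $x$ large, $v^\beta(x)=x-c_1^*-\beta+\bar v_{c_1^*,c_2^*}$ while $\hat v(x)-x$ tends to a constant, so $\hat v - v^\beta$ is bounded and its tail is controlled uniformly in $x$. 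I expect the main obstacle to be the comparison principle itself, namely establishing uniqueness for the nonlocal limiting inequality on the unbounded half-line with the ruin condition at the origin and only linear growth at infinity; the careful handling of the Lévy integral term inside the doubling-of-variables argument is where the real work lies.
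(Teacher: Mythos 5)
Your proposal follows essentially the same route as the paper: monotonicity in $\beta$ gives $v^\beta \uparrow \tilde v \le \hat v$ pointwise, viscosity stability identifies the limit with $\hat v$, and Dini's theorem plus the linear tails upgrade this to uniform convergence. The paper's version is more streamlined at the stability step: because the inequality $\tilde v \le \hat v$ is already free from monotonicity, it only needs to show that $\tilde v$ is a viscosity \emph{super}solution of the no-cost variational inequality, whereas you run the full Barles--Perthame machinery (half-relaxed limits, both sub- and supersolution properties, and a comparison principle for the nonlocal limiting inequality, which you correctly flag as the heaviest ingredient). One concrete point you elide in the final step: gluing Dini on compacts to the tail argument requires that the linear regime of $v^\beta$ begins at a point that is bounded \emph{uniformly in} $\beta$, i.e.\ $\sup_{\beta \le \beta_0} c_2^*(\beta) < \infty$; your statement that ``the tail is controlled uniformly in $x$'' silently uses this, since the difference $\hat v - v^\beta$ is only constant beyond $\max(c_2^*(\beta), a^*)$ and $c_2^*(\beta)$ depends on $\beta$. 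The paper supplies exactly this ingredient, noting that $c_2^*$ remains bounded for small $\beta$ (via the convergence $c_2^* \to a^*$, or by modifying the proof of Lemma \ref{lemma_sup_bounded_set}); you should add such an argument to close the gap.
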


\begin{proof}
From the definition of the problem, $v^{\beta} \leq \hat{v}$ and $v^{\beta}$ is decreasing in $\beta$ and hence it has a point-wise limit, which we will call $\tilde{v}$. The proof is completed if we can show that $\tilde{v}$ is a viscosity super-solution of the variational inequality that corresponds to the problem without transaction costs. But this is an immediate consequence of the stability result of the viscosity solutions (see e.g.\ Theorem 6.8 of \cite{MR2976505} and Theorem 1 of \cite{Barles_2008}), since we can obtain the variational inequality in the no-transaction case by taking a limit in the case with transaction costs.

To get to uniform convergence from point-wise convergence we just proved, we appeal to Dini's theorem to first show it on compacts. This indeed holds because we already know that $(v^{\beta})$ and $\hat{v}$ are continuous functions and $v^{\beta} \uparrow \hat{v}$ as $\beta \downarrow 0$.  Now, because the slopes of $(v^{\beta})$ and $\hat{v}$ are all one above $c_2^*$ and $a^*$, respectively, and because $c_2^*$ can be shown to be bounded for any small $\beta$ (thanks to the convergence $c_2^*$ to $a^*$ as $\beta \downarrow 0$ or modifying the proof of Lemma \ref{lemma_sup_bounded_set}), the uniform convergence holds.
\end{proof}

We also observe in the figures that for  $\mu < 0$, $c_1^* = 0$.  This can be shown analytically for any $\beta > 0$.  
\begin{corollary}
If $\mu \leq 0$, we must have $c_1^* = 0$.
\end{corollary}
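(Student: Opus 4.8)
The plan is to rule out the first alternative in Proposition~\ref{proposition_existence}(2); once $c_1^*>0$ is shown to be impossible, only the alternative $c_1^*=0$ survives. So suppose for contradiction that a maximizer has $c_1^*>0$. By Lemma~\ref{lemma_h_g} it then satisfies both $G(c_1^*,c_2^*)=0$ and $H(c_1^*,c_2^*)=0$. Solving each of these for the common scalar $\gamma(c_1^*,c_2^*)$ (using the definitions of $G$ and $H$, and noting $Z^{(q)}(c_2^*)\ge 1>0$ and $W^{(q)}(c_2^*-c_1^*)>0$) yields
\[
\frac{R^{(q)}(c_2^*)}{Z^{(q)}(c_2^*)}=\gamma(c_1^*,c_2^*)=\frac{\overline{W}^{(q)}(c_2^*-c_1^*)}{W^{(q)}(c_2^*-c_1^*)}.
\]
Because $R^{(q)}=\overline{Z}^{(q)}-\mu/q$ and $\mu\le 0$, the left-hand side is at least $\overline{Z}^{(q)}(c_2^*)/Z^{(q)}(c_2^*)$. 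Writing $y:=c_2^*-c_1^*<c_2^*$ (strict, as $c_1^*>0$), it therefore suffices to establish the strict inequality
\[
\frac{\overline{W}^{(q)}(y)}{W^{(q)}(y)}<\frac{\overline{Z}^{(q)}(c_2^*)}{Z^{(q)}(c_2^*)},\qquad 0<y<c_2^*,
\]
which contradicts the displayed identity and rules out $c_1^*>0$.

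I would reduce this target to two monotonicity properties of the scale functions: (I) $y\mapsto \overline{W}^{(q)}(y)/W^{(q)}(y)$ is nondecreasing, and (II) $y\mapsto Z^{(q)}(y)/W^{(q)}(y)$ is strictly decreasing. Fact~(I) follows from the log-concavity of $W^{(q)}$ recorded in Remark~\ref{remark_smoothness_zero}(3): since $W^{(q)'}/W^{(q)}$ is nonincreasing and $W^{(q)}$ is increasing, integrating the pointwise bound $W^{(q)'}(y)W^{(q)}(t)\le W^{(q)'}(t)W^{(q)}(y)$ over $t\in(0,y)$ gives $W^{(q)'}(y)\overline{W}^{(q)}(y)\le W^{(q)}(y)(W^{(q)}(y)-W^{(q)}(0))$, so the numerator $(W^{(q)})^2-\overline{W}^{(q)}W^{(q)'}$ of the quotient's derivative is $\ge W^{(q)}(y)W^{(q)}(0)\ge 0$. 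Given (II), a weighted-average argument supplies the strict comparison at $c_2^*$: since $Z^{(q)}/W^{(q)}$ is decreasing and $W^{(q)}>0$,
\[
\frac{\overline{Z}^{(q)}(c_2^*)}{\overline{W}^{(q)}(c_2^*)}=\frac{\int_0^{c_2^*}(Z^{(q)}/W^{(q)})(t)\,W^{(q)}(t)\,\diff t}{\int_0^{c_2^*}W^{(q)}(t)\,\diff t}>\frac{Z^{(q)}(c_2^*)}{W^{(q)}(c_2^*)},
\]
i.e.\ $\overline{W}^{(q)}(c_2^*)/W^{(q)}(c_2^*)<\overline{Z}^{(q)}(c_2^*)/Z^{(q)}(c_2^*)$; combining this with (I) applied between $y$ and $c_2^*$ produces the required strict bound and hence the contradiction.

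The main obstacle is Fact~(II), equivalently the strict positivity of $\phi:=\overline{Z}^{(q)}W^{(q)}-\overline{W}^{(q)}Z^{(q)}$. The non-strict log-concavity of Remark~\ref{remark_smoothness_zero}(3) is not enough by itself: the natural decomposition of $\phi$ splits into a manifestly positive term and a manifestly negative term that cancel exactly in the degenerate case of a pure downward drift, precisely the monotone-path case excluded in Section~\ref{section_model}. To prove (II) I would analyze $F:=Z^{(q)}/W^{(q)}$ through the relation $F'=q-F\,(W^{(q)'}/W^{(q)})$, obtained from $Z^{(q)'}=qW^{(q)}$; at any critical point ($F'=0$) this gives $F''=-F\,(W^{(q)'}/W^{(q)})'\ge 0$, because $W^{(q)'}/W^{(q)}$ is nonincreasing, so every critical point of $F$ is a local minimum. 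Together with $F'<0$ near the origin (read off from the boundary values of $W^{(q)}(0)$ and $W^{(q)'}(0+)$ in Remark~\ref{remark_smoothness_zero}(2), using that $X$ is not a pure downward drift) and the known limit $Z^{(q)}/W^{(q)}\to q/\Phi(q)$ as $y\uparrow\infty$, this forces $F$ to be strictly decreasing on $(0,\infty)$. Throughout, derivatives are read as right-derivatives wherever $W^{(q)}$ is not differentiable, and the bounded- and unbounded-variation boundary behaviours at $0$ should be treated separately using Remark~\ref{remark_smoothness_zero}. Assembling (I) and (II) then excludes the first alternative of Proposition~\ref{proposition_existence}(2), so $c_1^*=0$.
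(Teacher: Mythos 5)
Your reduction is sound: at a maximizer with $c_1^*>0$, Lemma~\ref{lemma_h_g} gives $G=H=0$, hence $\gamma(c_1^*,c_2^*)=R^{(q)}(c_2^*)/Z^{(q)}(c_2^*)=\overline{W}^{(q)}(y)/W^{(q)}(y)$ with $y=c_2^*-c_1^*$, and your Fact~(I) (correctly derived from Remark~\ref{remark_smoothness_zero}(3)) plus the weighted-average step reduce everything to Fact~(II), the strict decrease of $F:=Z^{(q)}/W^{(q)}$. The genuine gap is in your proof of (II). The three ingredients you invoke --- $F'<0$ near the origin, every critical point a local minimum, and $F\to q/\Phi(q)$ at infinity --- do \emph{not} force strict decrease: a function that decreases to an interior minimum and then increases monotonically up to its limit $q/\Phi(q)$ \emph{from below} satisfies all three. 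To exclude that scenario you need the additional lower bound $Z^{(q)}(y)/W^{(q)}(y)> q/\Phi(q)$ for all $y$, i.e.\ that $F$ stays strictly above its limit, which you never establish and which is essentially a fact of the same kind as the monotonicity you are trying to prove. There is also a regularity problem: your second-derivative test differentiates $W^{(q)\prime}/W^{(q)}$, but $W^{(q)}$ need not be $C^2$ (in the bounded variation case not even $C^1$); Remark~\ref{remark_smoothness_zero}(3) gives only monotonicity of $W^{(q)\prime}/W^{(q)}$, not its differentiability, so $F''$ need not exist.

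Both issues disappear if you prove (II) from the paper's own identity \eqref{laplace_in_terms_of_z}: for $0<y<b$,
\begin{align*}
Z^{(q)}(y)-Z^{(q)}(b)\,\frac{W^{(q)}(y)}{W^{(q)}(b)}
=\E_{b-y}\left[ e^{-q\tau_b^+}1_{\{\tau_b^+<\tau_0^-\}}\right]>0,
\end{align*}
where strict positivity holds because $X$, being spectrally positive and not monotone, up-crosses $b$ before passing below $0$ with positive probability from any starting point in $(0,b)$; dividing by $W^{(q)}(y)$ gives exactly $F(y)>F(b)$, with no calculus at all. With that patch your argument closes, and it is a genuinely different route from the paper's: the paper instead compares with the no-transaction-cost solution of \cite{Bayraktar_2012} --- when $\mu\le 0$, \eqref{value_function_no_transaction} gives $\hat v_{a^*}(x)=x$, so $v_{c_1^*,c_2^*}(x)\le x$ while $v_{c_1^*,c_2^*}(0)=G(c_1^*,c_2^*)=0$, and this is incompatible with $v_{c_1^*,c_2^*}'>1$ on $(0,c_1^*)$ (Case 1 in the proof of Lemma~\ref{lemma_derivative_bounded}), forcing $c_1^*=0$. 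The paper's proof is three lines but leans on the external result \eqref{value_function_no_transaction}; your (patched) proof is longer but stays entirely within the scale-function calculus of Sections~\ref{section_c_1_c_2}--\ref{section_candidate}.
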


\begin{proof}
By the nature of the problem the value function $v_{c_1^*,c_2^*}$ is dominated by that of the no-transaction cost case.  By \eqref{value_function_no_transaction}, we must have $v_{c_1^*,c_2^*}(x) \leq x$ for any $x \geq 0$.  Moreover, because $v_{c_1^*,c_2^*}(0) = 0$, $v_{c_1^*,c_2^*}'(0+) < 1$ and hence, in view of the proof of Proposition \ref{lemma_derivative_bounded}, we must have $c_1^* = 0$.
\end{proof}

\begin{figure}[htbp]
\begin{center}
\begin{minipage}{1.0\textwidth}
\centering
\begin{tabular}{cc}
\includegraphics[scale=0.58]{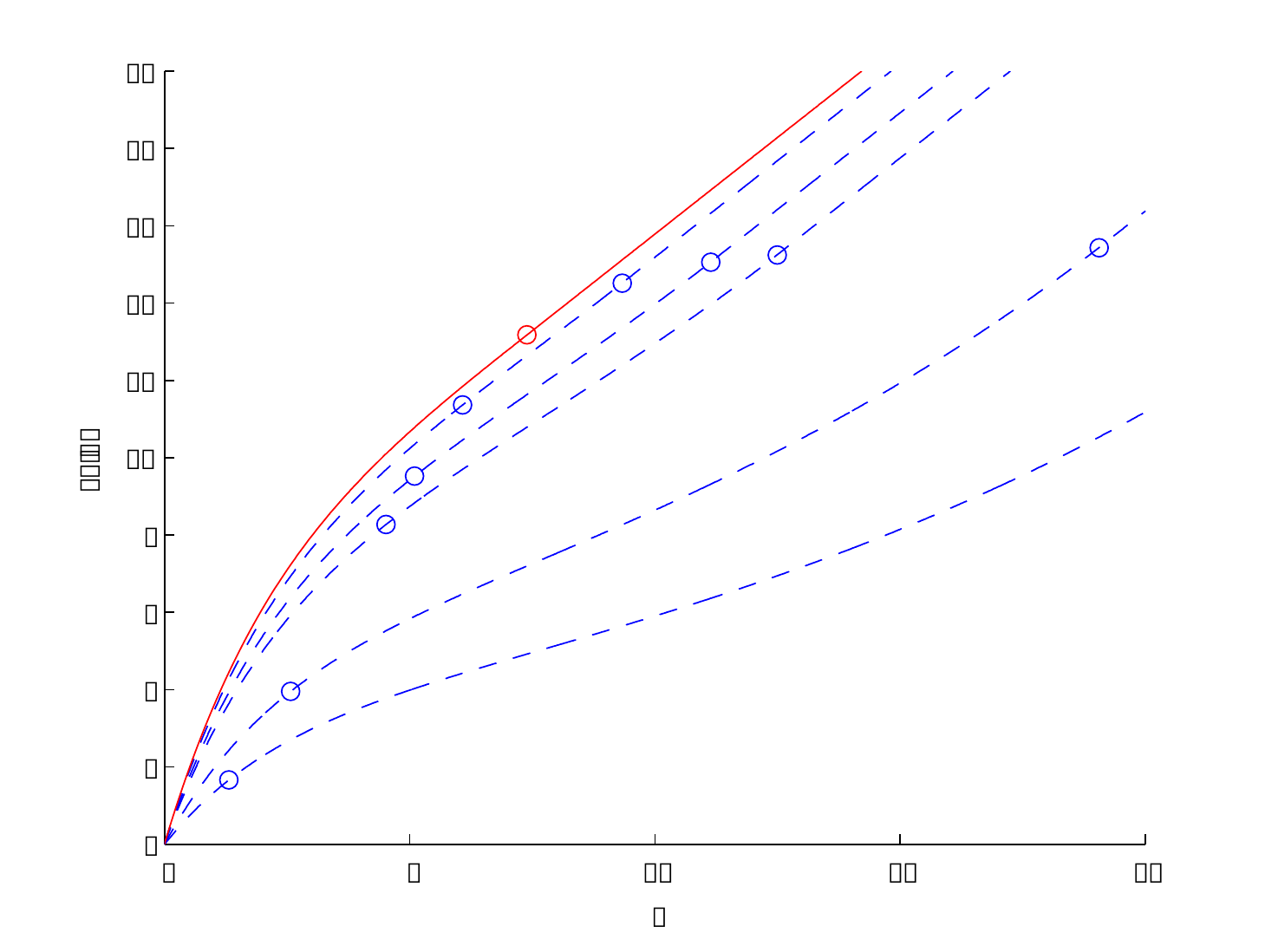}  & \includegraphics[scale=0.58]{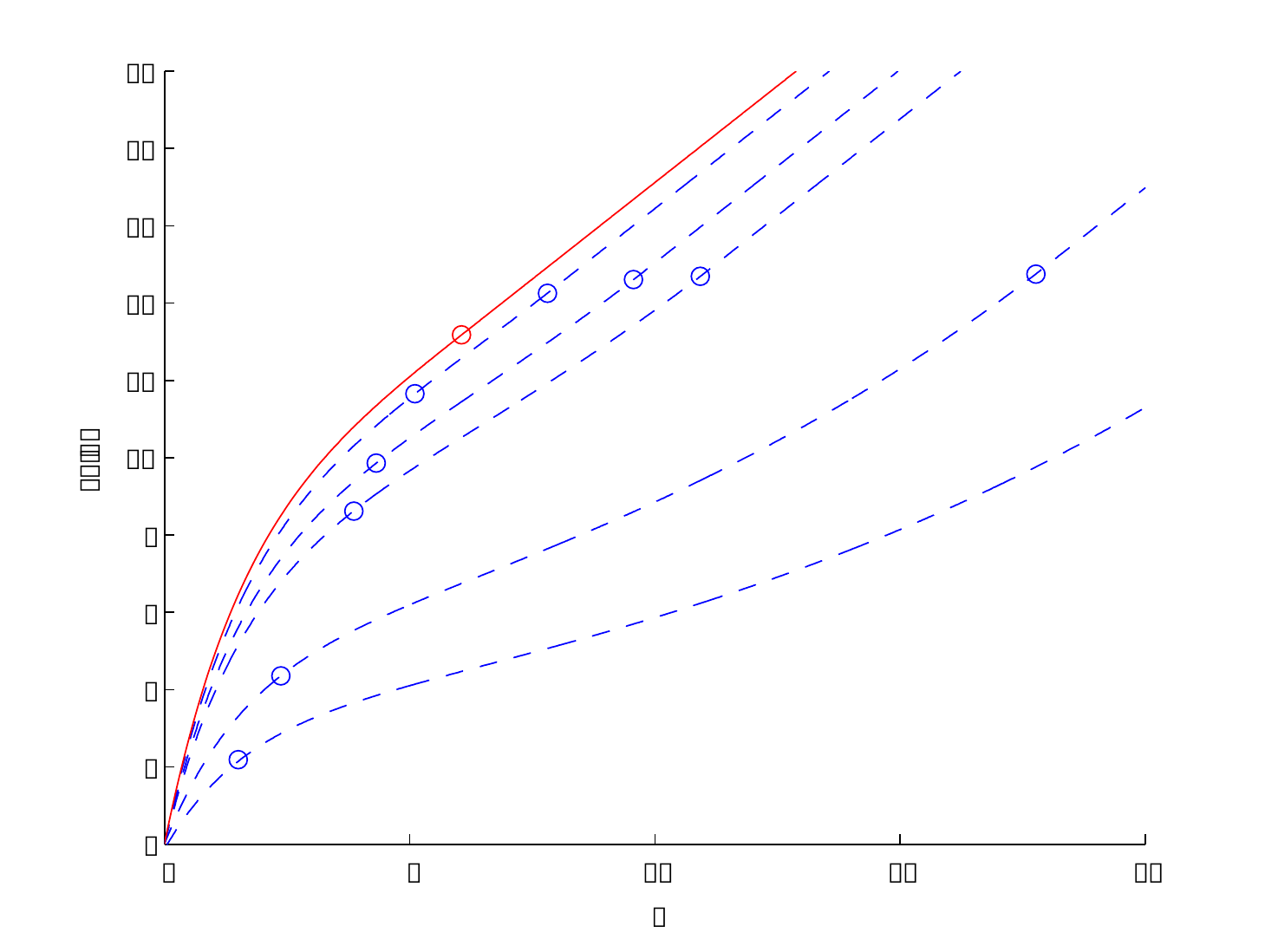} \\
\multicolumn{2}{c}{When $\mu > 0$: (left) $\sigma =1 $ and (right) $\sigma =0$} \vspace{0.3cm} \\
\includegraphics[scale=0.58]{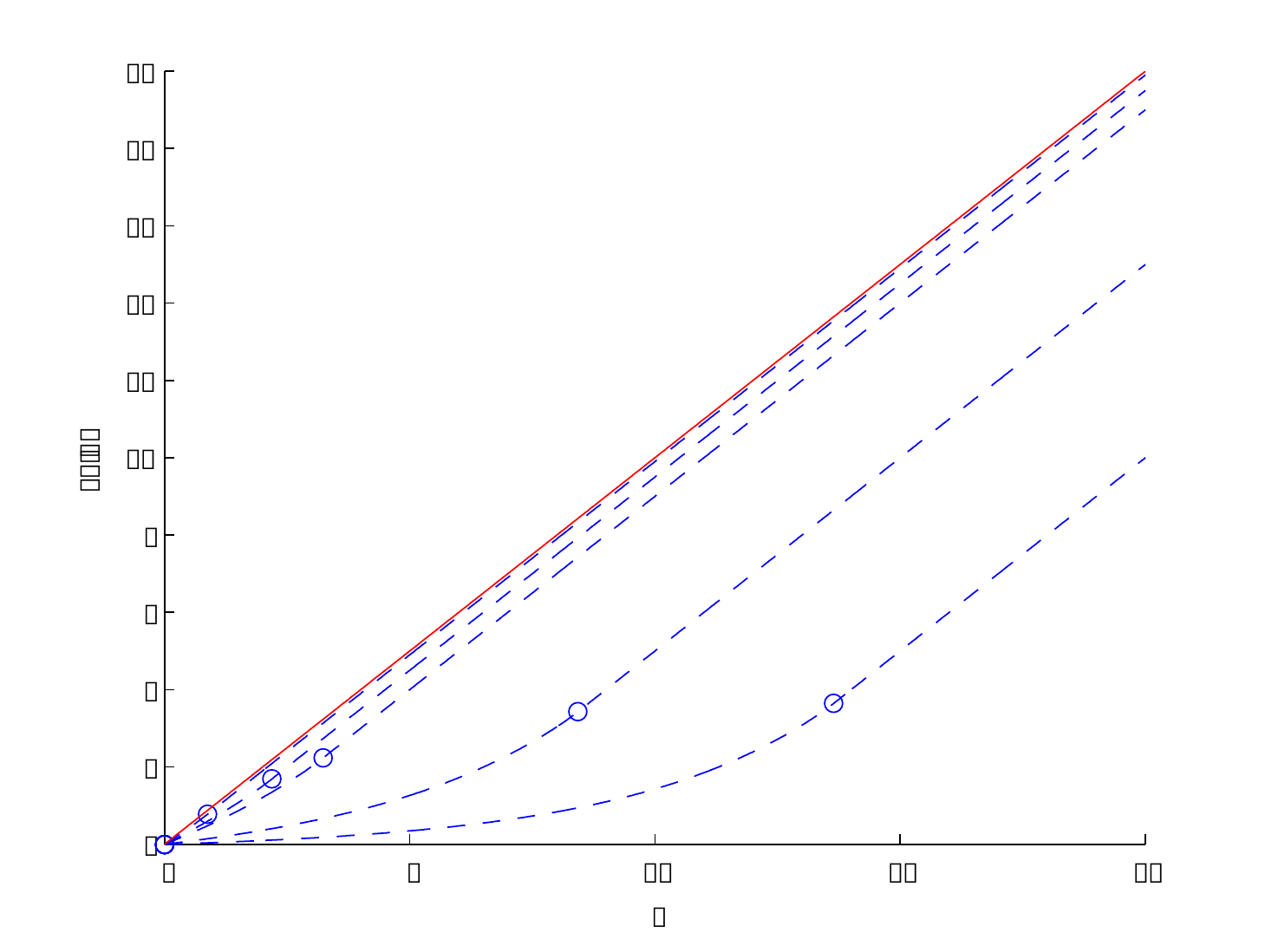}  & \includegraphics[scale=0.58]{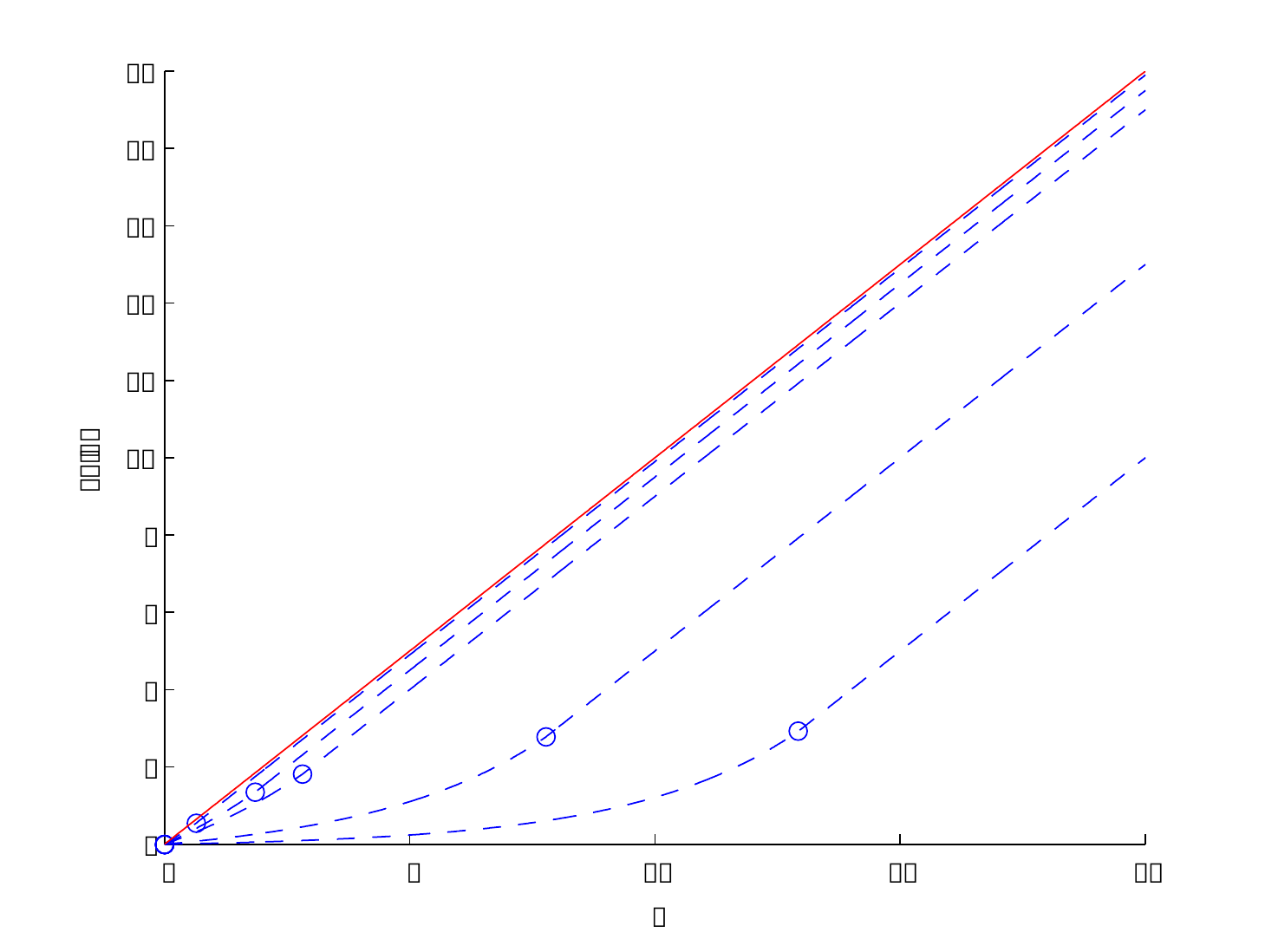} \\
\multicolumn{2}{c}{When $\mu < 0$: (left) $\sigma =1 $ and (right) $\sigma =0$} \vspace{0.3cm} \\
\end{tabular}
\end{minipage}
\caption{Convergence as $\beta \downarrow 0$.} \label{figure_beta}
\end{center}
\end{figure}

\section*{Acknowledgements}
E. Bayraktar is supported in part by the National Science Foundation under a Career grant DMS-0955463 and in part by the Susan M. Smith Professorship. A. Kyprianou would like to thank FIM (Forschungsinstitut f\"ur Mathematik) for supporting him during his sabbatical at ETH, Zurich.
K.\ Yamazaki is in part supported by Grant-in-Aid for Young Scientists
(B) No.\ 22710143, the Ministry of Education, Culture, Sports,
Science and Technology, and by Grant-in-Aid for Scientific Research (B) No.\  2271014, Japan Society for the Promotion of Science. 

\appendix

\bibliographystyle{abbrv}
\bibliographystyle{apalike}

\bibliographystyle{agsm}
\bibliography{dual_model_bib}

\end{document}